\documentclass{amsart}
\pagestyle{plain}
\usepackage{newtxtext}
\usepackage{mathrsfs}
\usepackage[frenchmath]{newtxmath}

\usepackage{mathtools}		% \begin{bmatrix*}[r] for right-align
\usepackage{geometry}
\geometry{paperheight=9.8in, paperwidth=7.5in}
\geometry{hmargin=0.6in, top=0.5in, bottom=0.5in, footskip=.2in}
\usepackage{bm}
\usepackage{microtype}
\usepackage{hyperref}
\hypersetup{colorlinks=true, citecolor=blue, linkcolor=blue}
% mathbb

% \newcommand{\Bbb}{\mathbb B}

\newcommand{\Nbb}{\mathbb N}

\newcommand{\Rbb}{\mathbb R}

\newcommand{\Zbb}{\mathbb Z}

\newcommand{\Fcal}{\mathcal F}

\newcommand{\Kcal}{\mathcal K}
\newcommand{\Lcal}{\mathcal L}
\newcommand{\Mcal}{\mathcal M}
\newcommand{\Ncal}{\mathcal N}

\newcommand{\Pcal}{\mathcal P}
\newcommand{\Qcal}{\mathcal Q}

\newcommand{\Tcal}{\mathcal T}

\newcommand{\Cscr}{\mathscr C}

\newcommand{\Pscr}{\mathscr P}

%frak

% Others
\renewcommand{\to}{\rightarrow}

\newcommand{\dd}{\:\mathrm{d}}

\DeclareMathOperator{\Sdiff}{\triangle}
% Ultrafilter limits

% Large fracs and subs
\def\mmath#1{\text{\scalebox{1.1}{$#1$}}}
\def\mfrac#1#2{\mmath{\frac{#1}{#2}}}
\def\sub#1{_\mmath{#1}}
% Special Variables for this paper
\newcommand{\Linfty}{\Lcal_\infty(G)}
\newcommand{\LinftyS}{\Lcal_\infty^*(G)}
\newcommand{\Lone}{\Lcal_1(G)}
\newcommand{\Cp}{\Cscr^+}

% Latin
\newcommand{\cf}{\textit{cf.}}

\newcommand{\ie}{\textit{i.e.}}

\DeclareMathOperator{\TLIM}{TLIM}
\DeclareMathOperator{\LIM}{LIM}

\DeclarePairedDelimiter{\paren}{(}{)}
\DeclarePairedDelimiter{\set}{\{}{\}}
\DeclarePairedDelimiter{\brac}{[}{]}
\DeclarePairedDelimiter{\norm}{\|}{\|}
\DeclarePairedDelimiter{\net}{\langle}{\rangle}
\DeclarePairedDelimiter{\seq}{\langle}{\rangle_{n=1}^{\infty}}
\DeclarePairedDelimiter{\abs}{|}{|}

\theoremstyle{definition}
\newtheorem{Theorem}{Theorem}[section]
\newtheorem{Lemma}[Theorem]{Lemma}

\newtheorem{Proposition}[Theorem]{Proposition}

\newtheorem{Question}[Theorem]{Question}
\newtheorem{Definition}[Theorem]{Definition}
\newtheorem{point}[Theorem]{}

\begin{document}
\title{When is an invariant mean the limit of a F\o{}lner net?}
\author{John Hopfensperger}
\thanks{The present paper will form part of the author's PhD thesis under the supervision of Ching Chou.}
% \date{August 2020}
\address{\hskip-\parindent
Department of Mathematics, University at Buffalo,
Buffalo, NY 14260-2900, USA}
\email{johnhopf@buffalo.edu}

% Amenable groups
% \sep Invariant means
% \sep F\o{}lner nets
\subjclass[2010]{43A07, 54A20}

\begin{abstract}
Let $G$ be a locally compact amenable group, $\TLIM(G)$
	the topological left-invariant means on $G$,
	and $\TLIM_0(G)$ the limit points of F\o{}lner-nets.
I show that $\TLIM_0(G) = \TLIM(G)$ unless $G$ is $\sigma$-compact non-unimodular,
	in which case $\TLIM_0(G) \neq \TLIM(G)$.
This improves a 1970 result of Chou and a 2009 result of Hindman and Strauss.
I consider the analogous problem for the non-topological left-invariant means,
	and give a short construction of a net converging to invariance ``weakly but not strongly,''
	simplifying the proof of a 2001 result of Rosenblatt and Willis.
\end{abstract}
\maketitle
%%%%%%%%%%%%%%%%%%%%%
%%%%%%%%%%%%%%%%%%%%%
%%%%%%%%%%%%%%%%%%%%%

\section{History}
In this paper, \(G\) is always a locally compact group.
The left Haar measure of \(E\subset G\) is denoted \(|E|\).
The set of means on \(\Linfty\) is \(\Mcal(G) = \{\mu\in\LinftyS : \|\mu\| = 1,\ \mu\geq 0\}\),
	which is endowed with the \(w^*\)-topology to make it compact.
Regarding \(\Lone\) as a subset of \(\LinftyS\),
define \(\Mcal_1(G) = \set*{f\in\Lone : \|f\|_1 = 1,\ f\geq 0}\).

\begin{Proposition}[{\cite[page 1]{greenleaf}}]\label{Prop M1 Dense in M}
\(\Mcal_1(G)\) is dense in \(\Mcal(G)\).
\end{Proposition}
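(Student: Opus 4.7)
The plan is to argue by Hahn–Banach separation. The set \(\Mcal_1(G)\), viewed inside \(\LinftyS\) via the usual pairing \(\nu(f)=\int f\nu\dd|\cdot|\), is clearly convex, since any convex combination of probability densities is again a probability density. If \(\Mcal_1(G)\) were not \(w^*\)-dense in \(\Mcal(G)\), there would exist \(\mu\in\Mcal(G)\) outside the \(w^*\)-closure \(\overline{\Mcal_1(G)}^{w^*}\); this closure is convex and \(w^*\)-closed, so by the Hahn–Banach separation theorem (applied in the locally convex space \((\LinftyS,w^*)\), whose continuous duals are exactly the elements of \(\Linfty\)) there exist a real \(f\in\Linfty\) and \(\alpha\in\Rbb\) such that
\[
\nu(f)\leq\alpha<\mu(f)\qquad\text{for every }\nu\in\Mcal_1(G).
\]

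The second step is to compute the supremum on the left. I claim that
\[
\sup_{\nu\in\Mcal_1(G)}\nu(f)\;=\;\operatorname*{ess\,sup}f.
\]
The inequality ``\(\leq\)'' is immediate from the definition of the essential supremum. For ``\(\geq\)'', fix \(\varepsilon>0\) and let \(E=\{x\in G:f(x)>\operatorname*{ess\,sup}f-\varepsilon\}\); then \(|E|>0\), and because \(G\) is locally compact we can shrink \(E\) to a subset of finite positive measure. The density \(\nu=|E|^{-1}\mathbf{1}_E\) lies in \(\Mcal_1(G)\) and satisfies \(\nu(f)\geq\operatorname*{ess\,sup}f-\varepsilon\).

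The third step observes that any mean \(\mu\in\Mcal(G)\) satisfies \(\mu(f)\leq\operatorname*{ess\,sup}f\): the function \((\operatorname*{ess\,sup}f)-f\) is non-negative almost everywhere, so the positivity of \(\mu\) (together with \(\mu(1)=1\)) gives \(\mu\bigl((\operatorname*{ess\,sup}f)-f\bigr)\geq 0\). Combining the three displayed facts yields
\[
\mu(f)\;\leq\;\operatorname*{ess\,sup}f\;=\;\sup_{\nu\in\Mcal_1(G)}\nu(f)\;\leq\;\alpha,
\]
which contradicts \(\mu(f)>\alpha\) and proves the density.

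There is no real obstacle here; the only point requiring care is the equality \(\sup_{\nu\in\Mcal_1(G)}\nu(f)=\operatorname*{ess\,sup}f\), which hinges on being able to find subsets of strictly positive finite Haar measure inside every set of positive measure. This is guaranteed by local compactness (indeed, by \(\sigma\)-finiteness on compact pieces), so the argument goes through uniformly for all locally compact \(G\).
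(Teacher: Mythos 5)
Your argument is correct, and it is essentially the standard proof found in the cited source (the paper itself gives no proof, deferring to Greenleaf): Hahn--Banach separation in \((\LinftyS, w^*)\) combined with the computation \(\sup_{\nu\in\Mcal_1(G)}\nu(f)=\operatorname{ess\,sup}f\geq\mu(f)\) for every mean \(\mu\). The only point to keep an eye on, which you already flag, is that for non-\(\sigma\)-compact \(G\) one must use the \emph{local} essential supremum (so that \(\set{f>\operatorname{ess\,sup}f-\varepsilon}\) meets some compact set in positive measure), but with that convention the argument goes through.
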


Define left-translation of functions by \(l_x\phi(y) = \phi(x^{-1}y)\).
The set of left-invariant means on \(\Linfty\) is
\begin{center}
\(\LIM(G) =
\set*{\mu\in\Mcal(G) : \paren*{\forall x\in G}\ \paren*{\forall \phi\in\Linfty}\ 
	\mu(\phi) = \mu(l_x \phi)}\).
\end{center}

\(G\) is said to be amenable if \(\LIM(G)\) is nonempty.
Proposition~\ref{Prop M1 Dense in M} shows that every left-invariant mean \(\mu\) is the limit
	of a net \(\net*{f_\alpha}\) in \(\Mcal_1(G)\).
Such a net is said to converge weakly to invariance, because
the net \(\net*{f_\alpha - l_x f_\alpha}\) converges weakly to $0$ for all \(x\in G\).

A mean \(f\in\Mcal_1(G)\) is said to be \((K,\epsilon)\)-invariant if
\(\norm*{f - l_x f}_1 < \epsilon\) for each \(x\in K\).
A net in \(\Mcal_1(G)\) is said to converge strongly to invariance if it is eventually
\((K,\epsilon)\)-invariant for each finite \(K\subset G\) and \(\epsilon > 0\).

\begin{Proposition}[{\cite[Theorem 2.4.2]{greenleaf}}]
If $G$ is amenable, \(\Mcal_1(G)\) contains a net converging strongly to invariance.
\end{Proposition}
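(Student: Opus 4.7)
The plan is to follow the classical Day--Namioka convexity argument, upgrading a weak-to-invariance net to a strong-to-invariance one. Starting from amenability, pick any \(\mu\in\LIM(G)\); Proposition~\ref{Prop M1 Dense in M} produces a net \(\net*{f_\alpha}\) in \(\Mcal_1(G)\) with \(f_\alpha\to\mu\) in \(w^*\). Because left translation is \(w^*\)-continuous on \(\LinftyS\), one has \(f_\alpha - l_x f_\alpha \to \mu - l_x\mu = 0\) in \(w^*\) for every \(x\in G\). Since each difference \(f_\alpha - l_x f_\alpha\) lies in \(\Lone\), the \(w^*\)-convergence against elements of \(\Linfty = \Lone^*\) is exactly weak convergence in \(\Lone\).

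Next I would fix a finite set \(K\subset G\) and \(\epsilon>0\) and pass to norm. Define the continuous linear map
\[
T_K : \Lone \to \Lone^{\abs*{K}}, \qquad T_K(f) = \paren*{f - l_x f}_{x\in K}.
\]
Weak continuity of \(T_K\) gives \(0\in\overline{T_K(\conv\set*{f_\alpha})}^{\,w}\), where \(\conv\) denotes the convex hull inside \(\Mcal_1(G)\) (which is convex and translation-stable). By Mazur's theorem, the weak and norm closures of the convex set \(T_K(\conv\set*{f_\alpha})\) coincide, so there exists a finite convex combination \(g = \sum_i c_i f_{\alpha_i}\in\Mcal_1(G)\) with \(\norm*{g - l_x g}_1 < \epsilon\) for every \(x\in K\), \ie, a \((K,\epsilon)\)-invariant element of \(\Mcal_1(G)\).

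Finally, I would assemble these convex combinations into a net. Direct the index set \(\Lambda\) of pairs \((K,\epsilon)\) by \((K,\epsilon)\leq (K',\epsilon')\) iff \(K\subset K'\) and \(\epsilon'\leq\epsilon\), and for each \((K,\epsilon)\in\Lambda\) choose \(g_{(K,\epsilon)}\in\Mcal_1(G)\) as above. The resulting net is by construction eventually \((K,\epsilon)\)-invariant for every finite \(K\) and every \(\epsilon>0\), so it converges strongly to invariance.

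The one genuine step is the passage from weak to strong invariance: it is the convexity trick that makes the proof work, resting on the fact that in a normed space the weak and norm closures of a convex set agree. Once that is in hand, all remaining ingredients, extracting the weakly convergent net and rearranging the convex combinations into a net indexed by \((K,\epsilon)\), are routine.
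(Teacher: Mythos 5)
Your argument is correct, and it is essentially the proof the paper is pointing to: the paper itself gives no proof, citing Greenleaf's Theorem 2.4.2, which is established by exactly this Day--Namioka argument (weak convergence of \(f_\alpha - l_x f_\alpha\) to \(0\), then Mazur's theorem applied to convex combinations to upgrade to norm convergence, then re-indexing by pairs \((K,\epsilon)\)). No gaps; the only point worth making explicit is that the translation action on \(\LinftyS\) restricts on \(\Lone\) to the usual \(l_x\), so that left-invariance of \(\mu\) really does give \(f_\alpha - l_x f_\alpha \to 0\) weakly in \(\Lone\).
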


Let \(\Cp\) be the compact subsets of \(G\) with positive measure.
For \(A\in\Cp\) define \(\mu\sub{A} = \chi\sub{A} / |A| \in \Mcal_1(G)\).
If \(\mu\sub{A}\) is \((K,\epsilon)\)-invariant, we may also say
	\(A\) is \((K,\epsilon)\)-invariant.
Notice \(\norm*{\mu\sub{A} - l_x \mu\sub{A}}_1 = \abs*{A \Sdiff xA}/{\abs*{A}}\).
Let \(\Mcal_{\Cp}(G) = \set*{\mu\sub{A} : A\in \Cp}\).
% The convex hull of \(\Mcal_{\Cp}(G)\) is dense in \(\Mcal_1(G)\),
% hence it is dense in \(\Mcal(G)\).

\begin{Proposition}[{\cite[Theorem 3.6.1]{greenleaf}}]
If $G$ is amenable, \(\Mcal_{\Cp}(G)\) contains a net \(\net{\mu\sub{A_\alpha}}\) converging strongly to invariance.
In this case, both \(\net{\mu\sub{A_\alpha}}\) and \(\net*{A_\alpha}\) are called F\o{}lner nets.
\end{Proposition}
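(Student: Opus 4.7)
The plan is to extract a Følner set $A$ from the super-level sets of a strongly invariant $L^1$-mean given by the preceding proposition. Fixing a finite $K \subset G$ and $\epsilon > 0$, I would use that result to choose $f \in \Mcal_1(G)$ satisfying $\sum_{x \in K}\norm*{f - l_x f}_1 < \epsilon$. After a routine $L^1$-approximation, I may assume $f$ is continuous with compact support, so that $\set*{f \geq t}$ is compact for every $t > 0$.

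The key tool will be the layer-cake identity: for $f, g \geq 0$ in $\Lone$,
\begin{equation*}
\norm*{f - g}_1 = \int_0^\infty \abs*{\set*{f \geq t} \Sdiff \set*{g \geq t}}\dd t.
\end{equation*}
Taking $g = l_x f$ (so that $\set*{l_x f \geq t} = x\set*{f \geq t}$) and summing over $K$ yields
\begin{equation*}
\int_0^\infty \sum_{x \in K}\abs*{\set*{f \geq t} \Sdiff x\set*{f \geq t}}\dd t < \epsilon = \epsilon\int_0^\infty \abs*{\set*{f \geq t}}\dd t.
\end{equation*}
The integrand on the left is therefore strictly less than $\epsilon$ times that on the right at some $t > 0$. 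At this level, $A = \set*{f \geq t}$ is compact, has positive measure (forced by the strict inequality), and satisfies $\abs*{A \Sdiff xA}/\abs*{A} < \epsilon$ for every $x \in K$. Indexing the construction by pairs $(K, \epsilon)$ under refinement order yields the desired net $\net*{A_\alpha}$ in $\Cp$.

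The main thing to get right is the averaging step selecting the good level $t$: it hinges on the cumulative bound $\sum_{x\in K}\norm*{f - l_x f}_1 < \epsilon$, which is stronger than per-$x$ $\epsilon$-invariance but is easily arranged by drawing $f$ with $\norm*{f - l_x f}_1 < \epsilon/\abs*{K}$. Once that scaling is in place, the rest — the layer-cake identity, the equivariance of super-level sets, and the approximation by continuous compactly supported means — is routine.
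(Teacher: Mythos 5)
Your argument is correct and is exactly the standard Namioka/Emerson--Greenleaf level-set argument: pass from a strongly invariant $f\in\Mcal_1(G)$ (with the $\epsilon/\#K$ scaling) to a continuous compactly supported approximant, apply the layer-cake identity $\norm*{f-l_xf}_1=\int_0^\infty\abs*{\set*{f\geq t}\Sdiff x\set*{f\geq t}}\dd t$, and average over $t$ to select a good compact super-level set. The paper gives no proof of its own here, only the citation to Greenleaf, and your route is essentially the one used in that reference.
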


\begin{Question}
Is it possible to construct a net converging weakly but not strongly to invariance?

The main result of \cite{WeakNotStrong} is that every \(\mu\in\LIM(G)\)
is the limit of some net in \(\Mcal_1(G)\) \textit{not} converging strongly to invariance.
Theorems~\ref{Theorem weak but not strong discrete}
and \ref{Theorem weak but not strong nondiscrete}
give a shorter proof of a slightly stronger result:
Every non-atomic \(\mu\in\Mcal(G)\) is the limit of some net in \(\Mcal_{\Cp}(G)\) not converging strongly to invariance.
\end{Question}

\begin{Question}\label{Question LIM0 = LIM}
Let \(\LIM_0(G)\) be the limit points of F\o{}lner nets.
Does \(\LIM_0(G) = \LIM(G)\)?

Hindman and Strauss asked this question for amenable semigroups in \cite{HindmanDensity},
and answered it affirmatively for the semigroup \((\Nbb, +)\).
The following theorems extend their affirmative result:
Theorem~\ref{Theorem Unimodular Case} when \(G\) is discrete,
Theorem~\ref{Theorem LIM when G is large} when \(G\) is larger than \(\sigma\)-compact, and
Theorem~\ref{Theorem LIM0} when \(G\) is non-discrete but amenable-as-discrete.
The question remains open when \(G\) is compact or \(\sigma\)-compact but not amenable-as-discrete.
\end{Question}

For \(f\in\Mcal_1(G)\), regard \(f*\phi\) as the average
	\(\int_G f(x)\: l_x \phi \dd{x}\) of left-translates of \(\phi\).
The set of topological left-invariant means on \(\Linfty\) is
\begin{center}\(\TLIM(G) = \set*{\mu\in\Mcal(G) : \paren*{\forall f\in\Mcal_1(G)}\ 
\paren*{\forall \phi\in\Linfty}\ \mu(\phi) = \mu(f*\phi)}\).\end{center}

\begin{Proposition}
When $G$ is discrete, \(\LIM(G) = \TLIM(G)\).
\end{Proposition}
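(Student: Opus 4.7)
My plan is to prove the two inclusions $\TLIM(G) \subseteq \LIM(G)$ and $\LIM(G) \subseteq \TLIM(G)$ separately, exploiting the fact that when $G$ is discrete, the Dirac masses $\delta_x$ are elements of $\Mcal_1(G) = \ell_1^+(G)$ of norm one.

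For the inclusion $\TLIM(G) \subseteq \LIM(G)$, the key observation is that for each $x \in G$ one has $\delta_x \in \Mcal_1(G)$, and by a direct computation $\delta_x * \phi = l_x\phi$ for every $\phi \in \Linfty$. Thus if $\mu \in \TLIM(G)$, applying the defining identity of $\TLIM$ with $f = \delta_x$ immediately gives $\mu(\phi) = \mu(l_x\phi)$, so $\mu \in \LIM(G)$.

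For the reverse inclusion, let $\mu \in \LIM(G)$ and fix $f \in \Mcal_1(G)$ and $\phi \in \Linfty$. Writing $f = \sum_{x \in G} f(x)\delta_x$, the partial sums $\sum_{x \in F} f(x)\, l_x\phi$ (over finite $F \subset G$) converge to $f*\phi$ uniformly, because the tail is bounded in $\ell_\infty$-norm by $\|\phi\|_\infty \sum_{x \notin F} f(x)$. Since $\mu$ is $w^*$-continuous on bounded sets and in particular norm-continuous on $\Linfty$, I can pass the sum through $\mu$ to obtain
\[
\mu(f*\phi) = \sum_{x \in G} f(x)\, \mu(l_x\phi) = \sum_{x \in G} f(x)\,\mu(\phi) = \mu(\phi),
\]
using left-invariance of $\mu$ in the middle step and $\|f\|_1 = 1$ at the end. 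This shows $\mu \in \TLIM(G)$.

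No step is really an obstacle; the only subtle point is justifying the interchange of $\mu$ with the possibly infinite sum, which is immediate from norm-continuity of $\mu$ on $\Linfty$ together with uniform convergence of the partial sums. The essential ingredient is that discreteness makes every point mass an element of $\Mcal_1(G)$, so the weaker averaging condition defining $\TLIM$ collapses onto the pointwise translation condition defining $\LIM$.
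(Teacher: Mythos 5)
Your proof is correct and follows essentially the same route as the paper's (two-line) proof: point masses lie in $\Mcal_1(G)$, convolution by a point mass is left-translation, and a general $f\in\Mcal_1(G)$ is a norm-convergent sum of point masses, so the two invariance conditions coincide. The only quibble is your appeal to ``$w^*$-continuity on bounded sets,'' which is neither needed nor true in general; all you need is that $\mu$, being a bounded linear functional, is norm-continuous on $\Linfty$, which together with the uniform convergence of the partial sums justifies interchanging $\mu$ with the sum.
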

\begin{proof}
When $G$ is discrete, every $f\in\Mcal_1(G)$ is a sum of point-masses.
Convolution by a point-mass is equivalent to left-translation.
\end{proof}

A net in \(\Mcal_1(G)\) is said to converge strongly to topological invariance if it is eventually
	\((K,\epsilon)\)-invariant for each compact \(K\subset G\) and \(\epsilon > 0\).

\begin{Proposition}[{\cite[Corollary 2.4.4]{greenleaf}}]
	If \(\net*{f_\alpha}\) is a net in \(\Mcal_1(G)\) converging strongly to topological invariance, and \(f_\alpha \to \mu\), then \(\mu\in\TLIM(G)\).
\end{Proposition}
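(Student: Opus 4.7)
The plan is to reduce convolution-invariance to the given translation-invariance by expressing $g*\phi$ as a $g$-weighted average of left-translates of $\phi$, and bounding the resulting deficit using the $(K,\epsilon)$-invariance hypothesis.

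First, I would establish via Fubini and the change of variable $y \mapsto xz$ (using left-invariance of Haar measure) the identity
\[
\inner*{f,\, g*\phi} \;=\; \int_G g(x)\,\inner*{l_{x^{-1}}f,\,\phi}\dd x
\qquad (f,g \in \Mcal_1(G),\ \phi\in\Linfty).
\]
Since $\int g = 1$, subtracting the identity $\inner{f,\phi} = \int g(x)\inner{f,\phi}\dd x$ rearranges this to
\[
\abs*{\inner*{f,\,g*\phi - \phi}} \;\leq\; \norm*{\phi}_\infty \int_G g(x)\,\norm*{l_{x^{-1}}f - f}_1\dd x.
\]

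Next, fix $g\in\Mcal_1(G)$ with compact support $K$. Since $K^{-1}$ is also compact, given $\epsilon > 0$ the hypothesis provides $\alpha_0$ so that $f_\alpha$ is $(K^{-1},\,\epsilon/\norm{\phi}_\infty)$-invariant for all $\alpha\geq\alpha_0$; the displayed bound then yields $\abs{\inner{f_\alpha,\,g*\phi-\phi}} \leq \epsilon$. Passing to the $w^*$-limit $f_\alpha \to \mu$ gives $\abs{\mu(g*\phi) - \mu(\phi)} \leq \epsilon$, and since $\epsilon$ was arbitrary, $\mu(g*\phi) = \mu(\phi)$.

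Finally, I would handle general $g\in\Mcal_1(G)$ by approximation: truncate and renormalize to obtain compactly supported $g_n\in\Mcal_1(G)$ with $\norm{g-g_n}_1\to 0$. Young's inequality gives $\norm{g*\phi - g_n*\phi}_\infty \leq \norm{g-g_n}_1\norm{\phi}_\infty\to 0$, so $\mu(g_n*\phi)\to\mu(g*\phi)$; combined with the previous step this yields $\mu(g*\phi) = \mu(\phi)$, hence $\mu\in\TLIM(G)$.

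The only real bookkeeping is keeping translation directions straight: the hypothesis controls $\norm{l_x f_\alpha - f_\alpha}_1$ for $x$ in a compact set, and one must note that bounding $\norm{l_{x^{-1}}f_\alpha - f_\alpha}_1$ for $x\in K$ is the same as $(K^{-1},\epsilon)$-invariance, which is still available because $K^{-1}$ is compact. Everything else is standard Fubini and $L^1$--$L^\infty$ duality.
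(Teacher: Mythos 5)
Your argument is correct, and it is the standard proof of this fact: the paper itself states this Proposition without proof, citing Greenleaf's Corollary 2.4.4, and your derivation (Fubini plus the substitution \(y\mapsto xz\) to write \(\inner*{f,g*\phi}=\int_G g(x)\inner*{l_{x^{-1}}f,\phi}\dd x\), the resulting bound by \(\norm*{\phi}_\infty\sup_{x\in K}\norm*{l_{x^{-1}}f_\alpha-f_\alpha}_1\), and the density of compactly supported elements of \(\Mcal_1(G)\)) is essentially the argument found there. The bookkeeping you flag --- that controlling \(\norm*{l_{x^{-1}}f_\alpha-f_\alpha}_1\) for \(x\in K\) is exactly \((K^{-1},\epsilon)\)-invariance, available since \(K^{-1}\) is compact --- is handled correctly.
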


\begin{Proposition}[{\cite[Proposition 3.6.2]{greenleaf}}]
If $G$ is amenable, \(\Mcal_{\Cp}(G)\) contains a net \(\net*{\mu\sub{A_\alpha}}\)
	converging strongly to topological invariance.
In this case, both \(\net*{\mu\sub{A_\alpha}}\) and \(\net*{A_\alpha}\)
	are called topological F\o{}lner nets.
\end{Proposition}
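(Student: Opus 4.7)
My plan is to exhibit, for each compact $K \subseteq G$ and $\epsilon > 0$, a single $A \in \Cp$ with $\sup_{x \in K}\abs*{A \Sdiff xA} < \epsilon\abs*{A}$; the desired topological F\o{}lner net is then indexed by pairs $(K,\epsilon)$ ordered by $K \subseteq K'$ and $\epsilon \geq \epsilon'$.

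First I would establish Reiter's condition for compact sets in $\Mcal_1(G)$: for every compact $K$ and $\epsilon > 0$, there is $f \in \Mcal_1(G)$ with $\sup_{x \in K}\norm*{l_x f - f}_1 < \epsilon$. Starting from amenability, take $\mu \in \LIM(G)$ approximated weak-$*$ by $\net*{f_\alpha} \subseteq \Mcal_1(G)$ via Proposition~\ref{Prop M1 Dense in M}; left-invariance of $\mu$ gives $f_\alpha - l_y f_\alpha \to 0$ weakly in $L^1(G)$ for each $y$. Consider the linear map $T : L^1(G) \to C(K, L^1(G))$ defined by $Tf(y) := f - l_y f$. A dominated convergence argument promotes the pointwise-weak convergence $Tf_\alpha(y) \to 0$ to weak convergence $Tf_\alpha \to 0$ in $C(K, L^1(G))$; since $T(\Mcal_1(G))$ is convex, Mazur's theorem places $0$ in its norm closure, producing the required $f$.

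Second, I would apply Namioka's layer-cake identity
\[
\norm*{f - l_x f}_1 = \int_0^\infty \abs*{\set*{f > t} \Sdiff x\set*{f > t}} \dd{t}, \qquad \norm*{f}_1 = \int_0^\infty \abs*{\set*{f > t}} \dd{t}.
\]
Taking a finite $F \subseteq K$ of size $m$ with $\norm*{l_y f - f}_1 < \epsilon/(2m)$ for $y \in F$, a pigeonhole in $t$ yields some $t_0 > 0$ such that $A := \set*{f > t_0} \in \Cp$ satisfies $\abs*{A \Sdiff yA} < \epsilon\abs*{A}/2$ for every $y \in F$; extending this bound from $F$ to all of $K$ uses uniform continuity of $x \mapsto \abs*{A \cap xA} = (\chi\sub{A} * \check\chi\sub{A})(x) \in C_0(G)$.

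The main obstacle is this last extension: the density of $F$ in $K$ required by the continuity argument depends on the modulus of continuity of $\chi\sub{A} * \check\chi\sub{A}$, which itself depends on $A$, selected only after $F$. I break the circle by fixing a compact symmetric neighborhood $V$ of $e$ in advance, applying the first step to the enlarged compact set $K \cup V$ with a parameter $\delta$ much smaller than $\epsilon$, and demanding that the Namioka cut-off produces $A$ satisfying $\abs*{A \Sdiff vA} < \delta\abs*{A}$ for $v$ in a finite $V$-net as well; this bounds the modulus of continuity of $\chi\sub{A} * \check\chi\sub{A}$ on each $V$-translate in terms of $V$ alone, so a $V$-net in $K$ suffices for the extension.
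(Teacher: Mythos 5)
Your architecture---Reiter's condition over compact sets, Namioka's layer-cake identity to pass to level sets, inner regularity to make them compact---is the classical route to Greenleaf's Proposition 3.6.2, which the paper itself only cites without proof. But the one genuinely hard point, upgrading control over \emph{finitely many} translates to control over a \emph{compact set} of translates, occurs twice in your sketch and is not correctly resolved either time. In the first step, the claim that ``a dominated convergence argument'' promotes the pointwise-weak convergence $Tf_\alpha(y)\to 0$ to weak convergence of $Tf_\alpha$ in $C(K,\Lone)$ does not hold as stated: $\net*{f_\alpha}$ is a net, not a sequence, so dominated convergence gives nothing; and the dual of $C(K,\Lone)$ consists of $\Linfty$-valued vector measures on $K$, against which pointwise-in-$y$ weak convergence is insufficient even for sequences. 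What Mazur legitimately yields from the weak-$*$ approximation of a left-invariant mean is only the finite-set Reiter property: for each finite $F\subset G$ there is $f\in\Mcal_1(G)$ with $\norm*{f-l_yf}_1<\epsilon$ for $y\in F$.

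In the last step the same problem recurs and your proposed fix is circular: knowing $\abs*{A\Sdiff v_iA}<\delta\abs*{A}$ for $v_i$ in a finite net inside $V$ tells you nothing about $\abs*{A\Sdiff vA}$ for the remaining $v\in V$ unless you already have a modulus of continuity for $v\mapsto\abs*{A\cap vA}$ that is uniform in $A$---which is exactly the circle you set out to break. The standard repair is to smooth \emph{before} cutting: fix $g\in\Mcal_1(G)$ continuous with compact support and replace $f$ by $g*f$. Since $l_x(g*f)=(l_xg)*f$ and hence $\norm*{l_x(g*f)-l_y(g*f)}_1\le\norm*{l_xg-l_yg}_1$, the maps $x\mapsto l_x(g*f)$ are equicontinuous on $K$ \emph{uniformly in} $f$, so the finite-set Reiter property on the convex set $g*\Mcal_1(G)$ (where $0$ remains in the relevant weak closures because a topological left-invariant mean exists) upgrades to all of $K$ with a net whose mesh depends only on $g$ and $K$. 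For the level sets one then works with the integrated condition $\int_K\abs*{A\Sdiff xA}\dd{x}<\delta\abs*{K}\,\abs*{A}$ and a doubling argument (write $x=y\cdot(y^{-1}x)$ with both factors outside the small exceptional set), rather than with pointwise control at a finite net. Without some such device, both of your finite-to-compact passages fail.
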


For example, \(\seq*{[-n, n]}\) is a topological F\o{}lner sequence for \(\Rbb\).
By \cite[Theorem 3]{EmersionRatio}, there is no such thing as a non-topological F\o{}lner sequence.

\begin{Question}\label{Question non topological Folner net}
If $G$ is amenable and non-discrete, is it possible to construct a non-topological F\o{}lner net?

When $G$ is amenable-as-discrete, there exists \(\mu \in \LIM(G) \setminus \TLIM(G)\),
\cf\ \cite{Rosenblatt76}.
In this case, Theorem~\ref{Theorem LIM0} yields
a net \(\net*{\mu\sub{X_\alpha S_\alpha}}\) converging to \(\mu\),
which is clearly a non-topological F\o{}lner net.
Otherwise the question remains open.
\end{Question}

\begin{Question}
Let \(\TLIM_0(G)\) be the limit points of topological F\o{}lner nets.
Does \(\TLIM_0(G) = \TLIM(G)\)?

A partial answer was given by Chou, who showed in \cite{Chou70} that
	\(\TLIM(G)\) is the closed convex hull of \(\TLIM_0(G)\).
The main result of the present paper is to answer this question completely.
If $G$ is $\sigma$-compact non-unimodular,
	the answer is negative by Theorem~\ref{Theorem Non-Unimodular}.
In all other cases the answer is affirmative:
by Proposition~\ref{Prop When G is compact} when \(G\) is compact,
by Theorem~\ref{Theorem Unimodular Case} when \(G\) is unimodular,
and by Theorem~\ref{Theorem when G is large} when \(G\) is larger than \(\sigma\)-compact.
\end{Question}

%%%%%%%%%%%%%%%%%%%%%
%%%%%%%%%%%%%%%%%%%%%
%%%%%%%%%%%%%%%%%%%%%

\section{Converging to invariance weakly but not strongly}
A neighborhood basis about $\mu\in\Mcal(G)$ is given by sets of the form
\begin{center}
\(\Ncal(\mu, \Fcal, \epsilon) =
\set*{ \nu\in\Mcal(G) :
	\paren*{\forall f\in\Fcal}\
	\abs*{\mu(f) - \nu(f)} < \epsilon
	}\)
\end{center}
where $\Fcal$ ranges over finite subsets of $\Linfty$ and $\epsilon$ ranges over $(0, 1)$.

\begin{Lemma}\label{Lemma Partition Basis}
Regard each $\mu\in\Mcal$ as a finitely additive measure via \(\mu(E) = \mu(\chi_E)\).
Then a neighborhood basis about $\mu\in\Mcal$ is given by sets of the form
\begin{center}
\(\Ncal(\mu, \Pcal, \epsilon) = \set*{
	\nu\in\Mcal :
	\paren*{\forall E\in\Pcal}\
	|\mu(E) - \nu(E)| < \epsilon}\)
\end{center}
where $\epsilon$ ranges over $(0,1)$ and \(\Pcal\) ranges over finite measurable partitions of $G$.
\end{Lemma}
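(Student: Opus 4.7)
The plan is to show that the two neighborhood systems refine each other. One direction is immediate: if $\Pcal = \{E_1,\ldots,E_n\}$ is a finite measurable partition of $G$ and $\Fcal = \{\chi_{E_1},\ldots,\chi_{E_n}\}$, then by definition $\Ncal(\mu,\Fcal,\epsilon) = \Ncal(\mu,\Pcal,\epsilon)$, so each partition-neighborhood is $w^*$-open. The real content is the other direction: given a finite $\Fcal \subset \Linfty$ and $\epsilon > 0$, produce a finite measurable partition $\Pcal$ and $\delta > 0$ with $\Ncal(\mu,\Pcal,\delta) \subset \Ncal(\mu,\Fcal,\epsilon)$.

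For this, I would approximate each $\phi \in \Fcal$ in sup-norm by a simple function adapted to a common partition. Write $\Fcal = \{\phi_1,\ldots,\phi_k\}$, and assume (modifying on a null set, which does not affect the equivalence class in $\Linfty$) that $\|\phi_i\|_\infty \leq M$ for all $i$. Fix $\eta > 0$, partition $[-M,M]$ into intervals of length $\eta$, and for each $i$ let $\Pcal_i$ be the (finite) measurable partition of $G$ consisting of the preimages of these intervals under $\phi_i$. Let $\Pcal$ be the common refinement, still a finite measurable partition. Pick constants $c_{i,E}$ for $E \in \Pcal$ so that $\psi_i := \sum_{E \in \Pcal} c_{i,E}\chi_E$ satisfies $\|\phi_i - \psi_i\|_\infty \leq \eta$ and $|c_{i,E}| \leq M$.

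Since every $\nu \in \Mcal$ has norm $1$, the estimate $|\nu(\phi_i) - \nu(\psi_i)| \leq \eta$ holds for $\nu = \mu$ and for any $\nu \in \Mcal$. On the other hand, by linearity
\[
|\mu(\psi_i) - \nu(\psi_i)| \leq \sum_{E \in \Pcal} |c_{i,E}|\,|\mu(E) - \nu(E)| \leq M|\Pcal|\,\max_{E \in \Pcal}|\mu(E) - \nu(E)|.
\]
Thus, choosing $\eta = \epsilon/3$ and then $\delta = \epsilon/(3M|\Pcal|)$, the triangle inequality gives $|\mu(\phi_i) - \nu(\phi_i)| < \epsilon$ for all $i$ whenever $\nu \in \Ncal(\mu,\Pcal,\delta)$, which is exactly the desired inclusion.

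There is no real obstacle here; the argument is a routine application of simple-function approximation in sup-norm combined with the fact that means have norm one. The only place one must be a little careful is in legitimately passing from ``essentially bounded'' to ``bounded'' so that the level sets $\phi_i^{-1}([j\eta,(j+1)\eta))$ genuinely form a finite partition of $G$.
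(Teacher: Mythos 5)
Your proof is correct and follows essentially the same route as the paper: reduce to simple functions by sup-norm approximation (using that means have norm one), take the common refining partition on which they are constant, and bound $|\mu(\psi_i)-\nu(\psi_i)|$ by $M\cdot\#\Pcal\cdot\max_E|\mu(E)-\nu(E)|$. The paper merely compresses your approximation step into the phrase ``for simplicity, suppose $\Fcal$ consists of simple functions'' and takes $\Pcal$ to be the atoms of the measure algebra they generate, which is your common refinement.
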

\begin{proof}
Pick $\Ncal(\mu, \Fcal, \epsilon)$.
For simplicity, suppose $\Fcal$ consists of simple functions.
Let $M = \max\set*{\|f\|_\infty : f\in \Fcal}$.
Let $\Pcal$ be the atoms of the measure algebra generated by $\Fcal$.
Then $\Ncal\big( \mu, \Pcal, \mfrac{\epsilon}{\#\Pcal\cdot M} \big)
	\subset \Ncal(\mu, \Fcal, \epsilon)$.
\end{proof}

\begin{Lemma}\label{Lemma if E is infinite}
Let $E\subset G$ be any infinite subset, $n\in\Nbb$, and $x\in G\setminus \{e\}$.
Then there exists $S\subset E$ with $\#S = n$ and $S \cap xS = \varnothing$.
\end{Lemma}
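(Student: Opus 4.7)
The plan is to build $S$ one element at a time by a greedy choice, using only that $E$ is infinite and $x \neq e$.

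Enumerate the desired elements $s_1, s_2, \ldots, s_n$ inductively. Suppose we have already chosen distinct $s_1, \ldots, s_k \in E$ (with $k < n$) satisfying $\{s_1, \ldots, s_k\} \cap x\{s_1, \ldots, s_k\} = \varnothing$. The forbidden set for $s_{k+1}$ is
\[
F_k \;=\; \{s_1,\ldots,s_k\} \,\cup\, x\{s_1,\ldots,s_k\} \,\cup\, x^{-1}\{s_1,\ldots,s_k\},
\]
which has at most $3k$ elements. Since $E$ is infinite, we may pick $s_{k+1} \in E \setminus F_k$.

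It remains to check that $S = \{s_1, \ldots, s_n\}$ satisfies $S \cap xS = \varnothing$. Suppose for contradiction that $s_i = x s_j$ for some $i, j \le n$. The case $i = j$ forces $x = e$, contradicting the hypothesis. If $i < j$, then $s_j = x^{-1} s_i \in x^{-1}\{s_1,\ldots,s_{j-1}\} \subset F_{j-1}$, contradicting our choice of $s_j$. If $i > j$, then $s_i = x s_j \in x\{s_1,\ldots,s_{i-1}\} \subset F_{i-1}$, contradicting our choice of $s_i$. In all cases we reach a contradiction, so $S \cap xS = \varnothing$, and $\#S = n$ because the elements were chosen distinct at each step.

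There is no real obstacle in this argument; the only subtle point is to remember to exclude $x^{-1}\{s_1,\ldots,s_k\}$ as well as $x\{s_1,\ldots,s_k\}$, so as to prevent future choices from landing as $x$-predecessors of earlier ones.
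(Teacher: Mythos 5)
Your proof is correct and follows essentially the same inductive/greedy construction as the paper, which at each step picks a new element of $E$ outside $\{x,x^{-1}\}$ applied to the elements already chosen. Your version is if anything slightly more careful, since you also explicitly exclude the previously chosen elements themselves to guarantee $\#S = n$.
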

\begin{proof}
If $n=0$, take \(S = \varnothing\).
Inductively, suppose there exists $R \subset E$ with $\# R = n-1$ and $R \cap xR = \varnothing$.
Pick any $y\in E \setminus \set{x, x^{-1}}R$, and let $S = R \cup \{y\}$.
\end{proof}

\begin{Theorem}\label{Theorem weak but not strong discrete}
Suppose $G$ is discrete, $\mu\in\Mcal$ vanishes on finite sets, and $x\in G\setminus \{e\}$.
Then there exists a net \(\net*{S_\Pcal}\) in \(\Cp\) so \(\mu\sub{S_\Pcal} \to \mu\),
	but \(S_\Pcal \cap x S_\Pcal = \varnothing\) for all \(\Pcal\).
\end{Theorem}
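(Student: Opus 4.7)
The plan is to apply Lemma~\ref{Lemma Partition Basis} to reduce the $w^*$-convergence $\mu\sub{S_\Pcal} \to \mu$ to matching $\mu$-values on the pieces of a finite measurable partition, and then to build each $S_\Pcal$ piece by piece via repeated applications of Lemma~\ref{Lemma if E is infinite}. I will index the net by pairs $(\Pcal, n)$, where $\Pcal = \set*{E_1, \ldots, E_k}$ is a finite measurable partition of $G$ and $n \in \Nbb$, directed by refinement of $\Pcal$ together with magnitude of $n$.

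Fix $(\Pcal, n)$. Because $\mu$ vanishes on finite sets, any $E_i$ with $\mu(E_i) > 0$ must be infinite; the pieces with $\mu(E_i) = 0$ contribute nothing to the approximation and may be discarded. Set $n_i = \lfloor n\,\mu(E_i) \rfloor$ and construct $S = S_1 \cup \cdots \cup S_k$ by induction on $i$. Having chosen $S_1, \ldots, S_{i-1}$, let $T = \bigcup_{j < i} S_j$, a finite set. Since $E_i$ is infinite and $xT \cup x^{-1}T$ is finite, $E_i \setminus (xT \cup x^{-1}T)$ remains infinite, so Lemma~\ref{Lemma if E is infinite} produces $S_i$ inside that set with $\#S_i = n_i$ and $S_i \cap xS_i = \varnothing$.

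To verify $S \cap xS = \varnothing$, suppose $y = xz$ with $y \in S_i$ and $z \in S_j$. If $i = j$ this contradicts $S_i \cap xS_i = \varnothing$; if $j < i$ then $y \in xS_j \subset xT$, contradicting the choice of $S_i$; and if $i < j$ then $z \in x^{-1}S_i \subset x^{-1}T$, contradicting the choice of $S_j$. As $n \to \infty$ the ratios $n_i / \sum_j n_j$ approach $\mu(E_i)$ uniformly in $i$, and refining $\Pcal$ eventually lands inside any prescribed partition-neighborhood of $\mu$; by Lemma~\ref{Lemma Partition Basis} this gives $\mu\sub{S} \to \mu$.

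The delicate point is managing the cross-interactions between partition pieces: Lemma~\ref{Lemma if E is infinite} only yields $S_i \cap xS_i = \varnothing$ on a single infinite set, whereas the global identity $S \cap xS = \varnothing$ additionally requires $S_i \cap xS_j = \varnothing$ for $i \neq j$. Subtracting the finite set $xT \cup x^{-1}T$ from $E_i$ at each inductive step preserves infiniteness---here is where the hypothesis that $\mu$ vanishes on finite sets is used, via each positive-measure $E_i$ being infinite---and is the one adjustment that lets the lemma carry across the union.
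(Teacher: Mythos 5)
Your inductive construction of the sets $S_i$ --- removing $\set*{x,x^{-1}}\bigl(S_1\cup\hdots\cup S_{i-1}\bigr)$ from $E_i$ before invoking Lemma~\ref{Lemma if E is infinite}, and the three-case check that this kills the cross terms $S_i\cap xS_j$ --- is exactly the paper's argument and is correct. The gap is in the convergence step, and it comes from decoupling the integer $n$ from the partition $\Pcal$ in a product-ordered index set. Convergence of a net indexed by pairs $(\Pcal,n)$ requires that \emph{every} index above some $(\Pcal_0,n_0)$ land in the target neighborhood, and that includes pairs where $\Pcal$ is vastly finer than $n_0$ can resolve. Concretely, take $G=\Zbb$ and $\mu$ an invariant mean extending natural density, so that for every $k$ the congruence classes mod $k$ partition $G$ into $k$ sets of measure $\mfrac1k$. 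Above any $(\Pcal_0,n_0)$ sits the index $(\Pcal,n_0)$ where $\Pcal$ refines $\Pcal_0$ by the classes mod $k$ with $k>n_0$: then every piece has $\mu(E_i)<\mfrac{1}{n_0}$, so every $n_i=\lfloor n_0\,\mu(E_i)\rfloor$ is $0$ and $S=\varnothing\notin\Cp$. Even away from this degenerate case, each floor loses up to $1$, so the total discrepancy $\sum_i\abs*{\mu(E_i)-n_i/N}$ is of order $\#\Pcal/n$, which is not small on a cofinal set of indices; your statement that the ratios converge ``uniformly in $i$'' is true only for fixed $\Pcal$, and that is not the quantifier order the net demands.

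The repair is to tie the accuracy of the integer approximation to the fineness of $\Pcal$ and index by $\Pcal$ alone, which is what the paper does: for $\Pcal$ with $p$ pieces it chooses integers $n_i$ with $\abs*{\mfrac{n_i}{2p^2}-\mu(E_i)}<\mfrac{1}{2p^2}$, so that summing over $i$ gives $\sum_i\abs*{\mu(E_i)-\mfrac{n_i}{N}}<\mfrac1p$. Since any basic neighborhood $\Ncal(\mu,\Qcal,\epsilon)$ is eventually dominated by partitions $\Pcal$ refining $\Qcal$ with $p>1/\epsilon$ pieces, and each $F\in\Qcal$ is a union of pieces of such a $\Pcal$, the summed bound $\mfrac1p$ controls the error on $F$ and Lemma~\ref{Lemma Partition Basis} applies. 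If you prefer to keep your $\lfloor n\,\mu(E_i)\rfloor$ formulation, you must at least restrict to indices with $n\geq(\#\Pcal)^2$ (or make $n$ a function of $\Pcal$); as written the net neither is well defined everywhere nor converges.
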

\begin{proof}
Let $\Pscr$ be the directed set of all finite partitions of $G$, ordered by refinement.
Pick \(\Pcal = \set*{E_1, \hdots, E_p} \in \Pscr\).
For $1\leq i\leq p$, we will choose $S_i\subset E_i$ such that
\(\abs*{\mfrac{\# S_i}{\# S_1 + \hdots + \# S_p} - \mu(E_i)} < \mfrac1p\),
	and take \(S_\Pcal = S_1 \cup \hdots \cup S_p\).
Then \(\abs*{\mu\sub{S_\Pcal}(E_i) - \mu(E_i)} < \mfrac1p\),
	and \(\mu\sub{S_\Pcal}\to\mu\) by Lemma~\ref{Lemma Partition Basis}.

We begin by establishing the values \(n_i = \# S_i\).
If $\mu(E_i) = 0$, let \(n_i = 0\).
Otherwise $\mu(E_i) > 0$, hence $E_i$ is infinite.
In this case, let $n_i\geq 0$ be an integer such that
	\(\abs*{\mfrac{n_i}{2p^2} - \mu(E_i)} < \mfrac{1}{2p^2}\).
Let \(N = \sum_{i=1}^p n_i\).
Now \(\abs*{\mfrac{n_i}{2p^2} - \mfrac{n_i}{N}}
	= \mfrac{n_i}{N} \cdot \abs*{\mfrac{N}{2p^2} - 1}
	\leq \abs*{\mfrac{N}{2p^2} - 1}
	= \abs*{\sum_i \mfrac{n_i}{2p^2} - \mu(E_i)}
	< \mfrac{1}{2p}\),
so \(\abs*{\mu(E_i) - \mfrac{n_i}{N}} < \mfrac1p\).

Apply Lemma~\ref{Lemma if E is infinite} to choose \(S_1 \subset E_1\)
	with \(S_1 \cap x S_1 = \varnothing\).
For \(k <p\), inductively choose
	\(S_{k+1} \subset E_{k+1} \setminus \set{x, x^{-1}} (S_1 \cup \hdots \cup S_k)\)
	with \(\# S_{k+1} = n_k\) and \(S_{k+1} \cap x S_{k+1} = \varnothing\).
Let $S_\Pcal = S_1 \cup \hdots \cup S_p$.
Now $\mu\sub{S_\Pcal}(E_i) = \mfrac{n_i}{N}$ and $S_\Pcal \cap xS_\Pcal = \varnothing$, as desired.
\end{proof}

\begin{point}
The hypothesis ``\(\mu\) vanishes on finite sets'' is necessary in Theorem~\ref{Theorem weak but not strong discrete}.
For example, pick \(x, y\in G\) and define \(\mu\in\Mcal\) by
	\(\mu(\{x\}) = \frac23\) and \(\mu(\{y\}) = \frac13\).
For each \(F\in\Cp\), \(\mu\sub{F}(\{x\}) \in \set*{1, \frac12, \frac13, \hdots}\),
hence \(\abs*{\mu(\{x\}) - \mu\sub{F}(\{x\})} \geq \frac16\).
This foreshadows Theorem~\ref{Theorem Non-Unimodular}.
\end{point}

\begin{Lemma}\label{Lemma Very Small S}
Suppose $G$ is not discrete.
Pick $E\subset G$ with positive measure, and $X = \{x_1, \hdots, x_n\} \subset G$.
For any \(c > 0\), there exists $S \subset E$ such that $0 < |S| \leq c$
	and $\{x_1 S, \hdots, x_n S\}$ are mutually disjoint.
\end{Lemma}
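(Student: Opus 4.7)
The plan is to take $S = E \cap pV$ for a carefully chosen point $p \in G$ and a small symmetric open neighborhood $V$ of $e$. Setting $y_{ij} := x_i^{-1} x_j$ for $i \neq j$ (finitely many elements, all distinct from $e$ since the $x_i$ are distinct), the mutual disjointness of $\{x_i S\}$ is equivalent to $S \cap y_{ij} S = \varnothing$ for each $i \neq j$; and when $S \subset pV$, this reduces further to $p^{-1} y_{ij} p \notin V V^{-1}$.

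First I would locate $p$ in the support of the Radon measure $\nu : A \mapsto |A \cap E|$, which is non-empty because $|E| > 0$ (the union of open $\nu$-null sets is itself $\nu$-null, by inner regularity on compact subsets). By definition every open neighborhood of $p$ meets $E$ in positive measure. Next, since each $p^{-1} y_{ij} p \neq e$ and $G$ is Hausdorff, I would pick an open $W \ni e$ containing none of these finitely many conjugates, and then a symmetric open $V \ni e$ with $V V^{-1} \subset W$ (via continuity of the group operation at $(e,e)$). Because $G$ is non-discrete, Haar measure assigns $\{e\}$ measure zero, so outer regularity at $\{e\}$ lets me shrink $V$ further to satisfy $|V| \leq c$ — a step that only tightens the inclusion $V V^{-1} \subset W$.

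Setting $S := E \cap pV$ then delivers the conclusion: $|S| > 0$ because $pV$ is an open neighborhood of the support point $p$; $|S| \leq |pV| = |V| \leq c$ by left-invariance of Haar measure; and any $z \in x_i S \cap x_j S$ for $i \neq j$ would produce $v_1, v_2 \in V$ with $pv_1 = y_{ij} p v_2$, hence $p^{-1} y_{ij} p = v_1 v_2^{-1} \in V V^{-1}$, contradicting the choice of $V$. The main subtlety is the choice of $p$: a point of $E$ selected naively need not satisfy $|E \cap pV| > 0$ for arbitrarily small $V$ (in a non-discrete group $|E \cap pV| \to 0$ as $V \to \{e\}$), which is why I would pass to the support of the restricted Haar measure rather than simply pick $p \in E$ directly.
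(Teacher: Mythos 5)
Your proof is correct, and while the overall shape is parallel to the paper's (both arguments confine $S$ to a single small translate of a neighborhood of $e$ chosen so that the relevant difference set misses $X^{-1}X\setminus\{e\}$), the mechanism differs in two genuine ways. First, the paper locates the positive-measure piece by taking a compact $K\subset E$ with $|K|>0$, covering it by finitely many \emph{right} translates $Uk_1,\hdots,Uk_n$, and applying pigeonhole to find one with $|Uk_i\cap K|>0$; you instead pass to a point $p$ in the support of the Radon measure $A\mapsto|A\cap E|$ and take $S=E\cap pV$. Both routes lean on the same inner-regularity input, but yours replaces the covering/pigeonhole step with a soft ``density point'' existence argument. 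Second, the left/right choice matters: with the paper's right translates, $s_1,s_2\in Uk_i$ gives $s_1s_2^{-1}\in UU^{-1}$ directly, so the single condition $UU^{-1}\cap X^{-1}X=\{e\}$ suffices and $U$ can be fixed before the translate is located; with your left translate $pV$, the disjointness condition becomes $p^{-1}x_i^{-1}x_jp\notin VV^{-1}$, so $V$ must be chosen \emph{after} $p$ to dodge the finitely many conjugates. In exchange, your size bound $|pV|=|V|\leq c$ is immediate from left-invariance, whereas the paper must control $\max_{k\in K}|Uk|=|U|\max_{k\in K}\Delta(k)$ via continuity of the modular function on $K$. Either argument is a complete proof of the lemma.
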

\begin{proof}
Let $K\subset E$ be any compact set with positive measure.
Let $U$ be a small neighborhood of $e$, so $U U^{-1} \cap X^{-1} X = \{e\}$
	and $\max_{k\in K}|Uk|\leq c$.
Pick \(k_1, \hdots, k_n\in K\) such that \(K\subset U k_1 \cup \hdots \cup U k_n\).
Now $0 < |K| \leq \sum_{i=1}^n |U k_i \cap K|$, hence $0 < |U k_i \cap K|$ for some $i$.
Take $S = U k_i \cap K$.
\end{proof}

\begin{Theorem}\label{Theorem weak but not strong nondiscrete}
Suppose $G$ is not discrete.
Given any \(\mu\in\Mcal\) and \(x\in G\setminus\{e\}\),
	there exists a net \(\net*{S_\Pcal}\) in \(\Cp\)
	so that \(\mu\sub{S_\Pcal}\to \mu\), but
	\(S_\Pcal \cap x S_\Pcal = \varnothing\) for all \(\Pcal\).
\end{Theorem}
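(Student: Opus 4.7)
My plan is to mirror the proof of Theorem~\ref{Theorem weak but not strong discrete}, replacing counting by Haar measure and Lemma~\ref{Lemma if E is infinite} by Lemma~\ref{Lemma Very Small S}. I index the net by the directed set $\Pscr$ of finite measurable partitions of $G$ under refinement. Given $\Pcal = \set*{E_1, \ldots, E_p}$, I will construct $S_\Pcal = S_1 \sqcup \cdots \sqcup S_p \in \Cp$ with $S_i \subset E_i$, $\abs*{S_i}/\abs*{S_\Pcal} = \mu(E_i)$, and $S_\Pcal \cap x S_\Pcal = \varnothing$; convergence $\mu\sub{S_\Pcal} \to \mu$ then follows from Lemma~\ref{Lemma Partition Basis}.

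Because means factor through $\Lcal_\infty(G)$, $\mu(E_i) > 0$ forces $\abs*{E_i} > 0$; I take $S_i = \varnothing$ whenever $\mu(E_i) = 0$. Pick a scale $L = L_\Pcal > 0$ small enough that $6 L < \abs*{E_i}$ for every $i$ with $\mu(E_i) > 0$ and $\abs*{E_i} < \infty$ (no constraint if all such $\abs*{E_i}$ are infinite); the target is $\abs*{S_i} = \mu(E_i) L$, so $\abs*{S_\Pcal} = L$. Build $S_i$ inductively: with $S_1, \ldots, S_{i-1}$ already satisfying the required translation-disjointness, set $E_i^* = E_i \setminus \set*{e, x, x^{-1}}(S_1 \cup \cdots \cup S_{i-1})$. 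From $\abs*{S_1 \cup \cdots \cup S_{i-1}} \leq L$ and left-invariance of Haar, $\abs*{E_i^*} \geq \abs*{E_i} - 3 L > 3 \mu(E_i) L$ by the choice of $L$, so the sub-lemma below produces $S_i \subset E_i^*$ with $\abs*{S_i} = \mu(E_i) L$ and $S_i \cap x S_i = \varnothing$. The cross-term checks $S_i \cap x S_j = \varnothing$ and $S_j \cap x S_i = \varnothing$ for $j < i$ are immediate from $S_i \subset E_i^*$ avoiding $x S_j$ and $x^{-1} S_j$.

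The key sub-lemma is: if $E \subset G$ is measurable with $3m < \abs*{E}$, then there is measurable $S \subset E$ with $\abs*{S} = m$ and $S \cap xS = \varnothing$. I would prove this by Zorn's lemma on the family of (classes mod null sets of) measurable $S \subset E$ with $S \cap xS = \varnothing$ and $\abs*{S} \leq m$, ordered by inclusion. A chain has measures tending to some $\sigma \leq m$ along a countable monotone cofinal subchain, whose union lies in the family and is an upper bound. Zorn yields a maximal $S^*$; if $\abs*{S^*} < m$, then $E \setminus \set*{e, x, x^{-1}} S^*$ has measure at least $\abs*{E} - 3\abs*{S^*} > 0$, and Lemma~\ref{Lemma Very Small S} with $X = \set*{e, x}$ and $c = m - \abs*{S^*}$ delivers a $T$ disjoint from $S^*$, $xS^*$, $x^{-1} S^*$, with $0 < \abs*{T} \leq m - \abs*{S^*}$ and $T \cap xT = \varnothing$; then $S^* \cup T$ strictly extends $S^*$ in the family, a contradiction.

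The last step is that $S_i$ is only Borel, not necessarily compact. Inner regularity of Haar lets me replace each $S_i$ by a compact subset of arbitrarily close measure, perturbing $\mu\sub{S_\Pcal}(E_i)$ by less than $1/p$, which still yields $\mu\sub{S_\Pcal} \to \mu$. The main obstacle is that Lemma~\ref{Lemma Very Small S} by itself gives no lower bound on $\abs*{T}$, so a single invocation typically undershoots a prescribed target measure; Zorn's lemma, together with the observation that bounded-measure chains admit countable cofinal subchains, is what makes the exact realization $\abs*{S^*} = m$ possible. Ensuring compactness is then a secondary technicality handled by inner regularity.
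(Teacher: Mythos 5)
Your proof is correct, and its overall architecture is the same as the paper's: index the net by the directed set of finite measurable partitions ordered by refinement, inductively choose $S_i\subset E_i$ while deleting the $\set{x,x^{-1}}$-translates of the earlier pieces, invoke Lemma~\ref{Lemma Very Small S} to get something of positive measure disjoint from its $x$-translate, arrange the exact ratios $|S_i|/|S_\Pcal|=\mu(E_i)$ so that Lemma~\ref{Lemma Partition Basis} gives convergence, and finish with inner regularity to pass to compact sets. The one genuine divergence is how the exact value $|S_i|=\mu(E_i)L$ is realized. You prove a sub-lemma by Zorn's lemma producing, inside any $E$ with $|E|>3m$, a set of measure exactly $m$ disjoint from its $x$-translate; your chain/cofinality and maximality arguments do check out. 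The paper gets the same effect more cheaply: it first uses Lemma~\ref{Lemma Very Small S} to produce $S_i$ with merely $0<|S_i|\leq c$ (the upper bound $c=m/2p$ being what guarantees the later $E_{k+1}\setminus\set{x,x^{-1}}(S_1\cup\hdots\cup S_k)$ still has positive measure), and only \emph{afterwards} shrinks each $S_i$ to a subset $S_i'$ with $|S_i'|=m'\cdot\mu(E_i)$, where $m'=\min_i|S_i|$. Shrinking preserves the constraint, since $S_i'\cap xS_i'\subset S_i\cap xS_i=\varnothing$, so the only extra input is that Haar measure on a non-discrete group is non-atomic and hence attains every value in $[0,|S_i|]$ on measurable subsets of $S_i$. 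Your Zorn argument is essentially an inlined proof of that intermediate-value property, made harder by carrying the disjointness constraint through the exhaustion; it is valid but buys nothing over the shrink-afterwards trick. Both your write-up and the paper leave the final compact-approximation step at the same level of detail, so no complaint there.
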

\begin{proof}
The following construction yields sets \(\net*{S_\Pcal}\) that may not be compact.
This suffices to prove the theorem, since each \(S_\Pcal\)
can be approximated from within by a compact set.

Let \(\Pscr\) be the directed set of all finite measurable partitions of \(G\),
	ordered by refinement.
Pick \(\Pcal=\{E_1, \hdots, E_p\}\in \Pscr\).
For \(1\leq i\leq p\), we will choose \(S_i'\subset E_i\) such that
$\abs*{S_i'} / \big(\abs*{S_1'} + \hdots + \abs*{S_k'}\big) = \mu(E_i)$,
	then take $S_\Pcal = S_1' \cup \hdots \cup S_p'$.
Thus $\mu\sub{S_\Pcal}(E_i) = \mu(E_i)$,
	and $\mu\sub{S_\Pcal}\to\mu$ by Lemma~\ref{Lemma Partition Basis}.

If $\mu(E_i) = 0$ we can take $S_i = \varnothing$, so assume
$0 < m=\min\{1, |E_1|,\hdots,|E_p|\}$ and let $c = m / 2p$.
By Lemma~\ref{Lemma Very Small S},
	choose \(S_1 \subset E_1\) with \(0 <|S_1| \leq c\) and \(S_1 \cap xS_1 = \varnothing\).
For \(k < p\), inductively choose
\(S_{k+1} \subset E_{k+1} \setminus \set{x, x^{-1}}(S_1 \cup \hdots \cup S_k)\)
with \(0<|S_{k+1}| \leq c\) and \(S_{k+1} \cap xS_{k+1} = \varnothing\).
This is possible, since \(|E_{k+1} \setminus \set{x, x^{-1}}(S_1 \cup \hdots \cup S_k)|
	\geq m - 2kc > 0\).

Finally, let \(m' = \min\set*{|S_1|, |S_2|, \hdots, |S_p|}\).
For each $i$, choose \(S_i' \subset S_i\) with \(|S_i'| = m'\cdot \mu(E_i)\).
Now $\mu\sub{S_\Pcal}(E_i) = \mu(E_i)$ and \(S_\Pcal \cap xS_\Pcal = \varnothing\), as desired.
\end{proof}

%%%%%%%%%%%%%%%%%%%%%
%%%%%%%%%%%%%%%%%%%%%
%%%%%%%%%%%%%%%%%%%%%

\section{\texorpdfstring{$\bm{\kappa}$-Compactness}{kappa-Compactness}}

\begin{Definition}
For the rest of the paper, triples of the form \((\Pcal, K, \epsilon)\)
	are always understood to range over
	finite measurable partitions \(\Pcal\) of \(G\),
	compact sets \(K\subset G\), and \(\epsilon \in (0,1)\).
Recall that \(\Cp(K,\epsilon)\) is the set of all compact
	\((K,\epsilon)\)-invariant sets with positive measure.
In these terms, we can give the formal definition:
\begin{center}
\(\TLIM_0(G) = \set*{
	\mu\in\TLIM(G) : \paren*{\forall \paren*{\Pcal, K, \epsilon}}\ 
	\paren*{\exists A\in\Cp(K,\epsilon)}\ 
	\mu\sub{A} \in \Ncal(\mu, \Pcal, \epsilon)}\).
\end{center}
\end{Definition}

For $S\subset G$, let $\kappa(S)$ denote the smallest cardinal such that there exists $\Kcal$,
a collection of compact subsets of $G$ with $\#{\Kcal} = \kappa(S)$ and $S\subset \bigcup \Kcal$.
Notice that either \(\kappa(G) = 1\), \(\kappa(G) = \Nbb\), or \(\kappa(G) > \Nbb\).

\begin{Proposition}\label{Prop When G is compact}
When $\kappa(G) = 1$, $\TLIM_0(G) = \TLIM(G) = \{\mu\sub{G}\}$.
\end{Proposition}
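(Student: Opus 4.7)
The plan is to reduce the proposition to standard facts about Haar measure on compact groups. Since \(\kappa(G) = 1\), the group \(G\) is covered by a single compact set, so \(G\) itself is compact; hence the Haar measure \(|\cdot|\) is finite, and \(\mu\sub{G} = \chi\sub{G}/|G|\) is a well-defined element of \(\Mcal_1(G) \cap \Mcal_{\Cp}(G)\).

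First I would establish \(\TLIM(G) = \{\mu\sub{G}\}\). The key observation is that a compact group is unimodular, so for every \(\phi\in\Linfty\) the convolution \(\mu\sub{G} * \phi\) collapses to a constant:
\[
(\mu\sub{G} * \phi)(y) = \frac{1}{|G|}\int_G \phi(x^{-1}y)\dd{x} = \frac{1}{|G|}\int_G \phi(z)\dd{z} = \mu\sub{G}(\phi),
\]
after the change of variables \(z = x^{-1}y\), which is measure-preserving by left-invariance and unimodularity of Haar measure. Now for an arbitrary \(\mu\in\TLIM(G)\), applying the defining identity with \(f = \mu\sub{G}\) gives \(\mu(\phi) = \mu(\mu\sub{G}*\phi) = \mu\sub{G}(\phi)\) for every \(\phi\), so \(\mu = \mu\sub{G}\). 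Conversely, \(\mu\sub{G}\) is left-invariant (hence topologically left-invariant by the same convolution calculation), so \(\mu\sub{G}\in\TLIM(G)\).

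Next, for \(\TLIM_0(G)\): the set \(A = G\) lies in \(\Cp\), and \(|A\Sdiff xA|/|A| = 0\) for every \(x\in G\), so \(A\) is \((K,\epsilon)\)-invariant for every compact \(K\) and every \(\epsilon > 0\). Thus the constant net with value \(\mu\sub{G}\) is a topological F\o{}lner net converging to \(\mu\sub{G}\), witnessing \(\mu\sub{G}\in\TLIM_0(G)\). Combined with the trivial inclusion \(\TLIM_0(G)\subset\TLIM(G)\), we conclude \(\TLIM_0(G) = \TLIM(G) = \{\mu\sub{G}\}\).

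There is essentially no obstacle here: the whole statement boils down to the classical uniqueness of the invariant mean on a compact group (which is the Haar probability measure) together with the observation that \(G\) itself trivially serves as a constant F\o{}lner net.
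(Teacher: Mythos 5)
Your proof is correct, but the uniqueness argument takes a genuinely different route from the paper's. The paper restricts an arbitrary $\mu\in\TLIM(G)$ to $C_0(G)$, invokes the Riesz--Kakutani representation theorem to obtain a nonzero left-invariant measure, identifies it with Haar measure by uniqueness, and then uses the fact that $f*\phi$ is continuous for every $f\in\Mcal_1(G)$ to transfer the identification back to all of $\Linfty$. You instead test topological invariance against the single function $f=\mu\sub{G}$ and compute directly that $\mu\sub{G}*\phi$ is the constant function with value $\mu\sub{G}(\phi)$ (legitimate, since compact groups are unimodular so the substitution $z=x^{-1}y$ preserves Haar measure), after which $\mu(\phi)=\mu(\mu\sub{G}*\phi)=\mu\sub{G}(\phi)\cdot\mu(\chi_G)=\mu\sub{G}(\phi)$ because a mean sends constants to their values. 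Your approach is more elementary --- it avoids the representation theorem and the uniqueness of Haar measure entirely, at the cost of a Fubini-type calculation to confirm $\mu\sub{G}\in\TLIM(G)$, which you only gesture at (``by the same convolution calculation'') but which is routine. You also spell out the $\TLIM_0$ half explicitly via the constant F\o{}lner net $A=G$, which the paper leaves implicit; that is a point in your favor for completeness.
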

\begin{proof}
Pick $\mu\in\TLIM(G)$.
Since $G$ is compact, $\chi_G\in C_0(G)$ and \(\mu\sub{G}(\chi_G) = 1\).
Thus $\mu|\sub{C_0(G)}$ is nonzero, and it induces a nonzero left-invariant measure on $G$
via the Riesz-Kakutani representation theorem.
By the uniqueness of Haar measure, we see $\mu|\sub{C_0(G)} = \mu\sub{G}|\sub{C_0(G)}$.
Pick $\phi\in\Linfty$ and $f\in P_1(G)$.
Then $f*\phi\in C_0(G)$, so $\mu(\phi) = \mu(f*\phi) = \mu\sub{G}(f*\phi) = \mu\sub{G}(\phi)$.
\end{proof}

\begin{Lemma}\label{Lemma if Kappa S is small then measure 0}
If $\mu\in\LIM(G)$ and $\kappa(S) < \kappa(G)$, then $\mu(S) = 0$.
\end{Lemma}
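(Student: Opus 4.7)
The plan is to produce countably many pairwise disjoint left-translates of $S$ and then invoke left-invariance and finite additivity to force $\mu(S) = 0$. The translates $x_i S$ and $x_j S$ are disjoint precisely when $x_j^{-1} x_i \notin SS^{-1}$, so the whole argument reduces to finding infinitely many $x_n$ avoiding the ``bad'' set generated by $SS^{-1}$ at each step.

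First I would check the cardinal bookkeeping. Taking products of compact covers gives $\kappa(SS^{-1}) \leq \kappa(S)\cdot\kappa(S^{-1}) = \kappa(S)^2$. When $\kappa(S)$ is finite this is finite, and since $\kappa(G) \geq \Nbb$ (forced by $\kappa(S) < \kappa(G)$) we get $\kappa(SS^{-1}) < \kappa(G)$; when $\kappa(S)$ is infinite, infinite cardinal arithmetic gives $\kappa(S)^2 = \kappa(S)$, so again $\kappa(SS^{-1}) \leq \kappa(S) < \kappa(G)$.

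Next I would build $\{x_n\}_{n=1}^\infty$ by induction. Set $x_1 = e$. Given $x_1, \ldots, x_n$, the union $\bigcup_{j\leq n} x_j(SS^{-1})$ is covered by $n\cdot\kappa(SS^{-1}) < \kappa(G)$ compact sets, hence is a proper subset of $G$. Pick $x_{n+1}$ outside it; then $x_j^{-1} x_{n+1} \notin SS^{-1}$ for all $j\leq n$, i.e., $x_{n+1}S$ is disjoint from each previously chosen translate.

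Finally, left-invariance gives $\mu(x_j S) = \mu(S)$, and finite additivity on the disjoint union yields $n\,\mu(S) = \mu\bigl(\bigsqcup_{j=1}^n x_j S\bigr) \leq \mu(G) = 1$ for every $n$, so $\mu(S) = 0$. The only slightly delicate point is the cardinal inequality $\kappa(SS^{-1}) < \kappa(G)$, which requires the finite/infinite case split above; the rest is routine.
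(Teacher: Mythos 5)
Your proof is correct and takes essentially the same approach as the paper: the paper's one-line justification ``since \(\Nbb \cdot \kappa(S) \leq \kappa(G)\), we can find disjoint translates \(\set*{x_1 S, x_2 S, \hdots}\)'' is precisely the inductive construction you spell out (avoiding \(\bigcup_{j\leq n} x_j SS^{-1}\) at each step), and the concluding additivity argument is identical. You have merely supplied the cardinal bookkeeping that the paper leaves implicit.
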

\begin{proof}
Since \(\Nbb \cdot \kappa(S) \leq \kappa(G)\), we can find disjoint translates
\(\set*{x_1 S, x_2 S, \hdots}\).
If $\mu(S) > 0$, then $\mu(x_1 S \cup \hdots \cup x_n S) = n \cdot \mu(S)$
is eventually greater than 1, contradicting \(\norm*{\mu} = 1\).
\end{proof}

\begin{point}
Throughout the paper, we make free use of the following formulas:
\\For \(A,B \in\Cp\),
\(\norm*{\mu\sub{A} - \mu\sub{B}}_1
	= \mfrac{|A\setminus B|}{|A|} + \mfrac{|B\setminus A|}{|B|}
	+ |A\cap B| \cdot \abs*{\mfrac{1}{|A|} - \mfrac{1}{|B|}}\).
\\If \(A\subset B\),
this becomes
\(\norm*{\mu\sub{A} - \mu\sub{B}}_1
	= 0
	+ \mfrac{|B\setminus A|}{|B|}
	+ |A| \paren*{\mfrac{1}{|A|} - \mfrac{1}{|B|}}
	= 2 \mfrac{|B\setminus A|}{|B|}.\)
\end{point}

\begin{Lemma}\label{Lemma many disjoint sets}
Pick \((\Pcal, K, \epsilon)\) and $\mu\in\TLIM_0(G)$.
Let \(\kappa = \kappa(G)\).
There exists a family of mutually disjoint sets
\(\set*{A_\alpha : \alpha < \kappa} \subset \Cp(K,\epsilon)\)
such that \(\set*{\mu\sub{A_\alpha} : \alpha < \kappa} \subset \Ncal(\mu, \Pcal, \epsilon)\).
\end{Lemma}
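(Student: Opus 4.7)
My plan is transfinite recursion on \(\alpha<\kappa\). The base case \(\alpha=0\) is immediate from the definition of \(\TLIM_0(G)\). At a stage \(\alpha>0\), suppose mutually disjoint \(\set*{A_\beta:\beta<\alpha}\subset\Cp(K,\epsilon)\) with \(\mu\sub{A_\beta}\in\Ncal(\mu,\Pcal,\epsilon)\) has already been chosen, and set \(S=\bigcup_{\beta<\alpha}A_\beta\). Because \(\kappa(S)\leq\#\alpha<\kappa(G)\), Lemma~\ref{Lemma if Kappa S is small then measure 0} gives \(\mu(S)=0\); if \(\#\alpha>\aleph_0\) and \(S\) is itself not Haar measurable, I replace it throughout the argument by a Haar measurable hull of the same covering number, which likewise has \(\mu\)-measure zero by the same lemma.

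To construct \(A_\alpha\), I first refine \(\Pcal\) to the partition \(\Pcal^*=\set*{E\cap S,\,E\setminus S:E\in\Pcal}\) and fix auxiliary tolerances \(\epsilon',\delta>0\), chosen uniformly in \(\alpha\) to be much smaller than \(\epsilon/\#\Pcal\). Invoking \(\mu\in\TLIM_0(G)\) at the tighter triple \((\Pcal^*,K,\epsilon')\) produces \(A\in\Cp(K,\epsilon')\) with \(\mu\sub{A}\in\Ncal(\mu,\Pcal^*,\epsilon')\); summing the estimate \(\abs*{\mu\sub{A}(E\cap S)-\mu(E\cap S)}<\epsilon'\) over \(E\in\Pcal\) and using \(\mu(E\cap S)=0\) immediately gives \(\abs*{A\cap S}<\#\Pcal\cdot\epsilon'\cdot\abs*{A}\). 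By outer regularity of Haar measure I enclose \(A\cap S\) in an open set \(U\) with \(\abs*{U}\leq\abs*{A\cap S}+\delta\abs*{A}\), and set \(A_\alpha=A\setminus U\). This is compact and disjoint from \(S\) by construction.

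Checking \(A_\alpha\in\Cp(K,\epsilon)\) and \(\mu\sub{A_\alpha}\in\Ncal(\mu,\Pcal,\epsilon)\) is routine. The inclusion \(A_\alpha\Sdiff xA_\alpha\subset(A\Sdiff xA)\cup(A\cap U)\cup x(A\cap U)\), together with the left-invariance of Haar measure and the lower bound \(\abs*{A_\alpha}\geq(1-\#\Pcal\cdot\epsilon'-\delta)\abs*{A}\), bounds the invariance ratio by \((\epsilon'+2(\#\Pcal\cdot\epsilon'+\delta))/(1-\#\Pcal\cdot\epsilon'-\delta)\); an analogous computation bounds \(\abs*{\mu\sub{A_\alpha}(E)-\mu(E)}\) by \(2\epsilon'+(\#\Pcal\cdot\epsilon'+\delta)/(1-\#\Pcal\cdot\epsilon'-\delta)\). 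Both expressions drop below \(\epsilon\) once \(\epsilon'\) and \(\delta\) are taken small enough at the outset, which can be done uniformly in \(\alpha\) since \(\Pcal\) is fixed.

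The one delicate point I anticipate is the potential failure of Haar measurability of \(S\) when \(\#\alpha>\aleph_0\); I dispose of it via the measurable-hull substitution noted in the first paragraph, so that \(\Pcal^*\) remains a genuine measurable partition of \(G\) while still satisfying \(\mu(E\cap S)=0\). Everything else is standard bookkeeping carried along the recursion.
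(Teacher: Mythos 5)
Your recursion is essentially the paper's: transfinite induction, with Lemma~\ref{Lemma if Kappa S is small then measure 0} forcing the union of the previously chosen sets to be $\mu$-null, a refinement of $\Pcal$ by that union, an application of the definition of $\TLIM_0(G)$ at a tighter tolerance, and a trimming step whose effect on $(K,\epsilon)$-invariance and on membership in $\Ncal(\mu,\Pcal,\epsilon)$ you control exactly as the paper does; the estimates you sketch are correct.

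The one soft spot is the step you yourself flag. When $\#\alpha>\aleph_0$ the union $S=\bigcup_{\beta<\alpha}A_\beta$ of compacta need not be Haar measurable, and you dispose of this by positing ``a Haar measurable hull of the same covering number.'' As written this is an assertion, not a construction --- and note that Lemma~\ref{Lemma if Kappa S is small then measure 0} can only be applied to a \emph{measurable} set (otherwise $\mu(S)=\mu(\chi_S)$ is not even defined), so producing a measurable superset that still has covering number below $\kappa(G)$ is precisely the content of the difficulty. The object does exist: enclose each $A_\beta$ in a precompact open set; a union of fewer than $\kappa(G)$ such open sets is open, hence measurable, and still has small covering number. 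The paper builds exactly this into the recursion by fixing one precompact open neighborhood $U$ of $e$, carrying the stronger inductive hypothesis that the translates $A_\alpha U$ are mutually disjoint, and cutting the new set against the open set $B=\bigcup_{\alpha}A_\alpha UU^{-1}$; this also makes $A\setminus B$ automatically compact, so your separate outer-regularity step becomes unnecessary. With that one sentence supplied, your proof is complete.
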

\begin{proof}
% By definition of \(\TLIM_0(G)\), obtain \(A_0\).
Choose any precompact open set \(U\).
For \(\beta < \kappa(G)\),
suppose \(\set*{A_\alpha : \alpha < \beta} \subset \Cp(K,\epsilon)\)
have been chosen so that \(\set*{A_\alpha U : \alpha < \beta}\) are mutually disjoint
and \(\set*{\mu\sub{A_\alpha} : \alpha < \beta} \subset \Ncal(\mu, \Pcal, \epsilon)\).
Once \(A_\beta\) is constructed, the result follows by transfinite induction.

Define \(B = \bigcup_{\alpha < \beta} A_\alpha UU^{-1}\), which is open and thus measurable.
Notice \(\kappa(B) = \beta < \kappa(G)\).
If \(\Pcal = \set*{E_1, \hdots, E_p}\),
let $\Pcal_B = (E_1 \setminus B, \hdots, E_p \setminus B, B)$.
Let \(\delta = \epsilon/4 \),
and pick \(A\in\Cp(K,\delta)\) with \(\mu\sub{A} \in \Ncal(\mu, \Pcal_B, \delta)\).
Define \(A_\beta = A \setminus B\),
which ensures \(\set*{A_\alpha U : \alpha\leq \beta}\) are mutually disjoint.
Lemma~\ref{Lemma if Kappa S is small then measure 0} tells us $\mu(B) = 0$,
hence \(\mu\sub{A}(B) < \delta\).
Thus \(\big\|\mu\sub{A} - \mu\sub{A_\beta}\big\|_1
	= 2 \mfrac{|A \setminus A_\beta|}{|A|}
	= 2\mfrac{|A\cap B|}{|A|}
	= 2 \mu\sub{A}(B)
	< 2 \delta\).
By the triangle inequality, \(\mu\sub{A_\beta} \in \Ncal(\mu, \Pcal, 3\delta)\).
If \(x\in K\), \(\abs{xA_\beta \Sdiff A_\beta}
	\leq \abs{xA_\beta \Sdiff xA} + \abs{xA \Sdiff A} + \abs{A\Sdiff A_\beta}
	<3\delta \abs{A} < 4\delta \abs{A_\beta}\),
so \(A_\beta\in\Cp(K,4\delta)\).
\end{proof}

%%%%%%%%%%%%%%%%%%%%%
%%%%%%%%%%%%%%%%%%%%%
%%%%%%%%%%%%%%%%%%%%%

\section{The method of Hindman and Strauss}

\begin{Lemma}\label{Lemma TLIM0 is total in TLIM}
The closed convex hull of $\TLIM_0(G)$ is all of $\TLIM(G)$.
\end{Lemma}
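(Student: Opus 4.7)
The plan is to realize $\mu$ as a $w^*$-averaged combination of right-translates of a single topological F\o{}lner set, and then identify the resulting closed convex hull with $\overline{\conv}(\TLIM_0(G))$.

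Fix a topological F\o{}lner net $\net*{A_\alpha}$, which exists because $G$ is amenable. Writing $\check{f}(x) := f(x^{-1})\Delta(x^{-1})$ for the involution that preserves $\Mcal_1(G)$, a routine change of variables in the left Haar measure gives $\mu\sub{A_\alpha y}(\phi) = (\check{\mu\sub{A_\alpha}} * \phi)(y)$ for every $\phi \in \Linfty$ and every $y \in G$. Since $\check{\mu\sub{A_\alpha}} \in \Mcal_1(G)$, topological invariance of $\mu$ yields the key identity
\[
\mu(\phi) \;=\; \mu\bigl(\check{\mu\sub{A_\alpha}} * \phi\bigr) \;=\; \mu\bigl(y \mapsto \mu\sub{A_\alpha y}(\phi)\bigr).
\]
Approximating $\mu$ in $w^*$ by a net $f_\beta \in \Mcal_1(G)$ via Proposition~\ref{Prop M1 Dense in M} then gives $\mu(\phi) = \lim_\beta \int f_\beta(y)\, \mu\sub{A_\alpha y}(\phi)\, dy$, exhibiting $\mu$ as a $w^*$-limit of continuous convex combinations of $\set*{\mu\sub{A_\alpha y} : y \in G}$. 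Hence $\mu \in \overline{\conv}\set*{\mu\sub{A_\alpha y} : y \in G}$ for each $\alpha$.

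Next I exploit that right-translation preserves F\o{}lner ratios: using $|Ey| = \Delta(y)|E|$ one verifies $|x(A_\alpha y) \Sdiff A_\alpha y|/|A_\alpha y| = |xA_\alpha \Sdiff A_\alpha|/|A_\alpha|$, so whenever $A_\alpha \in \Cp(K,\epsilon)$ each right-translate $A_\alpha y$ also lies in $\Cp(K,\epsilon)$. Combined with the previous paragraph,
\[
\mu \;\in\; \bigcap_{(K,\epsilon)} \overline{\conv}\set*{\mu\sub{A} : A \in \Cp(K,\epsilon)}.
\]

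It remains to show this intersection equals $\overline{\conv}(\TLIM_0(G))$. The inclusion $\supseteq$ is immediate from $\TLIM_0 \subseteq \overline{\set*{\mu\sub{A} : A \in \Cp(K,\epsilon)}}$ for each $(K,\epsilon)$. For the reverse I argue by Hahn-Banach: if some $\lambda$ in the intersection failed to lie in $\overline{\conv}(\TLIM_0)$, a separating $\phi \in \Linfty$ would give $\lambda(\phi) > c \geq \sup_{\nu \in \TLIM_0} \nu(\phi)$. For any $\delta > 0$ a sufficiently strict $(K,\epsilon)$ must satisfy $\sup_{A \in \Cp(K,\epsilon)} \mu\sub{A}(\phi) \leq c + \delta$, for otherwise a diagonal extraction produces a topological F\o{}lner net whose $w^*$-cluster point $\nu_0 \in \TLIM_0$ would violate $\nu_0(\phi) \leq c$. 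Since $\lambda \in \overline{\conv}\set*{\mu\sub{A} : A \in \Cp(K,\epsilon)}$, this forces $\lambda(\phi) \leq c + \delta$; letting $\delta \to 0$ contradicts $\lambda(\phi) > c$. The reverse inclusion $\overline{\conv}(\TLIM_0) \subset \TLIM$ of the lemma is immediate from $\TLIM_0 \subset \TLIM$ and the $w^*$-closed convexity of $\TLIM$. The main technical obstacles will be the modular-function bookkeeping in the Haar change of variables and the diagonal-extraction argument in the Hahn-Banach step.
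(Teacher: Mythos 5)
Your proof is correct, and it is worth noting that the paper does not actually prove this lemma at all: its ``proof'' is a pointer to Chou \cite{Chou70}, to Milnes's extension (whose published argument the author says has a gap in the non-$\sigma$-compact case), and to \cite{hopfensperger2020} for the repair. What you have written is essentially a self-contained reconstruction of that classical argument. The first half --- $\mu(\phi)=\mu(\check{\mu}\sub{A_\alpha}*\phi)$, the identity $\mu\sub{A_\alpha y}(\phi)=(\check{\mu}\sub{A_\alpha}*\phi)(y)$, right-translation preserving the ratios $\abs*{xA\Sdiff A}/\abs*{A}$, and the barycenter conclusion $\mu\in\overline{\conv}\set*{\mu\sub{A_\alpha y}:y\in G}$ --- is exactly Chou's device; each computation checks out ($\int\check f=\int f$, $\abs*{Ey}=\Delta(y)\abs*{E}$, and $y\mapsto\mu\sub{A_\alpha y}(\phi)$ is a bona fide element of $\Linfty$ precisely because it equals a convolution). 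The place where your write-up genuinely earns its keep is the second half: passing from $\mu\in\bigcap_{(K,\epsilon)}\overline{\conv}\set*{\mu\sub{A}:A\in\Cp(K,\epsilon)}$ to $\mu\in\overline{\conv}(\TLIM_0(G))$. The gap the paper attributes to Milnes comes from trying to assemble a single nested F\o{}lner net, which fails when $G$ is not $\sigma$-compact; your Hahn--Banach argument avoids nesting entirely, since the net $\net*{A_{(K,\epsilon)}}$ indexed by the directed set of pairs $(K,\epsilon)$ is automatically a topological F\o{}lner net and its $w^*$-cluster points land in $\TLIM_0(G)$ by compactness of $\Mcal(G)$. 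Two small points deserve an explicit line in a final write-up: the fact that a barycenter $\phi\mapsto\int f_\beta(y)\,\mu\sub{A_\alpha y}(\phi)\dd{y}$ lies in $\overline{\conv}\set*{\mu\sub{A_\alpha y}:y\in G}$ needs the same separation argument you use later (it is not literally a finite convex combination), and the Hahn--Banach step needs $\TLIM_0(G)\neq\varnothing$, which follows from Greenleaf's existence of topological F\o{}lner nets plus $w^*$-compactness. Neither is a gap; both are one-sentence fixes.
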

\begin{proof}
Chou originally proved this for $\sigma$-compact groups, see \cite[Theorem 3.2]{Chou70}.
In \cite{milnes}, Milnes points out that the result is valid even when $G$ is not $\sigma$-compact,
although his construction of a F\o{}lner-net \(\net*{U_\alpha}\) has a small problem:
For each index \(\alpha\), he asks us to choose a compact set \(U_\alpha\),
such that \(U_\beta \subset U_\alpha\) for \(\beta \prec \alpha\).
However, \(\alpha\) may have infinitely many predecessors,
in which case \(\bigcup_{\beta \prec \alpha} U_\beta\) has no reason to be precompact!
For a correct proof in full generality, see \cite[Lemma 4.11]{hopfensperger2020}.
\end{proof}

The following deceptively simple lemma is due to Hindman and Strauss,
see \cite[Proof of Theorem 4.5]{HindmanDensity}.
\begin{Lemma}\label{Main Idea}
Suppose \(\brac*{\mu,\nu\in\TLIM_0(G)} \Rightarrow \brac*{\frac12(\mu + \nu)\in \TLIM_0(G)}\).
Then \(\TLIM_0(G) = \TLIM(G)\).
\end{Lemma}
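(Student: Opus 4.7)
The plan is to bootstrap the midpoint hypothesis so that $\TLIM_0(G)$ contains all finite convex combinations of its elements, and then apply Lemma~\ref{Lemma TLIM0 is total in TLIM}. Fix $\mu\in\TLIM(G)$. By Lemma~\ref{Lemma Partition Basis} it suffices, given any finite measurable partition $\Pcal$, compact $K\subset G$, and $\epsilon\in(0,1)$, to produce $A\in\Cp(K,\epsilon)$ with $\mu\sub{A}\in\Ncal(\mu,\Pcal,\epsilon)$. Lemma~\ref{Lemma TLIM0 is total in TLIM} places $\mu$ in the $w^*$-closed convex hull of $\TLIM_0(G)$; since the convex hull of a set is dense in its closed convex hull, I can pick a finite convex combination $\nu = \sum_{i=1}^k \lambda_i \mu_i$ with $\mu_i\in\TLIM_0(G)$ and $\nu\in\Ncal(\mu,\Pcal,\epsilon/3)$.

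Next I would upgrade the midpoint hypothesis to arbitrary \emph{dyadic} convex combinations by a binary-tree induction on $n$: the uniform average $\tfrac{1}{2^n}\sum_{j=1}^{2^n}\nu_j$ of any $2^n$ elements of $\TLIM_0(G)$ (with repetitions allowed) lies in $\TLIM_0(G)$, by pairing terms and iterating midpoints. Consequently $\sum_i (p_i/2^n)\mu_i\in\TLIM_0(G)$ whenever the $p_i$ are nonnegative integers summing to $2^n$. Pick $n$ large and integers $p_i$ so that $\sum_i|\lambda_i - p_i/2^n| < \epsilon/3$; the dyadic mean $\tilde\nu = \sum_i (p_i/2^n)\mu_i$ then lies in $\TLIM_0(G)$, and satisfies $|\tilde\nu(E)-\nu(E)|\leq \sum_i|\lambda_i-p_i/2^n|<\epsilon/3$ for every measurable $E$, hence $\tilde\nu\in\Ncal(\nu,\Pcal,\epsilon/3)$.

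Finally, since $\tilde\nu\in\TLIM_0(G)$, the defining property applied with parameters $(\Pcal, K, \epsilon/3)$ yields $A\in\Cp(K,\epsilon/3)\subset\Cp(K,\epsilon)$ with $\mu\sub{A}\in\Ncal(\tilde\nu,\Pcal,\epsilon/3)$. The triangle inequality then gives $\mu\sub{A}\in\Ncal(\mu,\Pcal,\epsilon)$, as required. The only non-routine step is the binary-tree induction upgrading midpoints to dyadic means, and I anticipate no real obstacle there; the remaining work is standard triangle-inequality bookkeeping against three error budgets of size $\epsilon/3$.
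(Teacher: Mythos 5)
Your proof is correct and follows essentially the same route as the paper: midpoint closure upgraded to dyadic convex combinations, density of the dyadic rationals, and Lemma~\ref{Lemma TLIM0 is total in TLIM} to reach all of $\TLIM(G)$. The paper compresses this into the observation that $\TLIM_0(G)$ is closed and hence convex; your version merely unpacks that closedness into the explicit $\epsilon/3$ bookkeeping against the defining neighborhoods, which is sound.
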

\begin{proof}
Since the dyadic rationals are dense in \([0,1]\) and \(\TLIM_0(G)\) is closed,
the hypothesis implies \(\TLIM_0(G)\) is convex.
The result follows by Lemma~\ref{Lemma TLIM0 is total in TLIM}.
\end{proof}

Lemma~\ref{Main Idea} is useful because
\(\frac12\paren*{\mu\sub{A} + \mu\sub{B}} = \mu\sub{A\cup B}\)
when \(A,B \in \Cp\) are disjoint and equal in measure.
Lemma~\ref{Lemma Mean Approximation} tells us what happens when \(A\) and \(B\)
are \textit{approximately} disjoint and equal in measure.

\begin{Lemma}\label{Lemma Mean Approximation}
Pick \(A, B \in\Cp\) and \(\delta \in \bigl(0, \frac12\bigr)\).
Suppose \(\mu\sub{B}(A) < \delta\) and \(|A|/|B| \in \bigl( (1-\delta)^2, (1+\delta)^2\bigr)\).
Then \(\norm*{\mu\sub{A\cup B} - \frac12(\mu\sub{A} + \mu\sub{B})}_1 < 3\delta\).
\end{Lemma}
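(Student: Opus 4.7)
\smallskip\noindent\textbf{Proof plan.}
Write $a = |A|$, $b = |B|$, $c = |A \cap B|$, $d = |A \cup B| = a+b-c$, and $f = \frac12(\mu\sub{A} + \mu\sub{B})$. The hypotheses then read $c < \delta b$ and $(1-\delta)^2 < a/b < (1+\delta)^2$, while the automatic inequality $c \leq \min(a,b)$ gives $d \geq \max(a,b) \geq b$; in particular $c/d \leq c/b < \delta$.

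A direct computation of $f - \mu\sub{A\cup B}$ on the three pieces of $A \cup B$ yields $(b-a-c)/(2ad)$ on $A\setminus B$, $(a-b-c)/(2bd)$ on $B\setminus A$, and $[a(a-c)+b(b-c)]/(2abd)$ on $A \cap B$; the last quantity is nonnegative because $c \leq a$ and $c \leq b$. Consequently $\{f > \mu\sub{A \cup B}\}$ always contains $A \cap B$, contains $A \setminus B$ iff $a < b-c$, and contains $B \setminus A$ iff $b < a-c$; the last two conditions are mutually exclusive. Since $\int(f - \mu\sub{A \cup B}) = 0$, I would exploit the identity $\norm{f - \mu\sub{A \cup B}}_1 = 2\int(f - \mu\sub{A \cup B})^+$ to reduce the $L^1$ norm to a single integral in each of three cases.

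In the case $a \leq b - c$, integrating over $A$ telescopes to $\norm{f - \mu\sub{A\cup B}}_1 = (b-a)/d + c(a-c)/(bd)$; the bounds $(b-a)/d \leq (b-a)/b < 1 - (1-\delta)^2 < 2\delta$ and $c(a-c)/(bd) \leq c/d < \delta$ (using $a-c \leq a \leq b$) sum to $<3\delta$. The symmetric case $b \leq a - c$ gives $(a-b)/d + c(b-c)/(ad)$; here $(a-b)/d \leq 1 - b/a < 1 - (1+\delta)^{-2} < 2\delta$, where the last inequality rearranges to $3\delta^2 + 2\delta^3 > 0$, while $c(b-c)/(ad) \leq c/d < \delta$. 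In the residual case $|a-b| < c$, the positivity set reduces to $A \cap B$ and the norm equals $c(a-c)/(bd) + c(b-c)/(ad)$; since $a-c < b$ and $b-c < a$ in this regime, both terms are $\leq c/d < \delta$, totalling $< 2\delta$.

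The work is essentially algebraic bookkeeping; the only mildly subtle point is that the hypothesis $\mu\sub{B}(A) < \delta$ is asymmetric in $A$ and $B$, so the case $a > b$ cannot be recovered by a simple swap and must be treated using the upper bound $a/b < (1+\delta)^2$ together with the elementary inequality $1 - (1+\delta)^{-2} < 2\delta$, valid throughout $\delta \in (0, \frac12)$.
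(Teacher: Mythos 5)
Your proof is correct, and it takes a genuinely different route from the paper. The paper disjointifies first: it sets $B' = B\setminus A$, writes $\mu\sub{A\cup B} = \mu\sub{A\cup B'}$ exactly as the convex combination $\tfrac{1}{1+r^{-1}}\mu\sub{A} + \tfrac{1}{1+r}\mu\sub{B'}$ with $r = |A|/|B'|$, bounds its distance from the equal-weight average by $\abs*{\tfrac{1-r}{1+r}} < 2\delta$ using the hypothesis on $|A|/|B|$, and then pays one more $\delta$ to replace $\mu\sub{B'}$ by $\mu\sub{B}$ via $\norm*{\mu\sub{B}-\mu\sub{B'}}_1 = 2\mu\sub{B}(A) < 2\delta$; the triangle inequality assembles the $3\delta$. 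You instead compute the difference $f - \mu\sub{A\cup B}$ pointwise on the partition $\{A\setminus B,\ B\setminus A,\ A\cap B\}$ and exploit $\norm{g}_1 = 2\int g^+$ for mean-zero $g$ to get an exact expression for the norm in each of three sign-pattern cases, then bound each term. I checked your identities (e.g.\ $(a+b)d - 2ab = a(a-c)+b(b-c)$, the telescoped forms $\tfrac{b-a}{d} + \tfrac{c(a-c)}{bd}$ and its mirror) and the elementary inequalities $1-(1-\delta)^2 < 2\delta$, $1-(1+\delta)^{-2} < 2\delta$, $c/d \le c/b < \delta$; all are right, and your three cases do cover all configurations. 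Your argument is longer and more bookkeeping-heavy, but it is self-contained, yields the exact value of the norm in each regime (showing the bound $3\delta$ is not tight in the case $|a-b|<c$, where you get $2\delta$), and correctly isolates the one asymmetry in the hypotheses --- that $\mu\sub{B}(A)<\delta$ controls $c/b$ rather than $c/a$ --- which the paper's route absorbs into the slightly lopsided interval $\bigl((1-\delta)^2, \tfrac{(1+\delta)^2}{1-\delta}\bigr)$ for $r$. The paper's proof buys brevity and reuses its standing formula $\norm*{\mu\sub{A}-\mu\sub{B}}_1 = 2\mfrac{|B\setminus A|}{|B|}$ for nested sets; either proof would serve.
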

\begin{proof}
Let $B' = B\setminus A$ and $r = \mfrac{|A|}{|B'|} = \mfrac{|A|}{|B|} \cdot \mfrac{|B|}{|B'|}$.
Since \(1\geq \mfrac{|B'|}{|B|} = \mfrac{|B| - |B\cap A|}{|B|} = 1 - \mu\sub{B}(A) > 1 - \delta\),
we see \( r\in \bigl((1-\delta)^2, \frac{(1 + \delta)^2}{1 - \delta}\bigr)\).
In these terms,
\(\mu\sub{A\cup B} = \mu\sub{A \cup B'}
	= \mfrac{|A|}{|A|+|B'|} \mu\sub{A} + \mfrac{|B'|}{|A|+|B'|}\mu\sub{B'}
	= \paren*{\mfrac{1}{1+r^{-1}}}\mu\sub{A} + \paren*{\mfrac{1}{1 + r}} \mu\sub{B'}\).
Now we can compute
\(\norm*{ \mu\sub{A\cup B} - \frac12 \big(\mu\sub{A} + \mu\sub{B'}\big) }_1
	\leq \abs*{ \mfrac12 - \mfrac{1}{1 + r^{-1}} } \cdot \norm*{\mu\sub{A}}_1
		+ \abs*{\mfrac12 - \mfrac{1}{1 + r} } \cdot \norm*{\mu\sub{B'}}
	= \abs*{\mfrac{1-r}{1+r}} < 2\delta\).
On the other hand,
\(\big\|\mu\sub{B} - \mu\sub{B'}\big\|_1
	= 2\mfrac{|B \setminus B'|}{|B|}
	< 2\delta\),
so \(\norm*{\frac12 \paren*{\mu\sub{A} + \mu\sub{B'}}
	- \frac12 \paren*{\mu\sub{A} + \mu\sub{B}}}_1 < \delta\).
The result follows by the triangle inequality.
\end{proof}

\begin{Lemma}\label{Lemma Invariance of Union}
Suppose $A,B \in \Cp(K, \delta)$. Then $A\cup B\in \Cp(K, 2\delta)$.
If $A\cap B = \varnothing$, then $A\cup B\in \Cp(K, \delta)$.
\end{Lemma}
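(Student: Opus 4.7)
The plan is direct: bound the symmetric difference of the union by the sum of the symmetric differences and then compare against $|A\cup B|$. Fix $x\in K$. Since left-translation distributes over union, $x(A\cup B) = xA \cup xB$, and a routine set-theoretic check shows
\[
x(A\cup B) \Sdiff (A\cup B) \subset (xA \Sdiff A) \cup (xB \Sdiff B).
\]
Indeed, any $y$ in the left-hand side lies in exactly one of $A\cup B$ and $xA\cup xB$; splitting on whether the witnessing membership comes from the $A$-part or the $B$-part places $y$ in $xA\Sdiff A$ or $xB\Sdiff B$.

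Applying the $(K,\delta)$-invariance hypothesis to $A$ and $B$ separately gives
\[
|x(A\cup B) \Sdiff (A\cup B)| < \delta |A| + \delta |B|.
\]
For the first statement, note that $|A|, |B| \leq |A\cup B|$, so $\delta(|A|+|B|) \leq 2\delta|A\cup B|$, which shows $A\cup B \in \Cp(K, 2\delta)$ (compactness and positive measure are immediate). For the second statement, when $A\cap B = \varnothing$ we have $|A|+|B| = |A\cup B|$ exactly, yielding the sharper bound $\delta|A\cup B|$ and hence $A\cup B \in \Cp(K,\delta)$.

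There is no real obstacle here; the only subtlety is the containment of symmetric differences, which must be spelled out because $\Sdiff$ does not distribute over $\cup$ as an equality — the loss of sharpness in that containment is precisely what forces the factor of $2$ to appear when $A$ and $B$ overlap.
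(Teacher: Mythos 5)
Your proof is correct and follows essentially the same route as the paper: the containment $x(A\cup B)\Sdiff(A\cup B)\subset(xA\Sdiff A)\cup(xB\Sdiff B)$, the bound $\delta|A|+\delta|B|$, and the observation that disjointness turns $|A|+|B|\leq 2|A\cup B|$ into the exact equality $|A|+|B|=|A\cup B|$. Nothing to change.
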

\begin{proof}
Pick \(x\in K\).
Notice $x(A\cup B) \Sdiff (A \cup B) \subset (xA \Sdiff A) \cup (xB \Sdiff B)$.
\\It follows that \(\mfrac{|x(A\cup B) \Sdiff (A\cup B)|}{|A\cup B|}
	\leq \mfrac{|xA\Sdiff A|}{|A\cup B|} + \mfrac{|xB\Sdiff B|}{|A\cup B|}
	< \mfrac{\delta|A|}{|A\cup B|} + \mfrac{\delta|B|}{|A\cup B|}
	< 2\delta\).
\\If \(A\cap B = \varnothing\),
then \(|A\cup B| = |A| + |B|\),
hence $\mfrac{\delta|A|}{|A\cup B|} + \mfrac{\delta|B|}{|A\cup B|} = \delta$.
\end{proof}

\begin{Theorem}\label{Theorem Hindman and Strauss Technique}
Let \(G\) be noncompact.
Then the following statement implies \(\TLIM_0(G) = \TLIM(G)\):
\begin{center}
	\(\paren*{\forall (p, K, \epsilon)}\ 
	\paren*{\exists M > 0}\ 
	\paren*{\forall \mu\in\TLIM_0(G)}\ 
	\paren*{\forall \Pcal\ \text{with}\ \#\Pcal=p}\)
	\\\(\exists A\in\Cp(K,\epsilon)
	\text{ with } |A|/M \in (1-\epsilon, 1+\epsilon)
	\text{ and } \mu\sub{A}\in\Ncal(\mu, \Pcal, \epsilon)\).
\end{center}

The crux of the statement is that \(M\) is allowed to depend on
\((\#\Pcal, K, \epsilon)\), but not on \(\Pcal\) itself.
\end{Theorem}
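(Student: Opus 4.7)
By Lemma~\ref{Main Idea}, it suffices to show $\TLIM_0(G)$ is closed under averages: if $\mu,\nu \in \TLIM_0(G)$, then $\frac12(\mu+\nu) \in \TLIM_0(G)$. So fix such $\mu,\nu$ and a target $(\Pcal, K, \epsilon)$ with $\Pcal = \{E_1,\ldots, E_p\}$; I seek $C \in \Cp(K,\epsilon)$ with $\mu\sub{C} \in \Ncal\bigl(\frac12(\mu+\nu), \Pcal, \epsilon\bigr)$. The plan is to take $C = A \cup B$, where $A$ approximates $\mu$ and $B$ approximates $\nu$. Lemma~\ref{Lemma Mean Approximation} will give $\mu\sub{A\cup B} \approx \frac12(\mu\sub{A}+\mu\sub{B})$ provided $|A|/|B|$ is close to $1$ and $\mu\sub{B}(A)$ is small, and Lemma~\ref{Lemma Invariance of Union} will handle the invariance of $A\cup B$.

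The common-size condition is exactly what the hypothesis delivers: applying it at level $(p+1, K, \delta)$ produces one scale $M$ that governs the sizes of both $A$ and $B$ simultaneously. The approximate-disjointness condition is the main obstacle, and it is resolved by noncompactness. Since $A$ will be compact, $\kappa(A) = 1$; since $G$ is noncompact, $\kappa(G) \geq \aleph_0$; so Lemma~\ref{Lemma if Kappa S is small then measure 0} forces $\nu(A) = 0$. The trick is then to couple the two choices: select $B$ using a partition that has already been refined to include $A$ as one of its cells. Such a $B$ will automatically satisfy $\mu\sub{B}(A) < \delta$.

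Concretely, fix $\delta$ a small positive constant multiple of $\epsilon$ (to be pinned down at the end). First apply the hypothesis to $\mu$ at level $(p+1, K, \delta)$, using $\Pcal$ padded to $p+1$ cells by splitting one cell into two pieces of positive measure; this produces $A \in \Cp(K, \delta)$ with $|A|/M \in (1-\delta, 1+\delta)$ and $\mu\sub{A} \in \Ncal(\mu, \Pcal, 2\delta)$. Next, form the refined partition $\Pcal_A = \{E_1 \setminus A, \ldots, E_p \setminus A, A\}$, which has exactly $p+1$ cells, and apply the hypothesis to $\nu$ at level $(p+1, K, \delta)$ with $\Pcal_A$. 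This produces $B \in \Cp(K, \delta)$ with $|B|/M \in (1-\delta, 1+\delta)$ and $\mu\sub{B} \in \Ncal(\nu, \Pcal_A, \delta)$. Since $\nu(A) = 0$, this yields both $\mu\sub{B}(A) < \delta$ and, cell by cell, $\mu\sub{B} \in \Ncal(\nu, \Pcal, 2\delta)$.

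The ratio $|A|/|B|$ then lies in $\bigl(\tfrac{1-\delta}{1+\delta}, \tfrac{1+\delta}{1-\delta}\bigr)$, so Lemma~\ref{Lemma Mean Approximation} (with $\delta$ replaced by a small constant multiple) gives $\bigl\|\mu\sub{A\cup B} - \tfrac12(\mu\sub{A}+\mu\sub{B})\bigr\|_1 = O(\delta)$. Combining this with the individual approximations of $\mu$ and $\nu$ on $\Pcal$ via the triangle inequality places $\mu\sub{A\cup B}$ in $\Ncal\bigl(\tfrac12(\mu+\nu), \Pcal, O(\delta)\bigr)$, and Lemma~\ref{Lemma Invariance of Union} gives $A \cup B \in \Cp(K, 2\delta)$. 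Picking $\delta$ small enough absorbs the hidden constants into $\epsilon$, and $C = A \cup B$ is the required witness. The delicate step is the coupling of $B$ to $A$ via partition refinement, which forces approximate disjointness from noncompactness alone; the rest is error bookkeeping.
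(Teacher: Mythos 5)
Your proposal is correct and follows essentially the same route as the paper's proof: reduce to midpoint-closedness via Lemma~\ref{Main Idea}, use the common scale $M$ at level $p+1$ for both $A$ and $B$, couple $B$ to $A$ through the refined partition $\Pcal_A$ so that $\nu(A)=0$ (from Lemma~\ref{Lemma if Kappa S is small then measure 0} and noncompactness) forces $\mu\sub{B}(A)<\delta$, then finish with Lemmas~\ref{Lemma Mean Approximation} and~\ref{Lemma Invariance of Union}. The only differences are cosmetic: the paper pads $\Pcal$ with an empty cell rather than splitting one, and tracks the constant explicitly as $\delta=\epsilon/4$ where you leave it as $O(\delta)$.
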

\begin{proof}
Pick $(\Pcal, K, \epsilon)$ and $\mu,\nu \in \TLIM_0(G)$.
Say \(\Pcal = \paren*{E_1, \hdots, E_p}\) and \(\delta = \epsilon/4\).
By hypothesis, obtain \(M>0\) for \( (p+1, K, \delta)\).
Let \(\Pcal_{\varnothing} = \paren*{E_1, \hdots, E_p, \varnothing}\),
	so \(\#\Pcal_\varnothing = p+1\).
Pick \(A\in\Cp(K,\delta)\) with \(\mu\sub{A}\in\Ncal(\mu, \Pcal_\varnothing, \delta)\)
	and \(|A|/M \in (1-\delta, 1+\delta)\).
Let \(\Pcal_A = (E_1 \setminus A, \hdots, E_p \setminus A, A)\).
Pick \(B\in\Cp(K,\delta)\)
	with \(\mu\sub{B} \in \Ncal\paren{\nu, \Pcal_A, \delta}\)
	and \(|B|/M\in (1-\delta, 1+\delta)\).
Now \(|A| / |B| \in \bigl((1-\delta)^2, (1+\delta)^2\bigr)\).
Since \(A\) is compact but \(G\) is not,
Lemma~\ref{Lemma if Kappa S is small then measure 0} tells us
\(\nu(A) = 0\), hence \(\mu\sub{B}(A) < \delta\).
Applying Lemma~\ref{Lemma Mean Approximation} to the previous two statements, we conclude
\(\norm*{\frac12\paren*{\mu\sub{A} + \mu\sub{B}} - \mu\sub{A\cup B}}_1 < 3\delta\).
By the triangle inequality,
\(\mu\sub{A\cup B} \in \Ncal\big(\frac12(\mu+\nu), \Pcal, 4\delta\big)\).
By Lemma~\ref{Lemma Invariance of Union}, \(A\cup B \in \Cp(K,2\delta)\).
We conclude \(\frac12(\mu + \nu)\in\TLIM_0(G)\).
By Lemma~\ref{Main Idea}, we are done.
\end{proof}

\section{Ornstein-Weiss quasi-tiling}

\begin{point}\label{Ci definitions}
We require the following notions of \((K,\epsilon)\)-invariance.
\\\phantom{ }
	\(\Cp_0(K,\epsilon) = \set*{A\subset G :
	A\text{ is compact and }\ (\forall x\in K)\ |xA\Sdiff A| / |A| < \epsilon}\).
This is just \(\Cp(K,\epsilon)\) above.
\\\phantom{ }
\(\Cp_1(K,\epsilon) = \{A\subset G : A \text{ is compact and }
	|KA \Sdiff A| / |A| < \epsilon\}\).
\\\phantom{ }
	\(\Cp_2(K,\epsilon) = \set*{A\subset G : A \text{ is compact and }
	|\partial_K(A)| / |A| < \epsilon}\),
	where \(\partial_K(A) = K A \setminus \bigcap_{x\in K} x A\).
\\This differs from \cite{OW87},
where ``the \(K\)-boundary of \(A\)''
is defined as \(K^{-1} A \setminus \bigcap_{x\in K^{-1}} x A\).

For \(j\in\set{0,1,2}\), define
\begin{center}
\(\TLIM_j(G) = \big\{
	\mu\in\TLIM(G) : \paren*{\forall \paren*{\Pcal, K, \epsilon}}\ 
	\big(\exists A\in\Cp_j(K,\epsilon) \big)\ 
	\mu\sub{A} \in \Ncal(\mu, \Pcal, \epsilon)\big\}\).
\end{center}

We shall say ``\(\Cp_i\) is asymptotically contained in \(\Cp_j\)'' to mean the following:
\begin{center}
\(\paren*{\forall(K,\epsilon)}\
	\paren*{\exists(K',\epsilon')}\
	\paren*{\forall A\in\Cp_i(K', \epsilon')}\
	\exists B\in\Cp_j(K,\epsilon) \text{ with } \norm*{\mu\sub{A}-\mu\sub{B}}_1 < \epsilon\).
\end{center}
\end{point}

\begin{Lemma}
To prove \(\TLIM_i(G) \subset \TLIM_j(G)\), it suffices to show
\(\Cp_i\) is asymptotically contained in \(\Cp_j\).
\end{Lemma}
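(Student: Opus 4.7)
The plan is a direct triangle-inequality argument, threading the parameters $(K',\epsilon')$ supplied by the asymptotic containment hypothesis through the definition of $\TLIM_i(G)$.

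Fix $\mu \in \TLIM_i(G)$; we must verify the membership condition for $\TLIM_j(G)$. Given a triple $(\Pcal, K, \epsilon)$, set $\delta = \epsilon / 2$. Apply the asymptotic containment to the pair $(K, \delta)$ to obtain $(K', \epsilon')$ such that every $A \in \Cp_i(K', \epsilon')$ admits some $B \in \Cp_j(K, \delta)$ with $\norm{\mu\sub{A} - \mu\sub{B}}_1 < \delta$. Now let $\epsilon'' = \min(\epsilon', \delta)$, and use $\mu \in \TLIM_i(G)$ to pick $A \in \Cp_i(K', \epsilon'')$ with $\mu\sub{A} \in \Ncal(\mu, \Pcal, \epsilon'')$.

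Since $\epsilon'' \leq \epsilon'$, we have $A \in \Cp_i(K', \epsilon')$, so the asymptotic containment furnishes $B \in \Cp_j(K, \delta) \subset \Cp_j(K, \epsilon)$ with $\norm{\mu\sub{A} - \mu\sub{B}}_1 < \delta$. For any $E \in \Pcal$, since $\chi_E$ has sup norm $1$, we get $\abs{\mu\sub{A}(E) - \mu\sub{B}(E)} \leq \norm{\mu\sub{A} - \mu\sub{B}}_1 < \delta$. Combined with $\abs{\mu(E) - \mu\sub{A}(E)} < \epsilon'' \leq \delta$, the triangle inequality yields $\abs{\mu(E) - \mu\sub{B}(E)} < 2\delta = \epsilon$, so $\mu\sub{B} \in \Ncal(\mu, \Pcal, \epsilon)$. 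This confirms $\mu \in \TLIM_j(G)$.

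The only subtlety is epsilon-bookkeeping: one must demand that the witness $A$ be slightly more invariant than strictly necessary (namely $\epsilon''$-invariant rather than merely $\epsilon'$-invariant) so that the triangle inequality in $w^*$-space has room to close. There is no essential obstacle here, because the $\Ncal(\mu,\Pcal,\epsilon)$ neighborhoods in Lemma~\ref{Lemma Partition Basis} measure closeness via indicator-function values, and these are dominated by $\ell_1$-distance; this is precisely what the asymptotic containment was designed to control.
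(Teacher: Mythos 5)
Your proof is correct and follows essentially the same route as the paper's: obtain $(K',\epsilon')$ from the asymptotic containment, use $\mu\in\TLIM_i(G)$ to produce a witness $A$, pass to $B$ via the containment, and close with the triangle inequality. The only difference is cosmetic — you halve $\epsilon$ up front to land exactly in $\Ncal(\mu,\Pcal,\epsilon)$, whereas the paper accepts $\Ncal(\mu,\Pcal,2\epsilon)$ and appeals to ``since $\epsilon$ was arbitrary.''
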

\begin{proof}
Pick \((\Pcal, K, \epsilon)\) and \(\mu\in\TLIM_i(G)\).
By hypothesis, obtain \((K', \epsilon')\).
Obtain \(A\in\Cp_i(K', \epsilon')\) with \(\mu\sub{A}\in\Ncal(\mu, \Pcal, \epsilon)\).
Obtain \(B\in\Cp_j(K, \epsilon)\) with \(\norm*{\mu\sub{A} - \mu\sub{B}}_1 < \epsilon\).
By the triangle inequality, \(\mu\sub{B}\in\Ncal(\mu, \Pcal, 2\epsilon)\).
Since \(\epsilon\) was arbitrary, we conclude \(\mu\in\TLIM_j(G)\).
\end{proof}

\begin{Lemma}\label{Lemma no TLIMi left behind}
\(\TLIM_0(G) = \TLIM_1(G) = \TLIM_2(G).\)
\end{Lemma}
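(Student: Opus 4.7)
The plan is to invoke the preceding lemma and establish the three asymptotic containments $\Cp_2 \subset \Cp_1$, $\Cp_1 \subset \Cp_0$, and $\Cp_0 \subset \Cp_2$, which together collapse the chain $\TLIM_2 \subset \TLIM_1 \subset \TLIM_0 \subset \TLIM_2$ to equality. For the first two containments I can take $B = A$ after a mild parameter change; the third is the substantive step and is where Ornstein--Weiss quasi-tiling enters.

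For $\Cp_2 \subset \Cp_1$, set $K' = K \cup \{e\}$, $\epsilon' = \epsilon$, and $B = A$. Because $e \in K'$, $A \subset K'A$ and $\bigcap_{x \in K'}xA \subset A$, so $KA \Sdiff A \subset K'A \setminus A \subset \partial_{K'}(A)$ has measure less than $\epsilon|A|$. For $\Cp_1 \subset \Cp_0$, left-invariance of Haar measure gives $|xA \setminus A| = |A| - |xA \cap A| = |A \setminus xA|$, so $|xA \Sdiff A| = 2|xA \setminus A| \le 2|KA \setminus A| \le 2|KA \Sdiff A|$ for each $x \in K$. Thus $\Cp_1(K, \epsilon/2) \subset \Cp_0(K, \epsilon)$, again with $B = A$.

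The substantive step is $\Cp_0 \subset \Cp_2$. For finite $K$ containing $e$, a direct union bound $|\partial_K A| \le |KA \setminus A| + |A \setminus \bigcap_{x \in K}xA| \le \sum_{x \in K}\bigl(|xA\setminus A| + |A\setminus xA|\bigr) < |K|\epsilon'|A|$ gives $\Cp_0(K \cup \{e\}, \epsilon/(|K|+1)) \subset \Cp_2(K, \epsilon)$ with $B = A$. For general compact $K$ this collapses because $K$ is uncountable, and the plan is Ornstein--Weiss quasi-tiling. First fix a compact $(K, \epsilon/3)$-invariant tile $T \in \Cp_2$, which exists by the topological F\o{}lner theorem since $G$ is amenable. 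For $A \in \Cp_0(K', \epsilon')$ sufficiently invariant (with $K', \epsilon'$ built from $T, K, \epsilon$), quasi-tile a near-full-measure sub-region of $A$ by disjoint translates $c_1 T, \ldots, c_m T \subset A$, and set $B = \bigsqcup c_i T$. A direct computation $\partial_K(A_1 \cup A_2) \subset \partial_K A_1 \cup \partial_K A_2$ yields the $\Cp_2$-analogue of Lemma~\ref{Lemma Invariance of Union}: a disjoint union of $\Cp_2(K, \delta)$-invariant sets is itself $\Cp_2(K, \delta)$-invariant. This places $B$ in $\Cp_2(K, \epsilon/3)$, and the quasi-tiling ensures $B \subset A$ with $|A \setminus B| < \epsilon|A|$, yielding $\|\mu\sub{A} - \mu\sub{B}\|_1 = 2|A \setminus B|/|A| < 2\epsilon$.

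The main obstacle is arranging the quasi-tiling uniformly over $A$, with $(K', \epsilon')$ chosen independently of $A$. This is the classical greedy Ornstein--Weiss construction: one inductively selects translates of $T$ to lie inside $A$ and be disjoint from those already chosen, invoking the $\Cp_0$-hypothesis on $A$ to guarantee a valid next choice until the covered fraction reaches $1 - \epsilon$. In the non-unimodular case, one takes right-translates $T c_i$ instead so that all tiles have comparable measure.
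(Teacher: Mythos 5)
Your two easy containments are essentially the paper's (it asserts $\Cp_2(K,\epsilon)\subset\Cp_1(K,\epsilon)$ outright and proves $\Cp_1(K,\epsilon)\subset\Cp_0(K,2\epsilon)$ by the same computation you give; your intermediate inclusion $KA\Sdiff A\subset K'A\setminus A$ is not literally true because of $A\setminus KA$, but that set does land in $\partial_{K'}(A)$, so no harm done). The gap is in the substantive step, where the paper does \emph{not} use Ornstein--Weiss: it quotes \cite[Theorem 15]{EmersionRatio} for ``$\Cp_0$ asymptotically contained in $\Cp_1$,'' a result valid for all locally compact amenable groups, and then passes from $\Cp_1$ to $\Cp_2$ by the one-line thickening $B=JA$ with $J=K\cup K^{-1}\cup\{e\}$, checking $\partial_J(JA)\subset J^2A\setminus A$. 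Your quasi-tiling route fails first and decisively on unimodularity: Lemma~\ref{Ornstein Weiss Lemma} is only available for unimodular $G$, yet the present lemma must hold for non-unimodular groups --- Theorem~\ref{Theorem Non-Unimodular} extracts a set in $\Cp_1(K,\epsilon)$ from the assumption $m\in\TLIM_0(G)$ for a $\sigma$-compact non-unimodular group, which is exactly the $\TLIM_0\subset\TLIM_1$ inclusion you are routing through Ornstein--Weiss. Your proposed fix is also backwards: right translates satisfy $|Tc_i|=|T|\,\Delta(c_i)$, so they are the ones with \emph{non}-comparable measures, while left translates $c_iT$ preserve measure but destroy left $K$-invariance, since $K(cT)$ bears no relation to $c(KT)$.

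Two further problems remain even for unimodular $G$. You cannot feed an arbitrary $A\in\Cp_0(K',\epsilon')$ into the quasi-tiling theorem: its hypothesis is the boundary condition $A\in\Cp_2(T_n^{-1}T_n,\epsilon')$, and a $\Cp_0$-invariant set need not be $\Cp_1$- or $\Cp_2$-invariant for any useful parameters (in $\Rbb$, adjoin to $[0,n]$ a scattering of $n$ short intervals of total length $1$: every $|xA\Sdiff A|/|A|$ stays small, but $|KA\setminus A|/|A|$ is of order $1$). Upgrading $\Cp_0$-invariance to boundary invariance of a \emph{nearby} set is precisely the content of Emerson's theorem, i.e.\ the step being proved, so invoking ``the $\Cp_0$-hypothesis on $A$'' inside the greedy construction is circular. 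Finally, Ornstein--Weiss does not produce disjoint translates of a single tile inside $A$; it produces $\epsilon$-disjoint translates $S_i$ of several tiles together with disjoint remnants $R_i\subset S_i$ with $|R_i|>(1-\epsilon)|S_i|$. A large subset of a $\Cp_2$-set need not be $\Cp_2$ (deleting a sliver from $S_i$ can make $\bigcup_{x\in K}x(S_i\setminus R_i)$ large), which is why Lemma~\ref{Lemma disjointified are still invariant} recovers only $\Cp_1$-invariance of the $R_i$; your ``disjoint union of $\Cp_2$ sets'' step therefore never applies to the sets the tiling actually hands you, and some version of the paper's thickening trick is still needed to land in $\Cp_2$.
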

\begin{proof}
Trivially, \(\Cp_2(K,\epsilon)\subset \Cp_1(K,\epsilon)\).
To prove \(\Cp_1(K, \epsilon)\subset \Cp_0(K, 2\epsilon)\), suppose \(x\in K\) and \(A\in\Cp_1(K,\epsilon)\).
Then \(|xA \Sdiff A|  = 2|xA \setminus A| \leq 2|KA \setminus A| < 2\epsilon|A|\).
\cite[Theorem 15]{EmersionRatio} shows that \(\Cp_0\) is asymptotically contained in \(\Cp_1\).
To prove \(\Cp_1\) is asymptotically contained in \(\Cp_2\),
pick \((K,\epsilon)\) and let \(J = K \cup K^{-1} \cup \set{e}\).
Suppose \(A\in\Cp_1\paren*{J^2, \epsilon}\), and let \(B = JA\).
Then \(A \subset \partial_{J}(B)\), and \(JB \setminus \partial_J(B) \subset J^2 A \setminus A\), which shows \(B\in\Cp_2(J, \epsilon) \subset \Cp_2(K, \epsilon)\).
Also, since \(B \subset J^2 A\), we see \(|B| < (1+\epsilon)|A|\), hence
\(\norm*{\mu\sub{A} - \mu\sub{B}}_1
	= 2\mfrac{|A\setminus B|}{|A|}
	< 2\epsilon\).
\end{proof}

\begin{Lemma}\label{Lemma many disjoint sets 2}
Suppose \(G\) is noncompact, and pick \((\Pcal, K, \epsilon)\) and $\mu\in\TLIM_0(G)$.
There exists a family of mutually disjoint sets \(\set*{B_n: n\in\Nbb} \subset \Cp_1(K,\epsilon)\)
with \(\set*{\mu\sub{B_n} : n\in\Nbb} \subset \Ncal(\mu, \Pcal, \epsilon)\).
\end{Lemma}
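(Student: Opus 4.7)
The plan is to mimic the transfinite construction in Lemma~\ref{Lemma many disjoint sets}, with two adaptations: I need only a countable family, and I want the sets to live in \(\Cp_1\) rather than \(\Cp_0\). The key enabling observation is that by Lemma~\ref{Lemma no TLIMi left behind}, \(\mu \in \TLIM_1(G)\), so at each stage of the induction I may approximate \(\mu\) by \(\mu\sub{A}\) for some \(A \in \Cp_1(K, \delta)\) with \(\delta\) small.

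First I would enlarge \(K\) to contain \(e\) (this only strengthens the conclusion), fix \(\delta = \epsilon/4\), and fix a precompact open neighborhood \(U\) of \(e\). Then I would build \(B_n\) inductively on \(n \in \Nbb\). Suppose \(B_0, \hdots, B_{n-1} \in \Cp_1(K, \epsilon)\) have been selected with \(\set*{B_i U : i < n}\) pairwise disjoint and \(\mu\sub{B_i} \in \Ncal(\mu, \Pcal, \epsilon)\) for every \(i\). Let \(C_n = \bigcup_{i < n} B_i U U^{-1}\). As a finite union of precompact sets, \(C_n\) is precompact, so \(\kappa(C_n) = 1 < \Nbb \leq \kappa(G)\) by noncompactness of \(G\); Lemma~\ref{Lemma if Kappa S is small then measure 0} then yields \(\mu(C_n) = 0\). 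Writing \(\Pcal = \set{E_1, \hdots, E_p}\), I would refine to \(\Pcal_n = (E_1 \setminus C_n, \hdots, E_p \setminus C_n, C_n)\), pick \(A \in \Cp_1(K, \delta)\) with \(\mu\sub{A} \in \Ncal(\mu, \Pcal_n, \delta)\), and set \(B_n = A \setminus C_n\).

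The remaining verifications split into two routine points and one mildly interesting one. The routine points, both essentially copied from Lemma~\ref{Lemma many disjoint sets}, are: disjointness of \(B_n U\) from each earlier \(B_i U\) (since \(B_n\) avoids \(\bigcup_{i<n} B_i U U^{-1}\) by construction), and \(\mu\sub{B_n} \in \Ncal(\mu, \Pcal, \epsilon)\) (since \(\mu\sub{A}(C_n) < \delta\) forces \(\norm{\mu\sub{A} - \mu\sub{B_n}}_1 < 2\delta\), whence the triangle inequality finishes). The one step that departs from the earlier lemma is showing \(B_n \in \Cp_1(K, \epsilon)\): since \(e \in K\), one has \(KB_n \Sdiff B_n = KB_n \setminus B_n \subset (KA \setminus A) \cup (A \cap C_n)\), and each piece has measure less than \(\delta \abs{A}\), while \(\abs{B_n} > (1-\delta)\abs{A}\). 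This gives \(\abs{KB_n \Sdiff B_n}/\abs{B_n} < 2\delta/(1-\delta) < 4\delta = \epsilon\), as required. I do not anticipate any substantive obstacle beyond this bookkeeping.
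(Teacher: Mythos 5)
Your proof is correct, but it takes a genuinely different route from the paper's. The paper proves this lemma as a corollary of Lemma~\ref{Lemma many disjoint sets}: it invokes \cite[Theorem 15]{EmersionRatio} to get $(K',\epsilon')$ such that every $A\in\Cp_0(K',\epsilon')$ has a companion $B\in\Cp_1(K,\epsilon)$ with $\norm*{\mu\sub{A}-\mu\sub{B}}_1<\epsilon/2$, takes the disjoint family $\set*{A_n}\subset\Cp_0(K',\epsilon')$ supplied by Lemma~\ref{Lemma many disjoint sets}, and replaces each $A_n$ by its companion $B_n$. You instead rerun the inductive construction of Lemma~\ref{Lemma many disjoint sets} natively in $\Cp_1$, using $\TLIM_0(G)=\TLIM_1(G)$ (Lemma~\ref{Lemma no TLIMi left behind}) to supply the approximating set $A\in\Cp_1(K,\delta)$ at each stage; your verification that $B_n=A\setminus C_n$ stays in $\Cp_1(K,\epsilon)$ via $KB_n\setminus B_n\subset(KA\setminus A)\cup(A\cap C_n)$ is the one new estimate, and it is sound (with $\delta=\epsilon/4$, $2\delta/(1-\delta)<\epsilon$). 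There is no circularity, since Lemma~\ref{Lemma no TLIMi left behind} does not depend on the present lemma. Your route is slightly longer but has a real advantage: it builds mutual disjointness (indeed disjointness of the $B_nU$) directly into the induction, whereas the paper's proof replaces each disjoint $A_n$ by a nearby $B_n$ and never verifies that the resulting $B_n$ are themselves mutually disjoint --- closeness of $\mu\sub{A_n}$ and $\mu\sub{B_n}$ in $\Lcal_1$-norm does not give this, so the paper's argument implicitly relies on the companion set $B_n$ staying inside a controlled neighborhood of $A_n$. Your version makes that issue moot.
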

\begin{proof}
By \cite[Theorem 15]{EmersionRatio}, obtain $(K', \epsilon')$ so
for each $A\in\Cp_0(K',\epsilon')$,
there exists \(B\in\Cp_1(K, \epsilon)\)
with \(\norm*{\mu\sub{A} - \mu\sub{B}}_1 < \epsilon/2\).
By Lemma~\ref{Lemma many disjoint sets},
choose a family of mutually disjoint sets \(\set*{A_n : n\in\Nbb}\subset\Cp_0(K', \epsilon')\)
with \(\set*{\mu\sub{A_n} : n\in\Nbb}\subset\Ncal\paren*{\mu,\Pcal, \epsilon/2}\).
For each \(n\), choose \(B_n \in \Cp_1(K, \epsilon)\)
with \(\norm*{\mu\sub{A_n} - \mu\sub{B_n}}_1 < \epsilon/2\).
By the triangle inequality, \(\mu\sub{B_n} \in \Ncal(\mu, \Pcal, \epsilon)\).
\end{proof}

\begin{Lemma}\label{Lemma A can be large}
Suppose $G$ is noncompact unimodular.
Pick $(\Pcal, K, \epsilon)$, $\mu\in\TLIM_0(G)$, and $M>0$.
Then there exists $B\in\Cp_2(K,\epsilon)$ with $|B| \geq M$
	and $\mu\sub{B}\in\Ncal(\mu,\Pcal,\epsilon)$.
\end{Lemma}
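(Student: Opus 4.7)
My strategy is to assemble $B$ as a finite disjoint union drawn from the family supplied by Lemma~\ref{Lemma many disjoint sets 2}, then pass from $\Cp_1$ to $\Cp_2$ as in the proof of Lemma~\ref{Lemma no TLIMi left behind}.

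First, let $J = K\cup K^{-1}\cup\set*{e}$ and fix $\delta = \epsilon/4$.  Applying Lemma~\ref{Lemma many disjoint sets 2} with parameters $(\Pcal, J^2, \delta)$, extract a mutually disjoint family $\set*{B_n : n\in\Nbb}\subset\Cp_1(J^2,\delta)$ with each $\mu\sub{B_n}\in\Ncal(\mu,\Pcal,\delta)$.  Set $B_* = B_1\cup\ldots\cup B_N$, with $N$ to be chosen shortly.  Disjointness gives $\abs{B_*} = \sum_{n=1}^N\abs{B_n}$, and the estimate $\abs{J^2 B_* \setminus B_*} \leq \sum\abs{J^2 B_n \setminus B_n} < \delta\abs{B_*}$ (mimicking Lemma~\ref{Lemma Invariance of Union}) shows $B_*\in\Cp_1(J^2,\delta)$.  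Moreover $\mu\sub{B_*} = \sum_n (\abs{B_n}/\abs{B_*})\,\mu\sub{B_n}$ is a convex combination of elements of $\Ncal(\mu,\Pcal,\delta)$, hence $\mu\sub{B_*}\in\Ncal(\mu,\Pcal,\delta)$.  Finally, take $B = J B_*$: following the $\Cp_1$-to-$\Cp_2$ construction in the proof of Lemma~\ref{Lemma no TLIMi left behind}, $B\in\Cp_2(J,\delta)\subset\Cp_2(K,\epsilon)$, $\abs{B}\geq\abs{B_*}$, and $\norm*{\mu\sub{B}-\mu\sub{B_*}}_1 < 2\delta$, so by the triangle inequality $\mu\sub{B}\in\Ncal(\mu,\Pcal,3\delta)\subset\Ncal(\mu,\Pcal,\epsilon)$.

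The remaining task --- and the main obstacle --- is choosing $N$ so that $\abs{B_*} = \sum_{n=1}^N\abs{B_n}\geq M$.  This requires the disjoint family extracted in the first step to have arbitrarily large partial sums, which is precisely where unimodularity is indispensable: by Theorem~\ref{Theorem Non-Unimodular}, no such enlargement is possible in the $\sigma$-compact non-unimodular case.  My expectation is that in the noncompact unimodular setting one can modify the transfinite construction inside the proof of Lemma~\ref{Lemma many disjoint sets} so that at each stage $\beta$ the chosen $A\in\Cp(K,\delta)$ additionally satisfies $\abs{A}\geq C_\beta$ for a prescribed sequence $C_\beta\to\infty$, which propagates to $\abs{A_\beta}\geq (1-\delta)C_\beta$ and thence through the $\Cp_0$-to-$\Cp_1$ conversion to a lower bound on $\abs{B_n}$.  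The availability of such large invariant $A$ should follow from unimodularity combined with noncompactness: disjoint translates of smaller invariant pieces can be grafted together without sacrificing either the $(K,\delta)$-invariance or the approximation of $\mu$ on the refined partition, although verifying that both constraints can be met simultaneously is the delicate point I expect to be the crux of the argument.
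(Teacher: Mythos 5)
Your assembly of $B$ — a finite disjoint union of members of the family from Lemma~\ref{Lemma many disjoint sets 2}, the convexity of $\Ncal(\mu,\Pcal,\delta)$, and the passage from $\Cp_1$ to $\Cp_2$ via $B = JB_*$ — is exactly the paper's construction and is correct as far as it goes. The problem is that the one step you defer, making $\sum_{n\leq N}\abs*{B_n}\geq M$, is the entire content of the lemma, and the route you sketch for it does not work. Membership of $\mu$ in $\TLIM_0(G)$ gives you, for each $(\Pcal,K,\epsilon)$, \emph{some} $A\in\Cp(K,\epsilon)$ approximating $\mu$; it gives no control whatsoever on $\abs*{A}$, so there is nothing in the transfinite construction of Lemma~\ref{Lemma many disjoint sets} that you can ``modify'' to additionally demand $\abs*{A}\geq C_\beta$ — you would have to manufacture large invariant sets yourself, which is circular (the natural way to do so is to take disjoint unions, i.e.\ the present lemma).

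The missing observation is much simpler and is where unimodularity actually enters: you may assume $\abs*{K}>0$ (enlarging $K$ only shrinks $\Cp_2(K,\epsilon)$, since $\partial_K(A)\subset\partial_{K'}(A)$ for $K\subset K'$), and then \emph{every} $(J^2,\delta)$-invariant set in the $\Cp_1$ sense is automatically large. Indeed, for $A\in\Cp_1(J^2,\delta)$ and any $a\in A$ one has $\abs*{J^2A}\leq(1+\delta)\abs*{A}$, while $\abs*{J^2A}\geq\abs*{J^2a}=\abs*{J^2}\,\Delta(a)=\abs*{J^2}$ by unimodularity; hence $\abs*{A}\geq(1+\delta)^{-1}\abs*{J^2}>0$. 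So the disjoint sets $B_n$ all have measure bounded below by a fixed positive constant, the partial sums diverge, and $N$ can simply be taken large enough that $\abs*{B_*}\geq M$. No modification of Lemma~\ref{Lemma many disjoint sets} is needed; with this one paragraph inserted, your argument closes.
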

\begin{proof}
Without loss of generality, assume \(|K| > 0\).
Let \(\delta = \epsilon/3\).
By Lemma~\ref{Lemma many disjoint sets 2}, choose disjoint sets
$\set*{A_n : n\in\Nbb} \subset \Cp_1\paren*{K^2,\epsilon}$
with \(\set*{\mu\sub{A_n} : n\in\Nbb}\subset\Ncal\paren*{\mu,\Pcal,\delta}\).
For each \(n\), $|A_n| > (1-\epsilon)\abs*{K^2 A_n} > \abs*{K^2}$,
where the second inequality holds because \(G\) is unimodular.
For sufficiently large \(m\), \(\abs*{A_1 \cup \hdots \cup A_m} \geq M\).
Define \(A = A_1 \cup \hdots \cup A_m\).
Since this union is disjoint, it is easy to check \(A\in \Cp_1(K^2, \epsilon)\).
Notice \(\mu\sub{A}= \mfrac{|A_1|}{|A|} \mu\sub{A_1}
	+  \hdots + \mfrac{|A_m|}{|A|}\mu\sub{A_m} \in \Ncal\paren*{\mu,\Pcal,\delta}\),
as the latter set is convex.
Finally, let $B = KA$.
It follows, as in the proof of Lemma~\ref{Lemma no TLIMi left behind},
that \(B\in\Cp_2(K, \epsilon)\) and \(\norm*{\mu\sub{A} - \mu\sub{B}}_1 < 2\delta\),
hence \(\mu\sub{B}\in\Ncal(\mu,\Pcal, 3\delta)\) by the triangle inequality.
\end{proof}

Recall that \(\sqcup\) denotes a disjoint union.
\begin{Definition}\label{Definition Quasitiling}
Let \(A\subset G\) and \(T_1 \subset \hdots \subset T_n \subset G\) have finite positive measure.
\(\Tcal = \set*{T_1, \hdots, T_n}\) is said to
\textbf{\(\bm{\epsilon}\)-quasi-tile \(\bm{A}\)} if there exists
\(R = S_1 \cup \hdots \cup S_N = R_1 \sqcup \hdots \sqcup R_N\) satisfying the following:
\\\phantom{ }
(1) For \(i\in\set{1, \hdots, N}\), \(S_i = Tx\) for some $T\in\Tcal$ and $x\in G$.
\\\phantom{ }
(2) For \(i\in\set{1, \hdots, N}\), \(R_i \subset S_i\)
	with \(\norm*{\mu\sub{R_i} - \mu\sub{S_i}}_1 < \epsilon\).
\\\phantom{ }
(3) \(\norm*{\mu\sub{A} - \mu\sub{R}}_1 < \epsilon\).
\\\cite{OW87} gives the following weaker conditions in place of (2) and (3):
\\\phantom{ }
\((2')\) For \(i\in\set{1, \hdots, N}\), \(R_i \subset S_i\)
	with \(|R_i| > (1-\epsilon) |S_i|\).
\\\phantom{ }
\((3')\) \(R\subset A\) with \(|R| > (1-\epsilon) |A|\).
\\Of course \((2')\) implies \(\norm*{ \mu\sub{S_i} - \mu\sub{R_i} }_1
	= 2\mfrac{|S_i \setminus R_i|}{|S_i|}
	< 2 \epsilon\),
and \((3')\) implies \(\norm*{ \mu\sub{A} - \mu\sub{R} }_1
	= 2 \mfrac{|A\setminus R|}{|A|}
	< 2\epsilon\).
Therefore the two definitions become equivalent
when we quantify over all \(\epsilon \in (0,1)\),
as in Lemma~\ref{Ornstein Weiss Lemma}.
\end{Definition}

\begin{Lemma}\label{Ornstein Weiss Lemma}
Let \(G\) be unimodular. Pick \((K,\epsilon)\).
Then there exists \(\epsilon' >0\) and \(\Tcal = \set*{T_1, \hdots, T_n} \subset\Cp_2(K,\epsilon)\)
such that \(\Tcal\) \(\epsilon\)-quasi-tiles any
\(A\in\Cp_2(T_{\mathrlap{n}}{\mathstrut}^{-1} T_n, \epsilon')\).
\end{Lemma}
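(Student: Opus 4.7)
The plan is to adapt the classical Ornstein--Weiss quasi-tiling construction \cite{OW87} to the locally compact unimodular setting, replacing counting arguments with Haar integrals.

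\textbf{Setup of the tiles.} Fix $\alpha \in (0, \epsilon/10)$ and choose an integer $n$ with $(1-\alpha)^n < \alpha$. Build $T_1 \subset T_2 \subset \cdots \subset T_n$ inductively using Lemma~\ref{Lemma A can be large} (applied to any $\mu\in\TLIM_0(G)$ and a trivial partition): pick $T_1 \in \Cp_2(K, \alpha)$, and given $T_i$ pick $T_{i+1} \in \Cp_2(K \cup T_i \cup T_i^{-1}, \alpha_{i+1})$ with $|T_{i+1}| \geq \alpha^{-1}|T_i|$, where $\alpha_{i+1} > 0$ is small enough that the $T_j$-boundary of $T_{i+1}$ has measure negligible compared to $|T_j|$ for every $j \leq i$. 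This cascade of rapidly-growing, increasingly-invariant tiles is the geometric backbone of the argument, and one may also insist $e\in T_n$ without loss of generality.

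\textbf{Greedy quasi-tiling.} Choose $\epsilon' > 0$ so small that every $A \in \Cp_2(T_n^{-1}T_n, \epsilon')$ satisfies $|\partial_{T_i^{-1}}(A)| < (\alpha/n)|A|$ for each $i\leq n$; this is possible because $T_i^{-1} \subset T_n^{-1}T_n$ (using $e\in T_n$), so monotonicity of the boundary operator gives $\partial_{T_i^{-1}}(A) \subset \partial_{T_n^{-1}T_n}(A)$. Set $A_i^{\circ} = \bigcap_{t\in T_i} t^{-1}A$, so $|A \setminus A_i^{\circ}| < (\alpha/n)|A|$. Now run the Ornstein--Weiss greedy procedure in stages $i = n, n-1, \ldots, 1$, maintaining a growing cover $C\subset A$ and a list of pairs $(S_j, R_j)$: at stage $i$, repeatedly pick any $x \in A_i^{\circ}$ with $|T_i x \cap C| < \alpha|T_i|$, append $S_j = T_i x$ and $R_j = T_i x \setminus C$ to the list, and update $C \gets C\cup R_j$; terminate the stage when no such $x$ remains.

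\textbf{Analysis and main obstacle.} Let $B_i = A\setminus C$ at the start of stage $i$, so $B_n = A$. Termination of stage $i$ forces $|T_i x \cap B_i| \leq (1-\alpha)|T_i|$ for every $x \in A_i^{\circ}$ (here $|T_i x| = |T_i|$ uses unimodularity). The unimodular Fubini identity $\int_G |T_i x \cap B_i|\,dx = |T_i|\cdot|B_i|$, split over $A_i^{\circ}$ and the complementary set $(T_i^{-1}A)\setminus A_i^{\circ} \subset \partial_{T_i^{-1}}(A)$ (on which we use the crude bound $|T_i x \cap B_i|\leq |T_i|$), yields $|B_{i-1}| \leq (1-\alpha)|B_i| + (\alpha/n)|A|$. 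Iterating $n$ times produces $|B_0| \leq (1-\alpha)^n|A| + \alpha|A| < 2\alpha|A|$. Thus the $R_j$'s cover all but $2\alpha|A|$ of $A$, giving condition $(3')$, while $|R_j| > (1-\alpha)|T_i|$ by construction, giving $(2')$; the remark after Definition~\ref{Definition Quasitiling} then converts these into the norm-based $(2)$ and $(3)$ after absorbing a factor of $2$ into $\alpha < \epsilon/10$. The main obstacle is the simultaneous bookkeeping: coordinating the tolerances $\alpha_i$, the enlargement factor $\alpha^{-1}$, and the single value $\epsilon'$ so that all three kinds of boundary error---of $T_j$ inside $T_{i+1}$, of $T_i$ inside $A$, and of $T_i$ on the running cover $C$---remain simultaneously controlled throughout the iteration.
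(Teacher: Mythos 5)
The paper does not reprove the quasi-tiling theorem: its proof of this lemma is a one-line citation to \cite[p.~30, Theorem 3 and Remark]{OW87}, together with a translation of notation and of the convention for $\partial_K(A)$. You have instead tried to reconstruct the Ornstein--Weiss argument, and the reconstruction has a genuine gap at its center: the recursive estimate $|B_{i-1}| \leq (1-\alpha)|B_i| + (\alpha/n)|A|$ does not follow from the argument you give. Two things go wrong. First, termination of stage $i$ only tells you that every $x\in A_i^{\circ}$ satisfies $|T_i x\cap C|\geq \alpha|T_i|$ for the cover $C$ at the \emph{end} of the stage, hence $|T_i x\cap B_{i-1}|\leq(1-\alpha)|T_i|$; the same bound with $B_i$ (the uncovered set at the \emph{start} of the stage) is false, since any $x$ that was eligible when the stage began violates it. Second, and more seriously, integrating the correct bound gives $|T_i|\,|B_{i-1}| \leq (1-\alpha)|T_i|\cdot|A_i^{\circ}\cap T_i^{-1}B_{i-1}| + |T_i|\,|\partial_{T_i^{-1}}(A)|$, and $|A_i^{\circ}\cap T_i^{-1}B_{i-1}|$ is only bounded by $|A|$, not by $|B_i|$: a thin residual set $B_{i-1}$ can have $T_i^{-1}B_{i-1}$ of measure comparable to $|A|$. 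So what you actually obtain is $|B_{i-1}|\leq(1-\alpha)|A| + (\alpha/n)|A|$, which does not iterate to geometric decay. Worse, your selection rule ``pick $x$ with $|T_i x\cap C| < \alpha|T_i|$'' combined with maximality yields, by the same integration, only $|C|\geq\alpha|A_i^{\circ}|$ at the end of every stage --- an $\alpha$-fraction covered with $\alpha<\epsilon/10$, which is the opposite of what condition $(3')$ requires.

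The missing idea is the one that drives Ornstein and Weiss's second covering lemma: at stage $i$ one selects translates $T_i x$ that lie \emph{inside the current uncovered set} $B$ (or meet it in proportion at least $1-\alpha$) and are $\alpha$-disjoint from one another. Then every selected tile contributes only new territory, maximality is a statement about centers $x$ with $T_i x\subset B$, and the measure of such centers is at least $|B| - |\partial_{T_i^{-1}}(A)| - |\partial_{T_i^{-1}}\bigl(\bigcup_{j>i}D_j\bigr)|$, where $D_j$ is the union of the stage-$j$ tiles. Integrating over these centers gives $|D_i|\geq\alpha|B| - (\text{boundary terms})$ and hence the closing recursion $|B_{i-1}|\leq(1-\alpha)|B_i| + (\text{boundary terms})$. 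Controlling $|\partial_{T_i^{-1}}(D_j)|$ uses subadditivity of $\partial$ over unions, unimodularity, and --- crucially --- the hypothesis that each $T_j$ with $j>i$ is highly invariant under $T_i^{-1}$: this is exactly where the nested invariance of your tower must enter. Your setup installs that nested invariance but your analysis never invokes it, which is the symptom of the gap. With the selection criterion changed as above and the boundary estimate for the already-placed tiles added, the rest of your write-up (the choice of $n$ with $(1-\alpha)^n<\alpha$, the choice of $\epsilon'$, and the passage from $(2'),(3')$ to $(2),(3)$ via the remark after Definition~\ref{Definition Quasitiling}) goes through; alternatively, the paper's route of simply citing \cite{OW87} avoids the issue entirely.
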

\begin{proof}
This is \cite[p.\ 30, Theorem 3 and Remark]{OW87},
although Ornstein and Weiss write \(\set*{F_1, \hdots, F_K}\)
instead of \(\set*{T_1, \hdots, T_n}\),
and invert the definition of \(\partial_K(A)\) as described in \ref{Ci definitions}.
\end{proof}

\begin{Lemma}\label{Lemma disjointified are still invariant}
If \(S_i \in \Cp_1(K,\epsilon)\)
and \(R_i \subset S_i\) with \(\norm*{\mu\sub{R_i} - \mu\sub{S_i}}_1 < \epsilon\),
then \(R_i \in \Cp_1(K,3\epsilon)\).
\end{Lemma}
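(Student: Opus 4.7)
The plan is a direct symmetric-difference estimate, unpacking the two hypotheses into measure inequalities and then bounding $|KR_i \Sdiff R_i|$ in terms of $|S_i|$.

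First, I would extract what $\norm*{\mu\sub{R_i} - \mu\sub{S_i}}_1 < \epsilon$ combined with $R_i \subset S_i$ gives us. Using the formula for $\norm*{\mu\sub{A} - \mu\sub{B}}_1$ when $A\subset B$ (recorded in the earlier ``point'' of the paper), this condition is equivalent to $2\mfrac{|S_i\setminus R_i|}{|S_i|} < \epsilon$, i.e.\ $|S_i \setminus R_i| < \mfrac{\epsilon}{2}|S_i|$ and hence $|R_i| > (1-\mfrac{\epsilon}{2})|S_i|$.

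Second, I would bound $|KR_i \Sdiff R_i|$. Since $R_i \subset S_i$, also $KR_i \subset KS_i$, and
\[
KR_i \setminus R_i \;\subset\; KS_i \setminus R_i \;=\; (KS_i \setminus S_i) \cup (S_i \setminus R_i).
\]
Using $S_i \in \Cp_1(K,\epsilon)$ to get $|KS_i \setminus S_i| \leq |KS_i \Sdiff S_i| < \epsilon |S_i|$, and the inequality just derived for $|S_i \setminus R_i|$, this yields $|KR_i \setminus R_i| < \epsilon|S_i| + \mfrac{\epsilon}{2}|S_i| = \mfrac{3\epsilon}{2}|S_i|$. Assuming $e \in K$ (the setting in which this lemma is applied, and which can always be arranged by enlarging $K$), one has $R_i \subset KR_i$, so $R_i \setminus KR_i = \varnothing$ and $|KR_i \Sdiff R_i| = |KR_i \setminus R_i|$.

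Third, dividing by $|R_i| > (1 - \mfrac{\epsilon}{2})|S_i|$ gives
\[
\frac{|KR_i \Sdiff R_i|}{|R_i|} < \frac{3\epsilon/2}{1-\epsilon/2} < 3\epsilon \quad \text{for } \epsilon \in (0,1),
\]
which is exactly the condition $R_i \in \Cp_1(K, 3\epsilon)$. The proof is really just bookkeeping; the only delicate point is the treatment of $R_i \setminus KR_i$, which needs $e \in K$ to vanish. Without such an assumption the constant $3$ is not uniform, since $|K(S_i \setminus R_i)|$ cannot in general be controlled by $|S_i \setminus R_i|$; this is the only real obstacle, and it is sidestepped by the standing convention in the Ornstein--Weiss context that $K$ contains the identity.
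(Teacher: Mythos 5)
Your proof is correct and is essentially the paper's own argument: both decompose \(KR_i \setminus R_i \subset (KS_i\setminus S_i)\cup(S_i\setminus R_i)\), translate \(\norm*{\mu\sub{R_i}-\mu\sub{S_i}}_1<\epsilon\) into \(|R_i|>(1-\epsilon/2)|S_i|\), and divide to get a bound strictly below \(3\epsilon\). Your remark about \(R_i\setminus KR_i\) needing \(e\in K\) to vanish is a point the paper's proof silently elides (it only estimates \(|KR_i\setminus R_i|/|R_i|\) even though \(\Cp_1\) is defined via the symmetric difference), so you are if anything slightly more careful.
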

\begin{proof}
\(\mfrac{|K R_i \setminus R_i|}{|R_i|}
	\leq \mfrac{|S_i|}{|R_i|}\cdot \mfrac{|KS_i \setminus S_i|}{|S_i|}
		+ \mfrac{|S_i \setminus R_i|}{|R_i|}
	< (1+\epsilon)\cdot\epsilon + \norm*{ \mu\sub{R_i}  - \mu\sub{S_i}}_1
	< 3\epsilon\).
\end{proof}

\begin{Theorem}\label{Theorem Unimodular Case}
If $G$ is noncompact unimodular, $\TLIM_0(G) = \TLIM(G)$.
\end{Theorem}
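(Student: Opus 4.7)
The plan is to apply Theorem~\ref{Theorem Hindman and Strauss Technique}: fixing $(p, K, \epsilon)$, I must exhibit a single $M>0$ depending only on $(p, K, \epsilon)$ such that every $\mu\in\TLIM_0(G)$ and every partition $\Pcal$ with $\#\Pcal = p$ admit some $A\in\Cp(K,\epsilon)$ with $|A|/M\in(1-\epsilon, 1+\epsilon)$ and $\mu\sub{A}\in\Ncal(\mu,\Pcal,\epsilon)$. The entire challenge is that $M$ must be pinned down \emph{before} $\mu$ and $\Pcal$ are revealed.

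To set up the tiles, pick a small tolerance $\delta$ depending on $p$ and $\epsilon$ and apply Lemma~\ref{Ornstein Weiss Lemma} to produce a quasi-tiling family $\Tcal = \{T_1\subset\cdots\subset T_n\}\subset \Cp_2(K,\delta)$ and threshold $\epsilon'>0$. Both $\Tcal$ and $\epsilon'$ depend only on $(K, \epsilon, p)$. I set $M$ to be a large multiple of $|T_n|$, specifically $M$ comparable to $|T_n|\,\epsilon^{-2}\log p$; the exact constant is dictated by a concentration bound in the last step.

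Given $\mu$ and $\Pcal$, use Lemma~\ref{Lemma A can be large} to produce $A\in\Cp_2(T_n^{-1} T_n, \epsilon')$ with $|A|\gg M$ and $\mu\sub{A}\in\Ncal(\mu,\Pcal,\delta)$, then apply Ornstein-Weiss to quasi-tile it: $A\approx R_1\sqcup\cdots\sqcup R_N$ with each $R_i\subset T_{k_i}x_i$ satisfying $\norm{\mu\sub{R_i} - \mu\sub{T_{k_i}x_i}}_1 < \delta$. Lemmas~\ref{Lemma disjointified are still invariant} and~\ref{Lemma Invariance of Union} ensure that each $R_i$, and any disjoint sub-union of them, lies in $\Cp(K, \epsilon)$ provided $\delta$ is small enough.

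The main step is to choose $I\subset\{1,\ldots,N\}$ so that $A' := \bigsqcup_{i\in I}R_i$ satisfies $|A'|\approx M$ and $\mu\sub{A'}\approx \mu\sub{A}$ on $\Pcal$. The quasi-tiling gives $\mu\sub{A}(E)\approx \sum_i \pi_i \mu\sub{R_i}(E)$ with $\pi_i = |R_i|/|A|$. My proposal is to include each $i$ independently with probability $M/|A|$, so that the expected values of $|A'|$ and of each $|A'\cap E|$ are $M$ and $M\cdot\mu\sub{A}(E)$ respectively. Hoeffding-type concentration, combined with a union bound over the $p$ elements of $\Pcal$, yields a realization with $|A'|/M\in(1-\epsilon,1+\epsilon)$ and $\mu\sub{A'}\in\Ncal(\mu,\Pcal,\epsilon)$, provided the expected tile count $\sim M/|T_n|$ dominates $\epsilon^{-2}\log p$. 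The main obstacle is exactly this tension: $M$ must be large enough in terms of $|T_n|, p, \epsilon$ for the concentration argument to push deviations below $\epsilon$, yet must be chosen before seeing $\mu$ or $\Pcal$. The resolution is $M\sim |T_n|\epsilon^{-2}\log p$, which works because $|T_n|$ depends only on $(K, \epsilon)$; noncompactness of $G$ is what makes Lemma~\ref{Lemma A can be large} provide $A$ with $|A|\gg M$, and unimodularity is what makes the right translates $T_{k_i}x_i$ have measure exactly $|T_{k_i}|$, so that the probabilistic size accounting is valid.
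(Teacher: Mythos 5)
Your architecture is exactly the paper's: reduce to Theorem~\ref{Theorem Hindman and Strauss Technique}, fix the Ornstein--Weiss tiles $\Tcal=\set{T_1,\hdots,T_n}$ and the constant $M$ in terms of $(p,K,\epsilon)$ alone, produce a large invariant set via Lemma~\ref{Lemma A can be large}, quasi-tile it by Lemma~\ref{Ornstein Weiss Lemma}, and extract a sub-union of tiles of measure about $M$ whose profile on $\Pcal$ is unchanged. Where you diverge is the extraction step. The paper does it deterministically: it fixes a finite $\epsilon$-dense set $D$ of possible profile vectors $\paren{m(E_1),\hdots,m(E_p)}$, sorts the tiles $R_i$ into classes according to which $d\in D$ approximates the vector of $\mu\sub{R_i}$, and inside each class greedily takes a maximal sub-collection whose total measure does not exceed $M$ times that class's share of $|R|$; since every tile has measure at most $|T_n|$ (this is where unimodularity enters) and $M=\#D\cdot|T_n|\cdot\epsilon^{-1}$, each class undershoots its target by less than $\epsilon/\#D$ and the errors sum to less than $\epsilon$. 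Your randomized selection replaces the $\epsilon$-net and the greedy step; once repaired it even gives a smaller constant, roughly $p\,|T_n|\epsilon^{-2}$ against the paper's $\#D\cdot|T_n|\cdot\epsilon^{-1}\sim|T_n|\epsilon^{-p}$, while the paper's version avoids probability entirely at the cost of the size of $D$.

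The one step that fails as written is the appeal to Hoeffding. With inclusion probability $q=M/|A|$, the sum $|A'|=\sum_i|R_i|\cdot\mathbf{1}[i\in I]$ has as many terms as there are tiles, and Lemma~\ref{Lemma A can be large} gives no upper bound on $|A|$; Hoeffding's exponent is $-2(\epsilon M)^2/\sum_i|R_i|^2$, and $\sum_i|R_i|^2\le|T_n|\,|R|$ can be arbitrarily large compared with $M^2$, so the bound is vacuous in exactly the generic case $|A|\gg M$. What saves the argument is the variance, not the range: $\operatorname{Var}\paren{|A'|}\le q\sum_i|R_i|^2\le M|T_n|$, and likewise for each $|A'\cap E_j|$, so already Chebyshev gives failure probability at most $|T_n|/(\epsilon^2M)$ per event; taking $M\ge 2(p+1)|T_n|\epsilon^{-2}$ and a union bound over the $p+1$ events yields a realization with $|A'|/M\in(1-\epsilon,1+\epsilon)$ and the profile of $\mu\sub{A'}$ within $O(\epsilon)$ of that of $\mu\sub{R}$. (Bernstein would recover your $\log p$ dependence, but it is not needed.) With that substitution, and the checks you already indicate via Lemmas~\ref{Lemma disjointified are still invariant} and~\ref{Lemma Invariance of Union} that disjoint sub-unions of the $R_i$ remain $(K,\epsilon)$-invariant, the proof goes through.
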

\begin{proof}
Pick \((\Pcal, K, \epsilon)\) and \(\mu\in\TLIM_0(G)\),
say \(\Pcal = \set*{E_1, \hdots, E_p}\).
Let \(V = \set*{x\in\Rbb^p : \|x\|_1 = 1}\).
For \(m\in\Mcal(G)\), let \(v(m) = \begin{bmatrix} m(E_1) & \hdots & m(E_p) \end{bmatrix} \in V\).
Thus \(m\in\Ncal(\mu, \Pcal,\epsilon)\) iff \(\norm*{v(m) - v(\mu)}_\infty < \epsilon\).
Let \(D\subset V\) be a finite \(\epsilon\)-dense subset, \ie\ 
for each \(v\in V\) there exists \(d\in D\) with \(\norm*{v-d}_\infty < \epsilon\).
By Lemma~\ref{Ornstein Weiss Lemma}, obtain
\(\Tcal = \set*{T_1, \hdots, T_n} \subset \Cp_2(K,\epsilon)\) and \(\epsilon' > 0\).
Let \(M = \# D\cdot|T_n| \cdot \epsilon^{-1}\).
Notice that \(M\) depends only on \((p, K, \epsilon)\).
By Lemma~\ref{Lemma A can be large},
pick \(B\in\Cp_2\paren{T_{\mathrlap{n}}{\mathstrut}^{-1} T_n,\epsilon'}\)
with \(|B| \geq M\) and \(\mu\sub{B}\in\Ncal(\mu,\Pcal,\epsilon)\).
The goal is to construct \(A\subset B\) such that \(A\in\Cp_1(K,3\epsilon)\),
\(|A|/M \in (1-\epsilon, 1]\), and
\(\norm*{v\paren*{\mu\sub{A}} - v\paren*{\mu\sub{B}}}_\infty < 5\epsilon\).
This implies \(\mu\sub{A}\in\Ncal(\mu,\Pcal,6\epsilon)\).
Since \(\epsilon\) was arbitrary, \(\TLIM_0(G) = \TLIM(G)\) will follow
by Theorem~\ref{Theorem Hindman and Strauss Technique}.

As in Definition~\ref{Definition Quasitiling},
let \(R = \bigsqcup_{i=1}^N R_i\) with \(\norm*{\mu\sub{A} - \mu\sub{R}}_1 < \epsilon\).
Clearly \(\norm*{v\paren*{\mu\sub{A}} - v\paren*{\mu\sub{R}}}_\infty < \epsilon\).
By Lemma~\ref{Lemma disjointified are still invariant}, each \(R_i\) is in \(\Cp_1(K,3\epsilon)\).
Since they are disjoint, any union of them is in \(\Cp_1(K,3\epsilon)\) as well.
The crucial detail is that each $|R_i|$ is at most $|T_n|$.
Let \(r_i = |R_i|/|R|\), so \(\sum r_i = 1\) and \(\mu\sub{R} = \sum r_i \cdot \mu\sub{R_i}\).

For each \(R_i\), pick \(d_i \in D\) with
	\(\norm*{d_i - v\paren*{\mu\sub{R_i}}}_\infty < \epsilon\).
For each \(d\in D\), let \(C_d = \set*{i : d_i = d}\) and \(c_d = \sum \set*{r_i : d_i = d}\).
Thus \(\norm*{v\paren*{\mu\sub{R}} - \sum_{d\in D} c_d \cdot d}_\infty < \epsilon\).
Let \(S_d \subset C_d\) be a maximal subset such that
\(s_d = \sum \set*{r_i : i \in S_d} \leq \mfrac{M}{|R|} c_d\).
By the maximality of \(S_d\),
\(0 \leq \big( c_d - \mfrac{|R|}{M} s_d\big) < \mfrac{\epsilon}{\# D}\).
Otherwise we could add another \(i\) from \(C_d\) to \(S_d\),
increasing \(\mfrac{|R|}{M} s_d\) by \(\mfrac{|R|}{M} r_i = \mfrac{|R_i|}{M}
	\leq \mfrac{|T_n|}{M} = \mfrac{\epsilon}{\# D}\).
Let \(A = \bigcup_{d\in D} \bigcup_{i \in S_d} R_i\),
so that \(\mu\sub{A} = \sum_{d\in D} \sum_{i\in S_d} \mfrac{|R|}{|A|} r_i \cdot \mu\sub{R_i}\).
Notice \(\mfrac{|A|}{M} = \sum_{d\in D} \sum_{i \in S_d} \mfrac{|R|}{M}\mfrac{|R_i|}{|R|}
	= \sum_{d\in D} \mfrac{|R|}{M}s_d \in (1-\epsilon, 1]\).
Finally, we apply the triangle inequality:
\(\bigl\|v\paren*{\mu\sub{B}} - v\paren*{\mu\sub{A}} \bigr\|_\infty
	\leq \bigl\|v\paren*{\mu\sub{B}} - v\paren*{\mu\sub{R}} \bigr\|_\infty
	+\ \bigl\|v\paren*{\mu\sub{R}} - \sum_{d\in D} c_d \cdot d \bigr\|_\infty
	+\ \norm*{\sum_{d\in D} \paren*{c_d - \mfrac{|R|}{M} s_d} \cdot d}_\infty
	+\ \norm*{\sum_{d\in D} \paren*{\mfrac{|R|}{M} - \mfrac{|R|}{|A|}} s_d \cdot d}_\infty
	+\ \norm*{ \sum_{d\in D}\sum_{i\in S_d} \mfrac{|R|}{|A|} r_i \cdot
		\paren*{d - v\paren*{\mu\sub{R_i}}} }_\infty
	< 5\epsilon.
\)
\end{proof}

%%%%%%%%%%%%%%%%%%%%%
%%%%%%%%%%%%%%%%%%%%%
%%%%%%%%%%%%%%%%%%%%%

\section{Easy cases}
\begin{Theorem}\label{Theorem when G is large}
If $\kappa(G)>\Nbb$, $\TLIM_0(G) = \TLIM(G)$.
\end{Theorem}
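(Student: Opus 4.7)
My plan is to verify the hypothesis of Lemma~\ref{Main Idea}: given \(\mu, \nu \in \TLIM_0(G)\), show \(\frac12(\mu + \nu) \in \TLIM_0(G)\). Fix a target neighborhood \(\Ncal\paren*{\frac12(\mu+\nu), \Pcal, \epsilon}\) and compact \(K\), and pick \(\delta > 0\) small (to be tuned against \(\epsilon\) at the end). I will construct a compact \((K, 2\delta)\)-invariant set \(C = A \cup B\) in which \(\mu\sub{A}\) approximates \(\mu\) on \(\Pcal\) to within \(\delta\), \(\mu\sub{B}\) approximates \(\nu\) similarly, \(|A|/|B| \in \paren*{(1-\delta)^2, (1+\delta)^2}\), and \(\mu\sub{B}(A) < \delta\). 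Lemma~\ref{Lemma Mean Approximation} will then place \(\mu\sub{C}\) within \(4\delta\) of \(\frac12(\mu + \nu)\) on \(\Pcal\), and Lemma~\ref{Lemma Invariance of Union} controls the invariance.

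Apply Lemma~\ref{Lemma many disjoint sets} to \(\mu\) with parameters \((\Pcal, K, \delta)\), obtaining a disjoint family \(\set*{A_\gamma : \gamma < \kappa(G)} \subset \Cp(K, \delta)\) with each \(\mu\sub{A_\gamma} \in \Ncal(\mu, \Pcal, \delta)\). Partition \((0,\infty)\) into the intervals \(\brac*{(1+\delta)^n, (1+\delta)^{n+1}}\), \(n\in\Zbb\). Because \(\kappa(G) > \Nbb\), an uncountable pigeonhole yields a reference measure \(M_\mu > 0\) and a subfamily of cardinality \(\kappa(G)\) with \(|A_\gamma| \in [M_\mu, (1+\delta)M_\mu)\). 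From this subfamily, fix a countable sub-selection \(\set*{A_{\gamma_i} : i \in \Nbb}\) and set \(A^* = \bigsqcup_i A_{\gamma_i}\); since \(A^*\) is \(\sigma\)-compact, \(\kappa(A^*) \leq \Nbb < \kappa(G)\), and Lemma~\ref{Lemma if Kappa S is small then measure 0} gives \(\nu(A^*) = 0\). Now apply Lemma~\ref{Lemma many disjoint sets} to \(\nu\) with the refined partition \(\Pcal \cup \set*{A^*}\), obtaining a disjoint family \(\set*{B_\gamma : \gamma < \kappa(G)} \subset \Cp(K, \delta)\) with \(\mu\sub{B_\gamma}(A^*) < \delta\); pigeonhole again to get \(M_\nu > 0\) and a \(\kappa(G)\)-sized subfamily of \(B_\gamma\)'s in \([M_\nu, (1+\delta)M_\nu)\). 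Finally, choose integers \(k_\mu, k_\nu \geq 1\) with \(k_\mu M_\mu/(k_\nu M_\nu)\) arbitrarily close to \(1\) (possible for any positive \(M_\mu, M_\nu\) by rational approximation), let \(A\) be the disjoint union of \(k_\mu\) chosen \(A_{\gamma_i}\)'s and \(B\) the disjoint union of \(k_\nu\) chosen \(B_\gamma\)'s from the pigeonholed \(\nu\)-subfamily.

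A routine check then gives \(A, B \in \Cp(K, \delta)\) (by Lemma~\ref{Lemma Invariance of Union} iterated on disjoint unions), \(\mu\sub{A} \in \Ncal(\mu, \Pcal, \delta)\) and \(\mu\sub{B} \in \Ncal(\nu, \Pcal, \delta)\) by convexity, \(|A|/|B|\) in the required window from the pigeonhole intervals and the ratio choice, and \(\mu\sub{B}(A) \leq \mu\sub{B}(A^*) < \delta\) since \(A \subset A^*\). Lemma~\ref{Lemma Mean Approximation} plus the triangle inequality then give \(\mu\sub{A\cup B} \in \Ncal(\frac12(\mu+\nu), \Pcal, 4\delta)\), while Lemma~\ref{Lemma Invariance of Union} yields \(A \cup B \in \Cp(K, 2\delta)\); taking \(\delta \leq \epsilon/4\) completes the verification of Lemma~\ref{Main Idea}'s hypothesis. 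The hard part is a small piece of circularity---matching measures wants \(A\) chosen after the scale \(M_\nu\) is known, but forcing \(B\) to avoid \(A\) wants the partition refined by \(A\) before Lemma~\ref{Lemma many disjoint sets} is applied to \(\nu\)---which the \(\sigma\)-compact, \(\nu\)-null container \(A^*\) resolves by being decided from the \(\mu\)-pigeonhole alone, before \(\nu\) is touched.
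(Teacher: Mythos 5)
Your proof is correct and follows essentially the same route as the paper: both verify the midpoint hypothesis of Lemma~\ref{Main Idea} by producing large disjoint families of approximating sets via Lemma~\ref{Lemma many disjoint sets}, pigeonholing on measure scales to force \(|A|/|B|\approx 1\), and then invoking Lemmas~\ref{Lemma Mean Approximation} and~\ref{Lemma Invariance of Union}. The only difference is cosmetic: the paper builds the \(A_\alpha\)'s and \(B_\alpha\)'s as a single mutually disjoint family (so \(\mu\sub{B}(A)=0\) exactly), whereas you run the lemma twice and arrange only approximate disjointness through the \(\sigma\)-compact, \(\nu\)-null container \(A^*\) and a refined partition --- which works just as well, modulo trivial constant-tracking.
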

\begin{proof}
Pick \(\mu,\nu\in\TLIM_0(G)\) and \((\Pcal, K, \epsilon)\).
By the same technique as in Lemma~\ref{Lemma many disjoint sets}, obtain mutually disjoint sets
\(\set*{A_\alpha, B_\alpha : \alpha < \kappa(G)} \subset \Cp(K,\epsilon)\),
such that \(\mu\sub{A_\alpha}\in \Ncal(\mu,\Pcal, \epsilon)\),
and \(\mu\sub{B_\alpha} \in \Ncal(\nu,\Pcal,\epsilon)\) for each \(\alpha\).

Let $r = (1+\epsilon)^{1/2}$, and let $I_n = [r^n, r^{n+1})$ for $n\in\Zbb$.
By the pigeonhole principle, there exist \(m,n\in\Zbb\) such that
\(\set*{\alpha : |A_\alpha| \in I_m}\) and \(\set*{\alpha : |B_\alpha| \in I_m}\) are infinite.
Pick \(M,N\in\Nbb\) so that \(M/N \in I_{m-n}\).
Pick \(A_1, \hdots, A_N\) from \(\set*{A_\alpha : |A_\alpha|\in I_m}\)
and \(B_1, \hdots, B_M\) from \(\{B_\alpha : |B_\alpha| \in I_n\}\).
Let \(A = A_1 \cup \hdots \cup A_N\) and \(B = B_1 \cup \hdots \cup B_M\).
Thus $|A| / |B| \in (1-\epsilon, 1+\epsilon)$.
Since the $A_i$'s and $B_i$'s are mutually disjoint, we have $A,B, A\cup B\in\Cp(K,\epsilon)$.
Clearly \(\mu\sub{A}\in\Ncal(\mu,\Pcal,\epsilon)\) and
\(\mu\sub{B}\in\Ncal(\nu,\Pcal, \epsilon)\).
By Lemma~\ref{Lemma Mean Approximation} and the triangle inequality,
\(\mu\sub{A\cup B} \in \Ncal\bigl(\frac12(\mu + \nu), \Pcal, 4\epsilon\bigr)\).
Since \(\epsilon\) was arbitrary, we conclude \(\frac12(\mu + \nu) \in \TLIM_0(G)\).
\end{proof}

\begin{Theorem}\label{Theorem Non-Unimodular}
If $G$ is $\sigma$-compact non-unimodular, $\TLIM_0(G) \neq \TLIM(G)$.
\end{Theorem}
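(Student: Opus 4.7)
Since $G$ is non-unimodular, the modular function $\Delta\colon G\to\Rbb^+$ is a non-trivial continuous homomorphism; choose $y\in G$ with $c=\Delta(y)>1$. The fundamental asymmetry driving the obstruction is that left translation preserves Haar measure while right translation by $y$ multiplies it by $c$, so for any $A\in\Cp$ we have $|Ay|=c|A|\neq|A|$ and in particular $\norm*{\mu\sub{A}-\mu\sub{Ay}}_1\geq 1-1/c$.

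The plan is to exhibit a specific $\mu\in\TLIM(G)\setminus\TLIM_0(G)$, by analogy with the quantization observation in the ``point'' following Theorem~\ref{Theorem weak but not strong discrete}. In that discrete situation, the values $\mu\sub{F}(\set*{x})\in\set*{0}\cup\set*{1/n:n\in\Nbb}$ prevent any $\mu\sub{F}$ from approximating a mean $\mu$ with an atom of mass $2/3$. I would seek a bounded measurable $\phi\in\Linfty$ and a target value $v\in[0,1]$ such that, for every $A\in\Cp(K,\epsilon)$ with $y\in K$ and $\epsilon$ small, $\mu\sub{A}(\phi)$ lies in a restricted set bounded away from $v$, while some $\mu\in\TLIM(G)$ achieves $\mu(\phi)=v$.

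The natural source of $\phi$ is the interaction of the $\Delta$-stratification $G=\bigsqcup_{k} H_k$ (with $H_k=\Delta^{-1}([c^k,c^{k+1}))$) with a fixed compact ``witness'' set $U\in\Cp$ of positive finite Haar measure, chosen so that $U,Uy,Uy^2,\dots$ are disjoint. Because the F\o{}lner condition forces $\mu\sub{A}$ to be approximately invariant under left multiplication by $y$, and because right translation by $y$ rescales measure by $c$, the joint values $\bigl(\mu\sub{A}(\chi\sub{U}),\mu\sub{A}(\chi\sub{Uy}),\dots\bigr)$ satisfy a rigid quantitative relation that depends only on $c$ and the combinatorics of $|A\cap Uy^k|$; distilling this into a single test function $\phi$ (some finite linear combination of $\chi\sub{Uy^k}$ scaled by appropriate powers of $c$) should yield the discrete set of attainable values. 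Then, using $\sigma$-compactness to exhaust $G$ by a sequence of compacta and the amenability of $G$ to produce TLIMs with prescribed first coordinates, construct $\mu\in\TLIM(G)$ with $\mu(\phi)=v$ outside that set, mirroring the construction of the atomic $\mu$ in the discrete ``point.''

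The main obstacle is step~two: extracting from F\o{}lner-invariance plus the modular scaling the precise discrete (or otherwise quantized) constraint on $\mu\sub{A}(\phi)$ in a single-test-function form. This is the non-trivial heart of the theorem. It is also the step that genuinely uses all three hypotheses: non-unimodularity provides the right-translation scaling by $c$; $\sigma$-compactness forces the indexing set of F\o{}lner nets to be essentially countable, preventing the cardinality trick of Theorem~\ref{Theorem when G is large}; and noncompactness ensures $|A|\to\infty$ along any F\o{}lner net, stripping away the degrees of freedom used in Theorem~\ref{Theorem Hindman and Strauss Technique} to equalize measures of approximants.
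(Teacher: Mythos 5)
Your proposal correctly identifies the modular function as the engine of the obstruction and the ``quantization'' heuristic from the discrete example as the right analogy, but it stops exactly where the proof has to begin (you say so yourself about step two), and two of its concrete suggestions would fail. First, a test function $\phi$ that is a finite linear combination of indicators $\chi\sub{Uy^k}$ of compact sets cannot separate anything: since $G$ is noncompact (compact groups are unimodular), Lemma~\ref{Lemma if Kappa S is small then measure 0} forces every $\mu\in\TLIM(G)$ to vanish on compact sets, so $\mu(\phi)=0$ for \emph{every} topological left-invariant mean, and $0$ is trivially attainable by F\o{}lner sets pushed off $\supp\phi$ by right translation; there is no target value $v$ to aim for. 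The witness sets must be non-compact. Second, your closing claim that noncompactness forces $|A|\to\infty$ along F\o{}lner nets is false precisely in the non-unimodular case, and its failure is the whole point: if $A\in\Cp(K,\epsilon)$ then so is $Ax$ for every $x\in G$, while $|Ax|=|A|\Delta(x)$ can be made arbitrarily small.

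The paper's construction exploits this directly. Exhaust $G$ by compacta $K_n$, pick $F_n\in\Cp\bigl(K_n,\frac1n\bigr)$, and right-translate: choose $x_n$ with $F_nx_n\subset\set*{\Delta\geq 1}$ and $y_n$ with $F_ny_n\subset\set*{\Delta\leq\min(1,|F_n|^{-1}2^{-n})}$. Then $X=\bigcup_n F_nx_n$ and $Y=\bigcup_n F_ny_n$ are disjoint, $|Y|\leq 1$, and accumulation points $\mu$ of $\net*{\mu\sub{F_nx_n}}$ and $\nu$ of $\net*{\mu\sub{F_ny_n}}$ lie in $\TLIM_0(G)$ with $\mu(X)=\nu(Y)=1$. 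Put $m=\frac12(\mu+\nu)$, so $m(X)=m(Y)=\frac12$. Any $A\in\Cp_1\bigl(K,\frac16\bigr)$ with $\mu\sub{A}\in\Ncal\bigl(m,\set{X,Y},\frac16\bigr)$ and $|K|\geq\frac72$ must satisfy $|A|<3$, because $\frac13<\mu\sub{A}(Y)\leq|Y|/|A|\leq 1/|A|$; and also $|A|>3$, because $A$ meets $X$ at some $a$ with $\Delta(a)\geq1$, whence $\frac72\leq|K|\leq|Ka|\leq|KA|<\bigl(1+\frac16\bigr)|A|$. This contradiction shows $m\in\TLIM(G)\setminus\TLIM_0(G)$. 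In effect the ``single test function'' you were seeking is the pair $\set{X,Y}$, and the quantized constraint is the elementary bound $\mu\sub{A}(Y)\leq|Y|/|A|$ played against the lower bound on $|A|$ that the F\o{}lner condition imposes on the part of $A$ lying in $X$; your proposal does not supply a mechanism producing either bound.
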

\begin{proof}
Recall that the modular function \(\Delta\) is defined by \(|Ex| = |E|\Delta(x)\).

Let \(K_1 \subset K_2 \subset \hdots\) be a sequence of compacta with \(\bigcup_n K_n = G\).
For each $n$, pick \(F_n \in \Cp\big(K_n, \frac1n\big)\),
then pick $x_n, y_n\in G$ such that $F_n x_n \subset \{x : \Delta(x) \geq 1\}$
and \(F_n y_n \subset \set*{x : \Delta(x) \leq \min\paren*{1, |F_n|^{-1} 2^{-n}}}\).
Let $X = \bigcup_n F_n x_n$ and $Y = \bigcup_n F_n y_n$.
By construction, \(X\cap Y = \varnothing\) and  \(|Y| \leq 1\).
Let $\mu$ be an accumulation point of $\seq*{\mu\sub{F_n x_n}}$
and $\nu$ an accumulation point of $\seq*{\mu\sub{F_n y_n}}$.
Thus $\mu,\nu\in \TLIM_0(G)$.
Let $m = \frac12(\mu + \nu)$.
Notice $\mu(X) = \nu(Y) = 1$ and $\mu(Y) = \nu(X) = 0$, hence $m(X) = m(Y) = \mfrac12$.

Suppose \(m\in \TLIM_0(G)\).
Let $K$ be any compact set with $|K| \geq \frac72$.
Define \(\epsilon = \frac16\) and \(\Pcal = \{X, Y\}\).
Pick $A\in\Cp_1(K, \epsilon)$ with $\mu\sub{A} \in \Ncal(m, \Pcal, \epsilon)$.
Now \(\mfrac{1}{|A|}
	\geq \mfrac{|Y|}{|A|}
	\geq \mfrac{|Y\cap A|}{|A|}
	= \mu\sub{A}(Y) > m(Y) - \epsilon = \frac13\), so \(|A| < 3\).
On the other hand, $\mu\sub{A}(X) > \frac13$,
so $|A \cap X| \neq 0$. Say $a \in A \cap X$.
Now $\frac72 \leq |K| \leq |Ka| \leq |KA| \leq |A| + |KA \setminus A| < (1+\epsilon)|A|$,
	so $3 < |A|$, a contradiction.
Hence $m\in\TLIM(G) \setminus \TLIM_0(G)$.
\end{proof}
%%%%%%%%%%%%%%%%%%%%%
%%%%%%%%%%%%%%%%%%%%%
%%%%%%%%%%%%%%%%%%%%%
\section{Non-topological invariant means}
In this section, triples of the form \((\Pcal, K, \epsilon)\) range over
	finite measurable partitions \(\Pcal\) of \(G\),
	finite sets \(K\subset G\),
	and \(\epsilon \in (0,1)\).
In these terms, we define
\begin{center}
\(\LIM_0(G) = \set*{
	\mu\in\LIM(G) : \big(\forall \paren*{\Pcal, K, \epsilon}\big)\ 
	\big(\exists A\in\Cp(K,\epsilon)\big)\ 
	\mu\sub{A} \in \Ncal(\mu, \Pcal, \epsilon)}\).
\end{center}

\begin{Lemma}[{\cite[Theorem 2.12]{HindmanDensity}}]\label{Lemma LIM0 is total in LIM}
The closed convex hull of $\LIM_0(G)$ is all of $\LIM(G)$.
\end{Lemma}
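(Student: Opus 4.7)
The inclusion $\overline{\conv}\LIM_0(G) \subseteq \LIM(G)$ is immediate since $\LIM(G)$ is convex and $w^*$-closed. For the reverse, I plan a Hahn--Banach argument in the spirit of Hindman and Strauss. Suppose for contradiction that some $\mu \in \LIM(G)$ lies outside $C := \overline{\conv}\LIM_0(G)$; separate via $\phi \in \Linfty$ and reals $\alpha < \beta$ so that $\nu(\phi) \leq \alpha$ for every $\nu \in \LIM_0(G)$ while $\mu(\phi) \geq \beta$.

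The first step is a uniform bound: I claim that for every $\delta > 0$ there is a finite $K_0 \subset G$ and $\epsilon_0 > 0$ so that $\mu\sub{F}(\phi) \leq \alpha + \delta$ for every $F \in \Cp(K_0, \epsilon_0)$. Otherwise a standard diagonalization builds a F\o{}lner-like net $\set*{F_\lambda}$ with $\mu\sub{F_\lambda}(\phi) > \alpha + \delta$ and $F_\lambda \in \Cp(K,\epsilon)$ eventually for each $(K,\epsilon)$; any $w^*$-cluster point of $\set*{\mu\sub{F_\lambda}}$ lies in $\LIM_0(G)$ and violates the separating inequality.

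The second step is to show that $\mu$ itself is the $w^*$-limit of finite convex combinations of $\mu\sub{B}$'s with $B \in \Cp(K_0, \epsilon_0)$. Using amenability in Day's form (or the strong-invariance statement of \cite[Theorem 2.4.2]{greenleaf}), choose a net $\set*{f_\lambda} \subset \Mcal_1(G)$ of compactly supported densities with $f_\lambda \to \mu$ in $w^*$ and $\norm*{f_\lambda - l_x f_\lambda}_1 \to 0$ for every $x \in G$. Setting $A_t^\lambda = \set*{y : f_\lambda(y) > t}$, the layer-cake formula $f_\lambda = \int_0^\infty \chi\sub{A_t^\lambda} \dd t$ together with the dual identity
\[
\norm*{f_\lambda - l_x f_\lambda}_1 = \int_0^\infty \abs*{A_t^\lambda \Sdiff x A_t^\lambda} \dd t
\]
and Markov's inequality shows that, for $\lambda$ sufficiently large, all but a vanishing fraction of the layer-measure $\abs*{A_t^\lambda}\dd t$ consists of level sets lying in $\Cp(K_0, \epsilon_0/2)$. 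Inner-approximating each by a compact subset of almost full measure and Riemann-discretizing the layer-cake then produces a finite convex combination $\sigma_\lambda = \sum_i c_i^\lambda \mu\sub{B_i^\lambda}$ with $B_i^\lambda \in \Cp(K_0, \epsilon_0)$ and $\sigma_\lambda(\phi) \to \mu(\phi)$.

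Term-by-term the first step gives $\sigma_\lambda(\phi) \leq \alpha + \delta$, so $\mu(\phi) \leq \alpha + \delta$; letting $\delta \searrow 0$ forces $\mu(\phi) \leq \alpha < \beta$, the desired contradiction. The main obstacle is the bookkeeping in the second step: in the original discrete setting of Hindman and Strauss, level sets are automatically finite and the decomposition of $f_\lambda$ into a convex combination of $\mu\sub{B}$'s is essentially tautological, but in the locally compact setting one must verify that compact inner-approximations preserve $(K_0, \epsilon_0)$-invariance within a controlled slack, and that the Riemann discretization preserves both the normalization and the closeness to $\mu$ on $\phi$.
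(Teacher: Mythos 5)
You should note at the outset that the paper does not actually prove this lemma; it only cites \cite[Theorem 2.12]{HindmanDensity}, which is stated for discrete semigroups. Your proposal is, in substance, a correct reconstruction of the argument behind that citation, adapted to the locally compact setting: Hahn--Banach separation by some $\phi\in\Linfty$, a diagonalization showing that sufficiently invariant sets $F$ satisfy $\mu\sub{F}(\phi)\leq\alpha+\delta$ uniformly (any violation would produce a cluster point in $\LIM_0(G)$ on the wrong side of the separating hyperplane), and the Namioka layer-cake decomposition of almost-invariant densities into convex combinations of $\mu\sub{B}$'s with $B$ almost invariant. The skeleton is sound and I see no gap. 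Two points deserve to be made explicit rather than waved at. First, \cite[Theorem 2.4.2]{greenleaf} by itself only yields \emph{some} net converging strongly to invariance, not one converging to the prescribed $\mu$; you need the Day--Mazur convexification of a net in $\Mcal_1(G)$ that $w^*$-converges to $\mu$, using that the basic $w^*$-neighborhoods $\Ncal(\mu,\Fcal,\epsilon)$ are convex so that the convex combinations produced by Mazur's theorem stay inside them. Second, the Riemann discretization in your second step is dispensable: for the scalar contradiction it suffices to write $f_\lambda(\phi)=\int_0^\infty\inner*{\chi\sub{A_t^\lambda},\phi}\dd t=\int_0^\infty\abs*{A_t^\lambda}\,\mu\sub{A_t^\lambda}(\phi)\dd t$, discard the bad levels (whose layer mass $\int\abs*{A_t^\lambda}\dd t$ is small by your Markov estimate), and bound $\mu\sub{A_t^\lambda}(\phi)\leq\alpha+\delta+o(1)$ on the good levels via a compact inner approximation $B_t\subset A_t^\lambda$ with $\norm*{\mu\sub{A_t^\lambda}-\mu\sub{B_t}}_1$ small; this avoids any uniformity or measurable-selection issues in choosing the $B_i^\lambda$. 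With those two clarifications your bookkeeping in the locally compact case (left translation preserves Haar measure, so inner approximation degrades $(K_0,\epsilon_0)$-invariance only by a controlled factor) goes through as you describe.
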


\begin{Theorem}\label{Theorem LIM when G is large}
If $\kappa(G) > \Nbb$, then $\LIM_0(G) = \LIM(G)$.
\end{Theorem}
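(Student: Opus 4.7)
The plan is to mirror the proof of Theorem~\ref{Theorem when G is large} in the non-topological setting, where $K$ now ranges over finite subsets of $G$ rather than compact ones. Two preliminary observations make this possible. First, $\LIM_0(G)$ is closed: if $\mu_k \to \mu$ with each $\mu_k \in \LIM_0(G)$ and $(\Pcal, K, \epsilon)$ is given, pick $k$ so that $\mu_k \in \Ncal(\mu, \Pcal, \epsilon/2)$, then pick $A \in \Cp(K, \epsilon)$ with $\mu\sub{A} \in \Ncal(\mu_k, \Pcal, \epsilon/2)$; the triangle inequality gives $\mu\sub{A} \in \Ncal(\mu, \Pcal, \epsilon)$. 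Together with Lemma~\ref{Lemma LIM0 is total in LIM}, this reduces the theorem to showing $\LIM_0(G)$ is convex, and by a dyadic-density argument (as in Lemma~\ref{Main Idea}) it suffices to show $\LIM_0(G)$ is closed under taking midpoints.

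Second, I need an $\LIM_0$-version of Lemma~\ref{Lemma many disjoint sets}: given $(\Pcal, K, \epsilon)$ and $\mu \in \LIM_0(G)$, there exists a mutually disjoint family $\{A_\alpha : \alpha < \kappa(G)\} \subset \Cp(K, \epsilon)$ with each $\mu\sub{A_\alpha} \in \Ncal(\mu, \Pcal, \epsilon)$. The proof of Lemma~\ref{Lemma many disjoint sets} transcribes \emph{verbatim}: its transfinite induction never uses the compactness of $K$, only that the relevant witnesses $A \in \Cp(K, \delta)$ with $\mu\sub{A} \in \Ncal(\mu, \Pcal_B, \delta)$ exist, which the definition of $\LIM_0(G)$ supplies for finite $K$; and Lemma~\ref{Lemma if Kappa S is small then measure 0} applies to every element of $\LIM(G) \supset \LIM_0(G)$.

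With these in hand, fix $\mu, \nu \in \LIM_0(G)$ and $(\Pcal, K, \epsilon)$. Obtain mutually disjoint $\{A_\alpha, B_\alpha : \alpha < \kappa(G)\} \subset \Cp(K, \epsilon)$ with $\mu\sub{A_\alpha} \in \Ncal(\mu, \Pcal, \epsilon)$ and $\mu\sub{B_\alpha} \in \Ncal(\nu, \Pcal, \epsilon)$. Setting $r = (1+\epsilon)^{1/2}$ and $I_n = [r^n, r^{n+1})$, the pigeonhole principle (using $\kappa(G) > \Nbb$ against the countable partition $\{I_n\}$ of $(0, \infty)$) produces $m, n \in \Zbb$ such that both $\{\alpha : |A_\alpha| \in I_m\}$ and $\{\alpha : |B_\alpha| \in I_n\}$ are infinite. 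The rest of the proof of Theorem~\ref{Theorem when G is large} now runs word-for-word: choose integers $M, N$ with $M/N \in I_{m-n}$, form disjoint unions $A = A_1 \cup \hdots \cup A_N$ and $B = B_1 \cup \hdots \cup B_M$ so that $|A|/|B| \in (1-\epsilon, 1+\epsilon)$, observe $A, B, A \cup B \in \Cp(K, \epsilon)$ by disjointness, and conclude via Lemma~\ref{Lemma Mean Approximation} and the triangle inequality that $\mu\sub{A \cup B} \in \Ncal\bigl(\tfrac{1}{2}(\mu + \nu), \Pcal, 4\epsilon\bigr)$. I anticipate no real obstacle: the only point that must be checked is that the transfinite induction of Lemma~\ref{Lemma many disjoint sets} is insensitive to whether $K$ is finite or compact, which it is.
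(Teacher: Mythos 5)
Your proposal is correct and follows essentially the same route as the paper, whose proof of this theorem is literally ``identical to that of Theorem~\ref{Theorem when G is large}, but with Lemma~\ref{Lemma LIM0 is total in LIM} in place of Lemma~\ref{Lemma TLIM0 is total in TLIM}.'' You merely spell out the details the paper leaves implicit --- chiefly that the transfinite induction of Lemma~\ref{Lemma many disjoint sets} is insensitive to whether $K$ is finite or compact --- and those checks are accurate.
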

\begin{proof}
The proof is identical to that of Theorem~\ref{Theorem when G is large},
	but with Lemma~\ref{Lemma LIM0 is total in LIM}
	in place of Lemma~\ref{Lemma TLIM0 is total in TLIM}.
\end{proof}

\begin{point}\label{Refinement}
Fix a finite measurable partition \(\Pcal = \set*{E_1, \hdots, E_p}\),
	and a finite set \(X = \{x_1, \hdots, x_n\}\), with \(x_1 = e\).
For \(C = (c_1, \hdots, c_n) \in \{1, \hdots, p\}^n\), define
	\(E(C) = \set*{y \in G : x_1 y \in E_{c_1}, \hdots, x_n y \in E_{c_n}}
		= \bigcap_{k=1}^n x_k^{-1} E_{c_k}\).
Thus \(\Qcal = \set*{E(C) : C\in \{1,\hdots,p\}^n}\) is a refinement of \(\Pcal\).
Notice \(E_i = \bigcup\{E(C) : c_1 = i\}\)
and \(x_k^{-1} E_i = \bigcup\{E(C) : c_k = i\}\).

The idea of refining $\Pcal$ this way is due to \cite{WeakNotStrong}.
\end{point}

\begin{Theorem}\label{Theorem LIM0}
If $G$ is non-discrete but amenable-as-discrete, then $\LIM_0(G) = \LIM(G)$.
\end{Theorem}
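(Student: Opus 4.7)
The strategy is to apply Lemma~\ref{Main Idea} in the non-topological setting (with Lemma~\ref{Lemma LIM0 is total in LIM} in place of Lemma~\ref{Lemma TLIM0 is total in TLIM}): it then suffices to show $\mu,\nu\in\LIM_0(G)\Rightarrow\tfrac12(\mu+\nu)\in\LIM_0(G)$. Given such $\mu,\nu$ and $(\Pcal,K,\epsilon)$, I aim to produce disjoint $A,B\in\Cp(K,\delta)$ with $|A|/|B|\in((1-\delta)^2,(1+\delta)^2)$, $\mu_A\in\Ncal(\mu,\Pcal,\delta)$, and $\mu_B\in\Ncal(\nu,\Pcal,\delta)$, where $\delta=\epsilon/10$. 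Then $A\cup B$ realizes $\tfrac12(\mu+\nu)$ on $\Pcal$ via Lemmas~\ref{Lemma Mean Approximation} and~\ref{Lemma Invariance of Union}, exactly as in the proof of Theorem~\ref{Theorem Hindman and Strauss Technique}.

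The F\o{}lner sets produced will have the $XS$-form predicted by Question~\ref{Question non topological Folner net}. The key observation: for $S\in\Cp$ and finite $X\subset G$ such that the translates $\{xS:x\in X\cup KX\}$ are mutually disjoint, the set $A=XS=\bigsqcup_{x\in X}xS$ satisfies $|kA\Sdiff A|/|A|=|kX\Sdiff X|/|X|$ for each $k\in K$. Hence if $X$ is a $(K,\delta)$-F\o{}lner set in $G_d$ (which exists by the amenable-as-discrete hypothesis), then $A\in\Cp(K,\delta)$ in $G$. Further, if $\mu_S\in\Ncal(\mu,\Qcal,\delta)$ where $\Qcal$ is the refinement of $\Pcal$ by $X\cup\{e\}$ in the sense of point~\ref{Refinement}, then by the left-invariance of $\mu$,
\[ \mu_A(E_i)=\frac{1}{|X|}\sum_{x\in X}\frac{|S\cap x^{-1}E_i|}{|S|}\approx\frac{1}{|X|}\sum_{x\in X}\mu(x^{-1}E_i)=\mu(E_i). \]

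Implementation: first pick a $(K,\delta)$-F\o{}lner set $F\subset G_d$; next refine $\Pcal$ using $F\cup K\cup\{e\}$ (augmented as needed below) to obtain $\Qcal$; then use $\mu,\nu\in\LIM_0(G)$ to pick compact $S_\mu,S_\nu\in\Cp(K,\delta)$ with $\mu_{S_\mu}\in\Ncal(\mu,\Qcal,\delta)$ and $\mu_{S_\nu}\in\Ncal(\nu,\Qcal,\delta)$. Then build finite $(K,\delta)$-F\o{}lner sets $T,T'\subset G_d$ (assembled as disjoint unions of translates of $F$) of sizes $N|F|,M|F|$ chosen so that $N|F||S_\mu|\approx M|F||S_\nu|$, arranged so that the family $\{ktS_\mu\}_{k\in K\cup\{e\},t\in T}\cup\{kt'S_\nu\}_{k\in K\cup\{e\},t'\in T'}$ is mutually disjoint. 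Set $A=TS_\mu$ and $B=T'S_\nu$, and apply the key observation to both.

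The hard part will be the simultaneous coordination of $T,T'$: they must be $(K,\delta)$-F\o{}lner in $G_d$, of prescribed sizes, expressible within the finite set used to define $\Qcal$ (so the refinement argument applies), and positioned so that the listed translates avoid the compact constraints $S_\mu S_\mu^{-1}$, $S_\nu S_\nu^{-1}$, $S_\mu S_\nu^{-1}$, and their $K$-conjugates. Since $G$ is non-compact (the compact case of Question~\ref{Question LIM0 = LIM} is excluded), each such compact set admits infinitely many mutually disjoint left-translates in $G_d$, hence has $\lambda$-measure zero for any $\lambda\in\LIM(G_d)$; so $G_d$-F\o{}lner substructures of prescribed large size can be chosen to avoid them in a density sense, with the sizes balanced by concatenating several disjoint copies. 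The proof requires $G$ non-compact and uses amenable-as-discrete essentially; non-discreteness is needed only to place this theorem outside the already-settled discrete case (Theorem~\ref{Theorem Unimodular Case}).
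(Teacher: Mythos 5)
Your plan diverges from the paper's: the paper does not route Theorem~\ref{Theorem LIM0} through the convexity machinery of Lemma~\ref{Main Idea} at all. It proves the containment $\LIM(G)\subset\LIM_0(G)$ directly --- for an \emph{arbitrary} $\mu\in\LIM(G)$ and any $(\Pcal,K,\epsilon)$ it builds a set $XS\in\Cp(K,\epsilon)$ with $\mu\sub{XS}(E_i)=\mu(E_i)$ exactly, where $X$ is a finite $(K,\epsilon)$-F\o{}lner set of the discretized group and $S$ is assembled from \emph{very small} pieces supplied by Lemma~\ref{Lemma Very Small S}, one inside each positive-measure cell of the refinement $\Qcal$, with measures prescribed proportionally to $\mu$. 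The smallness of $S$ (specifically $SS^{-1}\cap X^{-1}X=\{e\}$) is what makes $XS=\bigsqcup_k x_kS$ a genuinely disjoint union. This difference matters, because your version has a genuine gap at exactly this point.

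The gap: you harvest $S_\mu$ from the hypothesis $\mu\in\LIM_0(G)$, so $S_\mu\in\Cp(K,\delta)$ is some compact $(K,\delta)$-invariant set over which you have no size control, and you then need the family $\set*{ktS_\mu : k\in K\cup\{e\},\ t\in T}$ to be mutually disjoint for a finite $(K,\delta)$-F\o{}lner set $T$ of $G_d$. These two requirements are incompatible. Disjointness of $tS_\mu$ and $t'S_\mu$ means $t^{-1}t'\notin S_\mu S_\mu^{-1}$, but the F\o{}lner property of $T$ forces $kT\cap T\neq\varnothing$ for each $k\in K$, i.e.\ $T$ contains pairs $t,\ t'=kt$ with $t^{-1}t'=t^{-1}kt$. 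Concretely, take $G=\Rbb$ and $K=\{1\}$: any $S_\mu\in\Cp(\{1\},\delta)$ with $\delta<2$ satisfies $(1+S_\mu)\cap S_\mu\neq\varnothing$, hence $1\in S_\mu-S_\mu$, hence $tS_\mu\cap(t+1)S_\mu\neq\varnothing$ for every $t$; and every $(\{1\},\delta)$-F\o{}lner subset of $\Rbb_d$ contains adjacent pairs $t,t+1$. No positioning of $T$ ``in a density sense'' can evade this, since the obstruction lives inside $T$ itself, not in where $T$ sits. (A secondary issue: you restrict to noncompact $G$, but the theorem also covers compact non-discrete groups that are amenable-as-discrete, and the paper's argument handles them.) The repair is precisely the paper's move: do not take $S$ from the $\LIM_0$ hypothesis; instead use non-discreteness via Lemma~\ref{Lemma Very Small S} to build $S$ with $SS^{-1}\cap X^{-1}X=\{e\}$ and with $\mu\sub{S}$ agreeing with $\mu$ on $\Qcal$, which needs only that $\mu$ is a left-invariant mean. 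Your $XS$-shape and the refinement computation $\mu\sub{x_kS}(E_i)=\mu(x_k^{-1}E_i)=\mu(E_i)$ are the right ingredients; it is the sourcing of $S$ that must change, and once it does, the convexity detour becomes unnecessary and the full statement follows for every $\mu\in\LIM(G)$ at once.
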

\begin{proof}
Pick \((\Pcal, K, \epsilon)\) and \(\mu\in\LIM(G)\).
Let $X = \{x_1, \hdots, x_n\}$ be $(K, \epsilon)$-invariant, with $x_1 = e$.
Let $\Qcal$ refine $\Pcal$ as in \ref{Refinement},
and let \(\set{F_1, \hdots, F_q} = \set{F\in \Qcal : |F| > 0}\).
Let $m=\min\{1, |F_1|, \hdots, |F_q|\}$ and $c = m / (n^2 q)$.
By Lemma~\ref{Lemma Very Small S}, pick $S_1 \subset F_1$
such that $0 <|S_1| \leq c$ and $S_1 S_1^{-1} \cap X^{-1} X = \{e\}$.
For \(k < q\), inductively choose
\(S_{k+1} \subset F_{k+1} \setminus X^{-1} X (S_1 \cup \hdots \cup S_k)\)
with \(0<|S_{k+1}| \leq c\) and \(S_{k+1} S_{k+1}^{-1} \cap X^{-1} X = \{e\}\).
This is possible, since
\(|F_{k+1} \setminus X^{-1} X (S_1 \cup \hdots \cup S_k)| \geq m - n^2kc > 0\).

Now let \(m' = \min\set*{|S_1|, |S_2|, \hdots, |S_q|}\).
For each $i$, choose $S_i' \subset S_i$ with \(|S_i'| = m'\cdot \mu(E_i)\),
then let $S = S_1' \cup \hdots \cup S_q'$.
By construction, $XS = x_1 S \sqcup \hdots \sqcup x_n S$.
For $y\in K$, $|yXS \Sdiff XS| / |XS| = \#(yX \Sdiff X) / \# (X) < \epsilon$,
	hence \(XS \in \Cp(K, \epsilon)\).
Notice $\mu\sub{XS} = \frac{1}{n} \sum_{k=1}^n \mu\sub{x_k S}$.
For each $i$ and $k$, $\mu\sub{x_k S}(E_i)
	= \mu\sub{S}(x_k^{-1} E_i)
	= \sum \set*{\mu\sub{S}(E(C)) : c_k = i}
	= \sum \set*{\mu(E(C)) : c_k = i}
	= \mu(x_k^{-1} E_i) = \mu (E_i)$.
Hence $\mu\sub{XS}(E_i) = \mu(E_i)$.
\end{proof}
%%%%%%%%%%%%%%%%%%%%%
%%%%%%%%%%%%%%%%%%%%%
%%%%%%%%%%%%%%%%%%%%%
\bibliographystyle{alpha}
\bibliography{TLIM=TLIM0}

\begin{thebibliography}{Eme68}

\bibitem[Cho70]{Chou70}
Ching Chou.
\newblock On topologically invariant means on a locally compact group.
\newblock {\em Trans. Amer. Math. Soc.}, 151(2):443--456, 1970.

\bibitem[Eme68]{EmersionRatio}
William~R. Emerson.
\newblock Ratio properties in locally compact amenable groups.
\newblock {\em Trans. Amer. Math. Soc.}, 133(1):179--204, 1968.

\bibitem[Gre69]{greenleaf}
Frederick~P. Greenleaf.
\newblock {\em Invariant Means on Topological Groups and Their Applications}.
\newblock Van Nostrand, 1969.

\bibitem[Hop20]{hopfensperger2020}
John Hopfensperger.
\newblock {Counting topologically invariant means on $L_\infty(G)$ and $VN(G)$
  with ultrafilters}.
\newblock {\em Rocky Mt. J. Math.}, 2020.
\newblock Advance publication.

\bibitem[HS09]{HindmanDensity}
Neil Hindman and Dona Strauss.
\newblock {Density and invariant means in left amenable semigroups}.
\newblock {\em Topol. Its Appl.}, 156:2614--2628, 2009.

\bibitem[Mil81]{milnes}
Paul Milnes.
\newblock {Amenable groups for which every topologically left invariant mean is
  right invariant}.
\newblock {\em Rocky Mt. J. Math.}, 1(2):261--266, 1981.

\bibitem[OW87]{OW87}
Donald~S. Ornstein and Benjamin Weiss.
\newblock Entropy and isomorphism theorems for actions of amenable groups.
\newblock {\em J. Anal. Math.}, 48(1):1--141, Dec 1987.

\bibitem[Ros76]{Rosenblatt76}
J.~Rosenblatt.
\newblock {Invariant Means for the Bounded Measurable Functions on a
  Non-discrete Locally Compact Group}.
\newblock {\em Math. Ann.}, 220:219--228, 1976.

\bibitem[RW01]{WeakNotStrong}
Joseph~M. Rosenblatt and George~A. Willis.
\newblock Weak convergence is not strong convergence for amenable groups.
\newblock {\em Canad. Math. Bull.}, 44(2):231--241, 2001.

\end{thebibliography}
\end{document}